\documentclass[11pt,reqno]{amsart}

\usepackage{fixltx2e} 
\usepackage[utf8]{inputenc} 
\usepackage{amssymb, latexsym, stmaryrd, amsthm, dsfont, amsfonts, amsbsy, amsmath, mathrsfs} 
\usepackage{mathtools} 
\usepackage{bm, bbm} 
\usepackage{enumerate} 
\usepackage{verbatim} 
\usepackage{url}   
\usepackage{lscape} 
\usepackage{centernot}
\usepackage[dvipsnames]{xcolor}
\usepackage[colorinlistoftodos]{todonotes} 
\usepackage{nicefrac} 
\usetikzlibrary{calc}

\usepackage{todonotes}

\usepackage{microtype,tikz,tikz-cd}  
\makeatletter 
\def\MT@register@subst@font{\MT@exp@one@n\MT@in@clist\font@name\MT@font@list
 \ifMT@inlist@\else\xdef\MT@font@list{\MT@font@list\font@name,}\fi}
\makeatother

\makeatletter 
\newcommand{\myitem}[1]{%
\item[(#1)]\protected@edef\@currentlabel{#1}%
}
\makeatother

\usepackage[pdftex,bookmarks,bookmarksnumbered,linktocpage,   
         colorlinks,linkcolor=blue,citecolor=blue]{hyperref}

\usepackage[capitalize,noabbrev]{cleveref} 

\newcommand{\bit}{\begin{itemize}}    
\newcommand{\eit}{\end{itemize}}
\newcommand{\ben}{\begin{enumerate}}
\newcommand{\een}{\end{enumerate}}

\newcommand{\benroman}{\ben[\normalfont (i)]}  
\let\eroman\een

\newcommand{\bde}{\begin{description}}
\newcommand{\ede}{\end{description}}
\let\oper=\mathbb                               

\newcommand{\III}{\oper{I}}                     
\newcommand{\SSS}{\oper{S}}                     
\newcommand{\VVV}{\oper{V}}                     

\theoremstyle{theorem}
\newtheorem{Theorem}{Theorem}[section]

\newtheorem{Proposition}[Theorem]{Proposition}

\newtheorem{Corollary}[Theorem]{Corollary}
\newtheorem{Claim}[Theorem]{Claim}

\theoremstyle{definition}
\newtheorem{Definition}[Theorem]{Definition}

\theoremstyle{remark}

\newtheorem{Remark}[Theorem]{Remark}

\crefname{Theorem}{Theorem}{Theorems}
\crefname{Proposition}{Proposition}{Propositions}
\crefname{Lemma}{Lemma}{Lemmas}
\crefname{Corollary}{Corollary}{Corollaries}
\crefname{Claim}{Claim}{Claims}
\crefname{Definition}{Definition}{Definitions}
\crefname{exa}{Example}{Examples}
\crefname{Remark}{Remark}{Remarks}
\crefname{Fact}{Fact}{Facts}
\crefname{exer}{Exercise}{Exercises}
\crefname{problem}{Problem}{Problems}

\AddToHook{env/Proposition/begin}{\crefalias{Theorem}{Proposition}}
\AddToHook{env/Lemma/begin}{\crefalias{Theorem}{Lemma}}
\AddToHook{env/Corollary/begin}{\crefalias{Theorem}{Corollary}}
\AddToHook{env/Claim/begin}{\crefalias{Theorem}{Claim}}
\AddToHook{env/Definition/begin}{\crefalias{Theorem}{Definition}}
\AddToHook{env/exa/begin}{\crefalias{Theorem}{exa}}
\AddToHook{env/Remark/begin}{\crefalias{Theorem}{Remark}}

\let\leq=\leqslant

\let\geq=\geqslant 

\usepackage[mathscr]{euscript}
 \let\mathscr\relax 
\usepackage[scr]{rsfso}

\renewcommand{\int}{\mathsf{int}\,}

\bmdefine{\A}{A} 
\bmdefine{\C}{C}                                
                              
\bmdefine{\2}{2}
\bmdefine{\B}{B}
\bmdefine{\D}{D}
\bmdefine{\E}{E}

\bmdefine{\Term}{T} 
\bmdefine{\Free}{F}
\bmdefine{\Fb}{F}

\usepackage{scalerel}

\newcommand{\K}{\mathsf{K}}
\newcommand{\M}{\mathsf{M}}

\newcommand{\HHH}{\mathbb{H}}
\newcommand{\PPP}{\mathbb{P}}
\newcommand{\QQQ}{\mathbb{Q}}

\newcommand{\PPU}{\mathbb{P}_{\!\textsc{\textup{u}}}^{}}

\newcommand{\ext}{\mathsf{ext}}
\newcommand{\extpp}{\mathsf{ext}_{\textsc{pp}}}

\renewcommand{\L}{\mathscr{L}}

\newcommand{\F}{\mathcal{F}}

\newcommand{\Con}{\mathsf{Con}}

\DeclareUnicodeCharacter{001B}{}

\subjclass[2020]{03C05, 18A20, 12F99, 13A99}

\keywords{Reduced commutative ring, field, epimorphism surjectivity, regular monomorphism, amalgamation, dominion, zigzag theorem, discriminator variety, algebraic closure, Beth companion, implicit operation, implicitly closed meadow, implicitly closed field, weak inverse, weak prime root.}

\begin{document}

\title{A completion of reduced commutative rings}

\author{Luca Carai, Miriam Kurtzhals, and Tommaso Moraschini}

\address{Luca Carai: Dipartimento di Matematica ``Federigo Enriques'', Universit\`a degli Studi di Milano, via Cesare Saldini 50, 20133 Milano, Italy}\email{luca.carai@unimi.it}

\address{Miriam Kurtzhals and Tommaso Moraschini: Departament de Filosofia, Facultat de Filosofia, Universitat de Barcelona (UB), Carrer Montalegre, $6$, $08001$ Barcelona, Spain}
\email{mkurtzku7@alumnes.ub.edu\\ tommaso.moraschini@ub.edu}

\date{\today}

\begin{abstract}
A commutative ring is \emph{reduced} when it can be embedded into a direct product of fields. While the category of reduced commutative rings plays a fundamental role in affine geometry, it exhibits several structural deficiencies:
it admits nonregular monomorphisms and epimorphisms, lacks amalgamation, and is not equationally axiomatizable. In this paper, we simultaneously repair these defects via a canonical completion in which all monomorphisms become regular. This completion is obtained by adjoining weak inverses and weak prime roots, turning the class of reduced commutative rings into a discriminator variety. As a consequence, we obtain an explicit description of dominions in every class of reduced commutative rings containing all fields. This description is strikingly simple compared to that of dominions in the category of all commutative rings, as reflected in the Isbell–Mazet–Silver Zigzag Theorem.

\end{abstract}

\maketitle

\section{Introduction}

Reduced commutative rings play a central role in algebraic geometry, where they arise as coordinate rings of 
affine algebraic sets (see, e.g., \cite[p.~48]{milneAG}).
They are characterized by the absence of nonzero nilpotent elements or, equivalently, by their embeddability into direct products of fields (see, e.g.,  \cite[3.14 p.~23]{Wis91}).  
Despite its  ubiquity, the class of reduced commutative rings exhibits significant structural deficiencies: it admits nonregular monomorphisms and epimorphisms, lacks amalgamation, and is not even equationally axiomatizable.\footnote{In the category of reduced commutative rings, the existence of nonregular epimorphisms is equivalent to the existence of nonsurjective epimorphisms.} 
In this paper, we show that these defects can be simultaneously repaired by a  completion in which all monomorphisms become regular (see \cite[Sec.\ 11]{CKMIMPv2}).
Remarkably, this completion is obtained by adjoining  two simple kinds of unary operations, namely weak inverses and weak prime roots, and turns the class of reduced commutative rings into a discriminator variety.  Furthermore, this completion is canonical, in the sense that it coincides with the unique (up to term equivalence)  equational class in which monomorphisms are regular that, moreover, is isomorphic via the forgetful functor to a mono-reflective subcategory of the category of reduced commutative rings (see \cref{Cor : categorical description}).

As a further consequence, we obtain a novel description of dominions in every class of reduced commutative rings containing all fields, such as the class of all integral domains, in terms of weak inverses and weak prime roots. The simplicity of this description contrasts sharply with the one of dominions in the ambient category of all commutative rings given by the Isbell–Mazet–Silver Zigzag Theorem (see \cite[Thm.~1.1]{IsbEpiIV}), which combines results from  \cite[p.\ 2-07]{Mazet-epi-seminar} and \cite[Prop.\ 1.1]{MR217114}.

\subsection*{Dominions and the regularity of monos and epis} 

By an \emph{algebra} $\A$ we understand a set $A$ endowed with a family of finitary operations on $A$. Familiar examples of algebras include monoids, groups, rings, and Boolean algebras. Two algebras are said to be \emph{similar} when they share the same language (see, e.g., \cite[Chap.\ 1.1]{Ber11} and \cite[Chap.\ 1.3]{ModCK}). Every class $\K$ of similar algebras can be viewed as a category whose objects are the members of $\K$ and whose arrows are the homomorphisms between them. We recall that an arrow in $\K$ is said to be a  \emph{monomorphism} (resp.\  \emph{epimorphism})  in $\K$ when it is left (resp.\ right) cancellable. 
A monomorphism (resp.\ epimorphism) in $\K$ is \emph{regular} when it is the equalizer (resp.\ coequalizer) of a parallel pair of arrows in $\K$ 
(see, e.g., \cite[Defs.\ 7.56 \& 7.71]{AHS06}).

While reduced commutative rings cannot be axiomatized equationally, they can still be axiomatized by implications between finite sets of equations. Classes of algebras of this kind are known as \emph{quasivarieties} and were introduced by Maltsev  (see  \cite{MR349384}). We recall that quasivarieties form bicomplete categories (see, e.g., \cite[Prop.~9.4.8 \& Thm.~9.4.14]{Ber15}) in which
 injective arrows coincide with monomorphisms and surjective arrows with regular epimorphisms  
(see, e.g., \cite[p.~222]{McK96} and \cite[Rem.~5.13(2)]{AR94}).

  In quasivarieties, the demand that all monomorphisms (resp.\ epimorphisms) be regular can be formulated in terms of Isbell's dominions (see \cite{Isb65}), as we proceed to recall. Let $\K$ be a class of similar algebras. Given a subalgebra $\A$  of a member $\B$ of $\K$ (in symbols $\A \leq \B \in \K$), the \emph{dominion} of $\A$ in $\B$ relative to $\K$ is the set
\begin{align*}
\mathsf{d}_\K(\A, \B) = \{ & b \in B : g(b) = h(b) \text{ for every pair of homomorphisms} \\
&g, h \colon \B \to \C  \text{ with }\C \in \K \text{ such that }g{\upharpoonright}_A = h{\upharpoonright}_A\}.
\end{align*}
When $\K$ is a quasivariety, the following holds (see, e.g., \cite[Prop.~6.1]{SurvKissal} and \cite[p.~1]{Isb65}):
\begin{align*}
    \text{every monomorphism in $\K$ is regular} &\iff \text{$\mathsf{d}_\K(\A, \B) = A$ for all $\A \leq \B \in \K$};\\
    \text{every epimorphism in $\K$ is regular} &\iff \text{$\mathsf{d}_\K(\A, \B) \ne B$ for all $\A < \B \in \K$},
\end{align*}
where $\A < \B$ means that $\A$ is a proper subalgebra of $\B$. Consequently, in the context of quasivarieties, the regularity of monomorphisms implies that of epimorphisms. 

\subsection*{Regularizing monomorphisms} As we mentioned, our aim is to complete the category of reduced commutative rings so that every monomorphism becomes regular. Since the requirement that all monomorphisms be regular in a quasivariety 
$\K$ can be expressed in terms of dominions, solving this problem necessitates a detailed analysis of dominions, which is facilitated by the following concept (see \cite[Sec.\ 3]{CKMIMPv2}).

An $n$-ary \emph{operation} of $\mathsf{K}$ is a family $f = \langle f^\A : \A \in \mathsf{K} \rangle$, where each $f^\A \colon \mathsf{dom}(f^\A) \to A$ is a partial $n$-ary function on $A$ with domain $\mathsf{dom}(f^\A) \subseteq A^n$ that is  globally preserved by the homomorphisms between members of $\mathsf{K}$.
The latter means that for every homomorphism $h \colon \A \to \B$ with $\A, \B \in \K$ and $\langle a_1, \dots, a_n \rangle \in \mathsf{dom}(f^\A)$ we have
\[
\langle h(a_1), \dots, h(a_n) \rangle \in \mathsf{dom}(f^\B) \, \, \text{ and } \, \, h(f^\A(a_1, \dots, a_n)) = f^\B(h(a_1), \dots, h(a_n)).
\]
 An $n$-ary operation $f$  of $\K$ is said to be \emph{implicit} when it is defined by some first order formula $\varphi(x_1, \dots, x_n, y)$, in the sense that for all $\A \in \K$ and $a_1, \dots, a_n, b \in A$,
 \[
\A \vDash \varphi(a_1, \dots, a_n, b) \iff \langle a_1, \dots, a_n \rangle \in \mathsf{dom}(f^\A) \text{ and }f^\A(a_1, \dots, a_n) = b.
 \]
 For instance, “taking multiplicative inverses” is an implicit operation of commutative rings because it is defined by the equation $xy \thickapprox 1$ and ring homomorphisms preserve  multiplicative inverses, when they exist.
Implicit operations and dominions are connected as follows (see \cite[Thm.~1]{Bacsich47} and \cite[Thm.~3.2]{Camper18jsl}). When $\K$ is a quasivariety and $\A \leq \B \in \K$, the dominion $\mathsf{d}_\K(\A, \B)$ is the result of closing $A$ in $B$ under all the implicit operations of $\K$. Formally, $\mathsf{d}_\K(\A, \B)$ consists of the elements $b \in B$ for which there exist an implicit operation $f$ of $\K$ and $\langle a_1, \dots, a_n \rangle \in \mathsf{dom}(f^\B) \cap A^n$ such that $b = f^\B(a_1, \dots, a_n)$.

This suggests a canonical method for completing a quasivariety $\K$ so that every monomorphism becomes regular. The overall strategy consists of adding enough implicit operations to $\K$ in such a way that the resulting expansion $\mathsf{C}$ is still a quasivariety and, moreover, satisfies the following closure property: if $\A \leq \B \in \mathsf{C}$, then $A$ is closed in $B$ under the implicit operations of $\mathsf{C}$. Consequently, for all $\A \leq \B \in \mathsf{C}$ we have $\mathsf{d}_\mathsf{C}(\A, \B) = A$, which ensures that all monomorphisms in $\mathsf{C}$ are regular. 

The result of such an expansion of $\K$ has been called a \emph{Beth companion} of $\K$ in \cite[Sec.\ 11]{CKMIMPv2}. For instance, the class of Abelian groups is a Beth companion of the quasivariety of cancellative commutative monoids (see \cite[Thm.\ 11.9(i)]{CKMIMPv2}), obtained by adding the implicit operation of “taking  inverses”.
While a quasivariety may lack a Beth companion  (see, e.g., \cite[Thm.~14.17]{CKMIMPv2} and \cite[Thm.\ 6.1]{CKMMON}), 
Beth companions are essentially unique when they exist because all Beth companions of a quasivariety $\K$ are term equivalent (see \cite[Thm.\ 11.7]{CKMIMPv2}).  In a slogan, Beth companions are the natural completions that regularize monomorphisms for quasivarieties.

\subsection*{The main result}  

Let $\mathsf{RCR}$ be the quasivariety of reduced commutative rings. In this paper, we show that  $\mathsf{RCR}$ admits a Beth companion, obtained by adjoining the implicit operations of “taking weak inverses” and “taking weak prime roots” (see \cref{Thm : main}). In addition, this completion of $\mathsf{RCR}$ is canonical, in the sense that it coincides with the unique (up to term equivalence) quasivariety in which monomorphisms are regular that, moreover, is isomorphic via the forgetful functor to a mono-reflective subcategory of $\mathsf{RCR}$ (see \cref{Cor : categorical description}). 
Beyond ensuring the regularity of monomorphisms (and  hence of epimorphisms), it also repairs the failure of the amalgamation property and yields a discriminator variety (see \cref{Thm : ICM has AP} and \cref{Rem : discriminator}).\
Thus, a minimal expansion transforms $\mathsf{RCR}$ into a remarkably structured class.

In order to describe concretely the Beth companion of $\mathsf{RCR}$, we recall that the characteristic of a field is either zero or prime (see, e.g., \cite[p.\ 30]{FLField}), and that every element $a$ of a field of prime characteristic $p$ has at most one $p$-th root that, when existing, will be denoted by $\sqrt[p]{a}$ (see, e.g., 
\cite[F14 p.\ 71]{FLField}). 
We say that a field $\A$ is \emph{weakly rooted} when one of the following conditions holds:
\benroman
\item $\A$ has characteristic $0$;
\item $\A$ has prime characteristic $p$ and contains $\sqrt[p]{a}$ for every $a \in A$.
\eroman
Besides the fields of characteristic zero, examples of weakly rooted fields include all finite fields and all algebraically closed fields 
(see \cref{Prop : ACF are weakly rooted}).
Given a prime $p$, the \emph{weak $p$-root} of an element $a$ of a weakly rooted field $\A$ is 
\[
 r_p(a) = \begin{cases*}
                    \sqrt[p]{a} & if  $\A$ has characteristic $p$;  \\
                     0 & otherwise.
                 \end{cases*}
 \]
In addition, the  \emph{weak inverse} of an element $a$ of a field $\A$ is
\[
 a^* = \begin{cases*}
                    a^{-1} & if  $a \ne 0$;  \\
                     0 & if $a = 0$,
                 \end{cases*}
 \]
 where $a^{-1}$ is the multiplicative inverse of $a$. Lastly, an \emph{implicitly closed field} is an algebra $\langle A; +, \cdot, -, 0, 1, (\,)^*, \{ r_p : \text{$p$ is prime}\}\rangle$, where $\langle A; +, \cdot, -, 0, 1\rangle$ is a weakly rooted field and $(\,)^*$ and $r_p$ are the unary operations of taking weak inverses and weak $p$-roots, respectively.

Our main result states that the Beth companion of $\mathsf{RCR}$  is the class of all the algebras that can be embedded into a direct product of implicitly closed fields.
We term these algebras \emph{implicitly closed meadows} and show that they can be axiomatized by the axioms of commutative rings together with the equations 
\[
x \thickapprox  x^2 x^*, \quad x \thickapprox x^{**}, \quad  (r_p(x))^p \thickapprox  (1-p^*p)x
\]
for every prime $p$, where we denote by $p$ the result of summing $p$-times the unit $1$ (see \cref{Thm : ICM axioms}).

\subsection*{Dominions}  

As a consequence, we obtain a novel characterization of dominions in every class $\K \subseteq \mathsf{RCR}$ containing all fields (see \cref{Thm : dominions}). In addition to $\mathsf{RCR}$ and the class of all fields, these comprise, for example, the class of all integral domains. Besides its wide applicability, an attractive feature of our characterization lies in its simplicity, which is in stark contrast with the more involved description of dominions in the category of all commutative rings given by the Isbell-Mazet-Silver Zigzag Theorem  (see \cite[Thm.\ 2.9]{Isb65}, \cite[Thm.~1.1]{IsbEpiIV}, \cite[p.\ 2-07]{Mazet-epi-seminar}, and \cite[Prop.\ 1.1]{MR217114}).\footnote{We remark that our result does not follow from the Isbell-Mazet-Silver Zigzag Theorem because, when $\A \leq \B \in \K \subseteq \K'$ for a class of similar algebras $\K'$, the equality $\mathsf{d}_\K(\A, \B) = \mathsf{d}_{\K'}(\A, \B)$ need not hold.}

More precisely, for all $\A \in \mathsf{RCR}$ and prime ideals $I$ of $\A$, let $\mathsf{acl}(\mathsf{frac}(\A / I))$ be the algebraic closure of the fraction field of $\A / I$, and $\mathsf{icf}_I(\A)$  the unique implicitly closed field whose field reduct is $\mathsf{acl}(\mathsf{frac}(\A / I))$. 
We show that for every class $\K \subseteq \mathsf{RCR}$ that contains all fields and for all $\A \leq \B \in \K$,
\begin{align*}
    \mathsf{d}_\K(\A, \B) = \{ & b \in B : \text{for every pair $I$ and $J$ of prime ideals of $\B$,  }\\
&\langle b + I, b + J \rangle \text{ belongs to the subalgebra of }\mathsf{icf}_I(\B) \times \mathsf{icf}_J(\B)\\
& \text{generated by }\{ \langle a + I, a + J \rangle : a \in A \} \}.
\end{align*}
This description becomes particularly simple when $\K$ is the class $\mathsf{F}$ of all fields, as for all $\A \leq \B \in \mathsf{F}$ the dominion $\mathsf{d}_\mathsf{F}(\A, \B)$
 is the least subfield of the algebraic closure of $\B$ that contains $A$ and, when $\B$ has characteristic $p$, is also closed under $p$-roots (see \cref{Cor : dominions in fields}).

\section{Field theory}

\subsection{Algebras} We recall  that an \emph{algebraic language} is a family $\L$ of function symbols together with a map $\mathsf{ar} \colon \L \to \mathbb{N}$ that associates an arity with every member of $\L$. Then, an $\L$-\emph{algebra} is a structure $\A = \langle A; \{ f^\A : f \in \L \} \rangle$, where $A$ is set  and $f^\A$ is a function on $A$ of arity $\mathsf{ar}(f)$ for every $f \in \L$ (see, e.g., \cite[Sec.\ 1.1]{Ber11} and \cite[Sec.\ 1.3]{ModCK}).\ Two algebras are said to be \emph{similar} when they have the same language. A class of similar algebras is  called \emph{elementary} when it can be axiomatized by first order formulas.

A \emph{term} of a class of similar algebras $\K$ is a formal expression obtained by applying the function symbols of $\K$ to the set of variables. For instance, $x + ( y \cdot -z)$ is a ring term. With every term $t(x_1, \dots, x_n)$ of $\K$ and $\A \in \K$ we associate a map $t^\A \colon A^n \to A$ that sends a tuple $\langle a_1, \dots, a_n \rangle \in A^n$ to the result of applying the interpretation of $t$ in $\A$ to the elements $a_1, \dots, a_n$. For instance, if $t(x, y, z) = x + (y \cdot - z)$ and $\A$ is a ring, then $t^\A \colon A^3 \to A$ is the map defined as $t^\A(a, b, c) = a + (b \cdot -c)$ for all $a, b, c \in A$ (see, e.g., \cite[Sec.\ 4.3]{Ber11}).

 Let $\A$ and $\B$ be a pair of similar algebras. We write $\A \leq \B$ to indicate that $\A$ is a subalgebra of $\B$. Moreover, when $h \colon \A \to \B$ is a homomorphism, we denote the subalgebra of $\B$ with universe $h[A]$ by $h[\A]$. The subalgebra of $\A$ generated by some $X \subseteq A$ will be denoted by $\mathsf{Sg}^{\A}(X)$.
We recall that 
\[
\mathsf{Sg}^\A(X) = \{ t^\A(a_1, \dots, a_n) :  t(x_1, \dots, x_n) \text{ is a term of $\A$ and }a_1, \dots, a_n \in A \}
\]
(see, e.g., \cite[Thm.\ 1.14]{Ber11}).\
 Lastly, let $\III, \HHH, \SSS, \PPP$, and $\PPU$ be the class operators of closure under isomorphisms, homomorphic images, subalgebras, direct products, and ultraproducts, respectively. 

\subsection{Fields and integral domains}
 Throughout this work, rings will be considered as algebras in the language $\mathscr{L} = \{ +, \cdot, -, 0, 1  \}$. Consequently, all rings will be assumed to be unital. When it exists, we denote the multiplicative inverse of an element $a$ of a ring by $a^{-1}$.
\begin{Definition}
A nontrivial commutative ring $\A$ is said to be:
\benroman
\item a \emph{field} when every nonzero element of $A$ has a multiplicative inverse;
\item an \emph{integral domain} when for all $a, b \in A$ such that $ab = 0$ we have $0 \in \{ a, b\}$ or, equivalently, when   
$\A \leq \B$ for some field $\B$ 
(see, e.g., \cite[F7 p.\ 28]{FLField}).
\eroman
The classes of all fields and of all integral domains will be denoted, respectively, by $\mathsf{F}$  and $\mathsf{ID}$.
\end{Definition}

\begin{Definition}
A commutative ring $\A$ is said to be \emph{reduced} when  $a^2 = 0$ implies $a= 0$ for every $a \in A$ or, equivalently, when $\A \in \III\SSS\PPP(\mathsf{F})$ (see, e.g., \cite[3.14 p.~23]{Wis91}). We denote the class of reduced commutative rings by $\mathsf{RCR}$.
\end{Definition}

By definition we have
\[
\mathsf{ID} = \SSS(\mathsf{F}) \, \, \text{ and }\, \, \mathsf{RCR} = \III\SSS\PPP(\mathsf{F}).
\]

We recall that every ideal $I$ of a commutative ring $\A$ induces a quotient ring $\A / I$. 

\begin{Proposition}[\protect{\cite[Def.\ 9, p.~41]{FLField}}]\label{Prop : prime ideals and integral domains}
 Let $I$ be an ideal of a commutative ring $\A$. Then $I$ is prime if and only if $\A / I$ is an integral domain.
\end{Proposition}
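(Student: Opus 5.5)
The plan is to unfold both sides of the equivalence into conditions on elements of $A$, using two facts: the standard description of the quotient ring $\A / I$, whose elements are the cosets $a + I$ with $(a+I)(b+I) = ab + I$ and $a + I = 0 + I$ iff $a \in I$; and the definition of integral domain recalled above, namely a nontrivial commutative ring without zero divisors. Recall also that an ideal $I$ is \emph{prime} when $I \neq A$ and, for all $a, b \in A$, $ab \in I$ implies $a \in I$ or $b \in I$.

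First, the quotient $\A / I$ is commutative, since multiplication of cosets is computed from representatives in the commutative ring $\A$. Next, nontriviality of $\A / I$ matches properness of $I$: the quotient is trivial exactly when $1 + I = 0 + I$, that is, when $1 \in I$. Because $I$ is an ideal, $1 \in I$ holds if and only if $I = A$.

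The absence of zero divisors in $\A / I$ matches the primality condition on $I$. The statement ``$(a+I)(b+I) = 0 + I$ implies $a + I = 0 + I$ or $b + I = 0 + I$'' rewrites as ``$ab \in I$ implies $a \in I$ or $b \in I$''. Every element of $\A / I$ has the form $a + I$, so quantifying over pairs of cosets is the same as quantifying over pairs $a, b \in A$. Combining the two correspondences gives both directions at once.

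None of these steps is a genuine obstacle. The only point needing care is the convention that a prime ideal is proper, which is what guarantees that $\A / I$ is nontrivial, as required by the definition of integral domain. If the paper's convention on prime ideals differed from this, that would be the one step to adjust.
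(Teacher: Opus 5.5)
Your argument is correct and complete. Commutativity passes to the quotient. Nontriviality of $\A/I$ amounts to $1 \notin I$, i.e.\ to $I \neq A$. The absence of zero divisors in $\A/I$ is a direct rewriting of the primality condition, since every element of $\A/I$ is a coset $a+I$ and $a+I = 0+I$ iff $a \in I$.

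The paper gives no proof of its own; it only cites a definition in the field-theory text it relies on. Your elementwise check is therefore exactly the routine verification that the two usual formulations of a prime ideal agree. You are also right that the properness convention is the only place where conventions could matter.
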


We also recall that every integral domain $\A$ can be embedded into its field of fractions $\mathsf{frac}(\A)$. To simplify the notation, from now on we will assume that $\A \leq \mathsf{frac}(\A)$.

\begin{Proposition}[\protect{\cite[F7 p.\ 28]{FLField}}]\label{Prop : embedding fields of fractions}
Let $\A \in \mathsf{ID}$ and $\B \in \mathsf{F}$. For every embedding $h \colon \A \to \B$ there exists an embedding $g \colon \mathsf{frac}(\A) \to \B$ such that $ g{\upharpoonright}_A = h$.
\end{Proposition}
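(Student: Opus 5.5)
The plan is to apply the universal property of fractions, extending the embedding $h$ to $\mathsf{frac}(\A)$ by forcing $g(a/b) = h(a)h(b)^{-1}$.

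First, since $h$ is injective and $\B$ is a field, $h(b)\neq 0$ whenever $b \neq 0$. Hence $h(b)^{-1}$ exists in $\B$. Every element of $\mathsf{frac}(\A)$ can be written as $a/b$ with $a,b\in A$ and $b\neq 0$, so we set
\[
g(a/b) = h(a)h(b)^{-1}.
\]

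Second, we check that this is well defined. If $a/b = c/d$, then $ad = cb$ in $\A$. Applying $h$ gives $h(a)h(d) = h(c)h(b)$, and multiplying by $h(b)^{-1}h(d)^{-1}$ yields $h(a)h(b)^{-1} = h(c)h(d)^{-1}$.

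Third, $g$ preserves the operations. This follows from the usual formulas $a/b + c/d = (ad+cb)/(bd)$ and $(a/b)(c/d) = (ac)/(bd)$, together with the fact that $h$ is a ring homomorphism. The values on $0$, $1$ and $-$ are immediate.

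Fourth, $g$ extends $h$: under the identification $a = a/1$ we get $g(a) = h(a)h(1)^{-1} = h(a)$.

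Finally, $g$ is injective. The ring $\mathsf{frac}(\A)$ is a field, so the kernel of $g$ is an ideal of a field, hence either $\{0\}$ or everything. It cannot be everything because $g(1) = 1 \neq 0$ in the nontrivial ring $\B$. Alternatively, one checks directly that $g(a/b) = 0$ forces $h(a) = 0$, hence $a = 0$.

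The only step that needs care is well-definedness, and the point there is to use injectivity of $h$ exactly to guarantee that the denominators $h(b)$ are invertible. Everything else is routine verification.
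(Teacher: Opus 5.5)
Your argument is correct: it is the standard universal-property construction $g(a/b)=h(a)h(b)^{-1}$. The paper gives no proof of its own and only cites the textbook fact, whose proof is essentially what you wrote. One small precision: injectivity of $h$ is what makes $g$ defined at all, since it guarantees $h(b)\neq 0$ for $b\neq 0$, whereas the well-definedness check itself only uses that $h$ is a ring homomorphism.
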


For every $n \in \mathbb{N}$ we define recursively a term $nx$ in the language of rings by setting
   \[
   0x = 0\, \, \text{ and }\, \,  (n+1)x = nx+x.
   \]
\begin{Definition}
The \emph{characteristic} of a field $\A$ is the smallest $n \in \mathbb{Z}^+$ such that in $\A$ we have $n1 = 0$ if such an $n$ exists and $0$ otherwise.
\end{Definition}

\begin{Proposition}[\protect{\cite[p.\ 30]{FLField}}]\label{Prop : characteristic field prime or zero}
    The characteristic of a field is either prime or zero.
\end{Proposition}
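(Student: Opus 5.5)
The plan is to reduce the claim to the fact that a field has no zero divisors, and then use the multiplicativity of the map $n \mapsto n1$. Let $\A$ be a field and suppose its characteristic is some $n \in \mathbb{Z}^+$, meaning $n$ is the least positive integer with $n1 = 0$. If no such $n$ exists the characteristic is $0$ and there is nothing to prove, so we only need to show that this $n$ is prime.

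First observe that $n \neq 1$. Indeed, $1 \cdot 1 = 0\cdot 1 + 1 = 1$, and $1 \neq 0$ because fields are nontrivial by definition. Hence $n \geq 2$.

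Next we verify, by a routine induction on the recursive definition of $nx$, that $(mk)1 = (m1)(k1)$ for all $m, k \in \mathbb{N}$. Fix $m$ and induct on $k$. For $k=0$ both sides equal $0$. For the inductive step, $(m(k+1))1 = (mk + m)1 = (mk)1 + m1$, where we also use additivity $(a+b)1 = a1 + b1$, itself a simple induction. By the induction hypothesis this equals $(m1)(k1) + (m1)\cdot 1 = (m1)((k+1)1)$, by distributivity.

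Now suppose, towards a contradiction, that $n$ is not prime. Then $n = mk$ with $1 < m, k < n$. We get $0 = n1 = (m1)(k1)$. A field is an integral domain: if $ab = 0$ and $a \neq 0$, multiplying by $a^{-1}$ gives $b = 0$. Therefore $m1 = 0$ or $k1 = 0$, which contradicts the minimality of $n$ since $0 < m, k < n$. So $n$ is prime. The only step requiring any care is the multiplicativity identity, and it is purely mechanical.
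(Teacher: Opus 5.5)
Your proof is correct and complete: the nontriviality step, the inductive proof of $(mk)1 = (m1)(k1)$, and the appeal to the absence of zero divisors are all sound.

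The paper gives no argument of its own for this proposition; it only cites the standard textbook fact. The usual textbook proof is essentially yours in compressed form: the kernel of the ring homomorphism $\mathbb{Z} \to \A$, $n \mapsto n1$, is a prime ideal because the image lies in a field, so the kernel is $\{0\}$ or $p\mathbb{Z}$ for a prime $p$. Your multiplicativity identity is exactly the verification that this map is multiplicative.
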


\subsection{Field extensions}

We denote the ring of polynomials in  variables $x_1, \dots, x_n$  with coefficients in a field $\A$ by  $\A[x_1, \dots, x_n]$ and recall that $\A[x_1, \dots, x_n]$  is always an integral domain (see, e.g., \cite[Remark p.\ 149]{KnaAlg}).

\begin{Definition}
    A \emph{field extension} consists of a pair of fields $\A$ and $\B$ such that $\A \leq \B$. Given a field extension $\A \leq \B$, an element $b \in B$ is said to be:
    \benroman
\item \emph{algebraic} over $\A$ when there exists a nonzero polynomial $p(x) \in \A[x]$ with coefficients in $\A$ such that $p(b) = 0$;
\item \emph{transcendental} over $\A$ when it is not algebraic over $\A$.
    \eroman
\end{Definition}

Let $\A \leq \B$ be a field extension and $b \in B$ algebraic over $\A$. Then there exists a unique monic polynomial $\mu_b(x) \in \A[x]$, known as the \emph{minimal polynomial} of $b$ over $\A$, with $\mu_b(b) = 0$ that is of minimal degree among the members of $\A[x]$ with root $b$ (see, e.g., \cite[p.17]{FLField}). It is well known that $\mu_b$ is irreducible (see, e.g., \cite[F6\ p.~26]{FLField}). Lastly, given a field extension $\A \leq \B$ and  $b_1, \dots, b_n \in B$,  we denote the smallest subfield of $\B$ containing  $A \cup \{ b_1, \dots, b_n \}$ by $\A(b_1, \dots, b_n)$  (see, e.g., \cite[p.~6]{FLField}). Moreover, we write  $\A(x_1, \dots, x_n)$ as a shorthand for $\mathsf{frac}(\A[x_1, \dots, x_n])$.

\begin{Proposition} \label{Prop : special field ext homs}
The following conditions hold for all field extensions $\A \leq \B$ and $b \in B$:
\benroman
\item \label{item : field ext hom alg} if $b$ is algebraic over $\A$ and $c$ is a root of $\mu_b$ in some field $\C$ with $\A \leq \C$, there exists an isomorphism $h \colon \A(b) \to \A(c)$ such that
\[
h(b) = c \, \, \text{ and }\, \, h(a) = a \, \, \text{ for every } a \in A;
\]
\item \label{item : field ext hom trans} if $b$ is transcendental over $\A$, there exists an isomorphism $h \colon \A(b) \to \A(x)$ such that 
\[
h(b) = x \, \, \text{ and }\, \, h(a) = a \, \, \text{ for every } a \in A.
\]
\eroman
\end{Proposition}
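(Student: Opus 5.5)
Both items come from the evaluation homomorphism $\mathrm{ev}_b \colon \A[x] \to \B$, $p(x) \mapsto p(b)$, together with \cref{Prop : embedding fields of fractions}.

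For \eqref{item : field ext hom alg}, first identify $\A(b)$ with $\A[b]$, the image of $\mathrm{ev}_b$. The kernel of $\mathrm{ev}_b$ is a nonzero ideal of the principal ideal domain $\A[x]$. It contains $\mu_b$, and by minimality of degree it is generated by $\mu_b$. Since $\mu_b$ is irreducible, $(\mu_b)$ is maximal. Hence $\A[x]/(\mu_b) \cong \A[b]$ is a field containing $A \cup \{b\}$, so $\A[b] = \A(b)$.

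Now apply the same reasoning to $c$ in $\C$. The kernel of $\mathrm{ev}_c$ contains $\mu_b$ and is proper, because $1 \mapsto 1$. Since $(\mu_b)$ is maximal, the kernel equals $(\mu_b)$. This gives $\A[x]/(\mu_b) \cong \A[c] = \A(c)$. Composing $\A(b) \cong \A[x]/(\mu_b) \cong \A(c)$ yields an isomorphism $h$. It fixes $A$ because constants are sent to themselves, and it sends $b$ to the class of $x$ and then to $c$.

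For \eqref{item : field ext hom trans}, transcendence of $b$ means exactly that $\mathrm{ev}_b$ is injective. So $\A[x] \cong \A[b] \le \B$, and the inverse map $\A[b] \to \A[x] \le \A(x)$ is an embedding of an integral domain into a field that sends $b \mapsto x$. By \cref{Prop : embedding fields of fractions}, this extends to an embedding $g \colon \mathsf{frac}(\A[b]) \to \A(x)$.

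It remains to check two things:
\begin{itemize}
\item $\mathsf{frac}(\A[b])$ can be identified with $\A(b)$. By the same proposition applied to the inclusion $\A[b] \le \B$, the fraction field embeds into $\B$ over $\A[b]$. Its image is a subfield containing $A \cup \{b\}$, and it is contained in every such subfield. So the image is $\A(b)$.
\item $g$ is surjective. Its image is a subfield of $\A(x)$ containing $\A[x]$, hence all of $\A(x) = \mathsf{frac}(\A[x])$.
\end{itemize}
Thus $h = g$ is the required isomorphism, fixing $A$ and sending $b \mapsto x$.

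The step needing the most care is these identifications of $\A(b)$, in both items, with the ring or fraction field generated inside $\B$. That in turn depends on the standard facts that $\A[x]$ is a principal ideal domain and that irreducible elements generate maximal ideals; everything else is routine.
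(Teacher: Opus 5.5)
Your proof is correct. The paper gives no argument of its own for this proposition; it only cites the standard textbook facts. Your evaluation-homomorphism proof is essentially the standard argument behind those citations: in the algebraic case you identify both $\A(b)$ and $\A(c)$ with $\A[x]/(\mu_b)$, and in the transcendental case you identify $\A(b)$ with $\mathsf{frac}(\A[x])$ via \cref{Prop : embedding fields of fractions}.
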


\begin{proof}
See, e.g., \cite[F1\ p.~55]{FLField} and 
    \cite[F9\ p.~30]{FLField}. 
\end{proof}

We will make use of the following concept.

\begin{Definition}
A field $\A$ is said to be \emph{algebraically closed} when every nonconstant polynomial in $\A[x]$ has a root in $\A$. Given a field extension $\A \leq \B$, we say that
\benroman
\item $\A \leq \B$ is  \emph{algebraic} when every member of $\B$ is algebraic over $\A$;
  \item $\B$ is  an \emph{algebraic closure} of $\A$ when it is algebraically closed and $\A \leq \B$ is algebraic.
\eroman
\end{Definition}

\begin{Theorem}[\protect{\cite[Thm.\ 2 p.\ 57]{FLField}}]\label{Thm : existence of ACL}
Every field $\A$ has an algebraic closure that, moreover, is unique up to isomorphism, in the sense that if $\B$ and $\C$ are algebraic closures of $\A$, there exists an isomorphism $h \colon \B \to \C$ that fixes every member of $\A$.
\end{Theorem}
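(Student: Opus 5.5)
The statement is the classical existence and uniqueness of algebraic closures, and I plan to prove it in three steps. First, build an algebraically closed extension of $\A$. Second, cut it down to the elements algebraic over $\A$. Third, obtain uniqueness from an extension lemma for embeddings into algebraically closed fields, proved with Zorn's Lemma and \cref{Prop : special field ext homs}.

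\emph{Existence.} I would use Artin's construction. Consider the polynomial ring $\A[\{x_f\}]$ with one variable $x_f$ for each nonconstant $f \in \A[x]$. Let $I$ be the ideal generated by the elements $f(x_f)$.

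I first check that $I$ is proper. If it were not, then $1 = \sum g_i f_i(x_{f_i})$ for finitely many $f_i$. Passing to a finite extension in which each $f_i$ has a root $c_i$ and substituting $x_{f_i} \mapsto c_i$ gives $1 = 0$, a contradiction. The required finite extension comes from iterating the quotient $\A[x]/(\text{irreducible factor})$, which is a field.

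Take a maximal ideal $M \supseteq I$ and set $\A_1 = \A[\{x_f\}]/M$. This field extends $\A$ and contains a root of every nonconstant polynomial over $\A$. Iterating gives a chain $\A = \A_0 \leq \A_1 \leq \A_2 \leq \cdots$, whose union $\C$ is algebraically closed: a polynomial over $\C$ has its coefficients in some $\A_n$, so it has a root in $\A_{n+1}$.

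Now let $\B$ be the set of elements of $\C$ that are algebraic over $\A$. This is a subfield, because $\A(b,c)$ is a finite-dimensional extension whenever $b$ and $c$ are algebraic, so every element of $\A(b,c)$ is algebraic. To see that $\B$ is algebraically closed, take a nonconstant $p \in \B[x]$. It has a root $c \in \C$, which is algebraic over the finitely generated algebraic extension $\A(\text{coefficients})$, hence algebraic over $\A$ by transitivity of finite degrees. So $c \in \B$.

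\emph{Uniqueness.} I would prove the following lemma. If $\A \leq \B$ is algebraic, $\D$ is algebraically closed, and $h \colon \A \to \D$ is an embedding, then $h$ extends to an embedding $\B \to \D$.

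The proof applies Zorn's Lemma to the set of pairs $(\E, g)$ with $\A \leq \E \leq \B$ and $g$ an embedding extending $h$, ordered by extension. Every chain has its union as an upper bound, so there is a maximal pair $(\E, g)$. Suppose $b \in B \setminus E$. Its minimal polynomial $\mu_b$ over $\E$, transported via $g$, has a root $c \in \D$ because $\D$ is algebraically closed. \cref{Prop : special field ext homs}\ref{item : field ext hom alg} (applied after identifying $\E$ with $g[\E]$) then extends $g$ to $\E(b)$, contradicting maximality. Hence $\E = \B$.

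Given two algebraic closures $\B$ and $\C$ of $\A$, the lemma yields an embedding $g \colon \B \to \C$ fixing $A$. Its image $g[\B]$ is algebraically closed, and $\C$ is algebraic over $\A \leq g[\B]$. So every $c \in C$ is a root of a polynomial that splits into linear factors over $g[\B]$, which gives $c \in g[B]$. Therefore $g$ is surjective.

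\emph{Main obstacle.} I expect the main obstacle to be the existence step, specifically showing that $I$ is proper. That argument relies on the existence of splitting or root fields for finitely many polynomials. The transitivity of algebraicity is routine, via finite degrees.
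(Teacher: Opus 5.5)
Your proof is correct. The paper gives no argument of its own for this statement; it simply quotes \cite[Thm.\ 2 p.\ 57]{FLField}. What you have written is the standard textbook proof: Artin's maximal-ideal construction for existence, and a Zorn's Lemma extension lemma built on \cref{Prop : special field ext homs}\eqref{item : field ext hom alg} for uniqueness. The only details left implicit are routine, such as evaluating the remaining variables of the $g_i$ (say at $0$) in the properness step.
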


Consequently, we may talk about \emph{the} algebraic closure of a field $\A$, which we denote by $\mathsf{acl}(\A)$. We will rely on the next property of algebraic closures (see, e.g., \cite[Thm.\ 9.23 p.\ 463]{KnaAlg}), for which we recall that every homomorphism between fields is an embedding (see, e.g., \cite[F15 p.\ 42]{FLField}).

\begin{Proposition}\label{Prop : extensing acl embeddings}
Let $\A, \B \in \mathsf{F}$. For every embedding $h \colon \A \to \B$ there exists an embedding $g \colon \mathsf{acl}(\A) \to \mathsf{acl}(\B)$ such that $h = g{\upharpoonright}_A$.
\end{Proposition}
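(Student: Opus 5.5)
The plan is to construct $g$ by Zorn's lemma applied to partial embeddings of algebraic subextensions, after first reducing to the case of an inclusion. Since $h \colon \A \to \B$ is an embedding, $h[\A]$ is a subfield of $\B$, and hence of $\mathsf{acl}(\B)$. Let $\C = \mathsf{acl}(\B)$. I regard $\mathsf{acl}(\A)$ as an algebraic extension of $\A$ and look for an embedding $g \colon \mathsf{acl}(\A) \to \C$ with $g{\upharpoonright}_A = h$.

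Consider the set $\mathcal{P}$ of pairs $\langle \D, f \rangle$, where $\A \leq \D \leq \mathsf{acl}(\A)$ is an intermediate field and $f \colon \D \to \C$ is an embedding with $f{\upharpoonright}_A = h$. Order $\mathcal{P}$ by extension, so that $\langle \D, f\rangle \leq \langle \D', f'\rangle$ when $\D \leq \D'$ and $f'{\upharpoonright}_D = f$. The set $\mathcal{P}$ is nonempty because it contains $\langle \A, h\rangle$. For a chain in $\mathcal{P}$, the union of the fields is again a subfield of $\mathsf{acl}(\A)$, and the union of the maps is an embedding on it, which gives an upper bound. By Zorn's lemma there is therefore a maximal pair $\langle \D, f\rangle$.

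The key step is to show that $D = \mathsf{acl}(A)$. Suppose instead that some $b \in \mathsf{acl}(A) \smallsetminus D$ exists. Then $b$ is algebraic over $\A$, hence over $\D$, with minimal polynomial $\mu_b \in \D[x]$. Apply $f$ to the coefficients of $\mu_b$ to obtain a polynomial $\mu_b^f \in f[\D][x]$. This polynomial is nonconstant, since $b \notin D$ forces $\deg \mu_b \geq 1$, so it has a root $c$ in the algebraically closed field $\C$.

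Next I extend $f$ to $\D(b)$ by sending $b$ to $c$. This is where I expect the main care to be needed, because \cref{Prop : special field ext homs}\ref{item : field ext hom alg} is stated for a common base field and not for a base transported along an isomorphism. To handle this, I identify $\D$ with $f[\D]$ via $f$; concretely, I transport the structure so that $f$ becomes an inclusion. The polynomial $\mu_b^f$ is irreducible over $f[\D]$, since $f \colon \D \to f[\D]$ is an isomorphism. Under this identification it is the minimal polynomial of the transported copy of $b$, so \cref{Prop : special field ext homs}\ref{item : field ext hom alg} yields an isomorphism $\D(b) \to f[\D](c)$ extending $f$ and sending $b \mapsto c$. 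Alternatively, I could argue directly: $\D(b) \cong \D[x]/(\mu_b)$, and $x \mapsto c$ induces a homomorphism $\D[x] \to \C$ extending $f$ whose kernel contains $(\mu_b)$. Because $(\mu_b)$ is maximal, the induced map on $\D[x]/(\mu_b)$ is a nonzero field homomorphism, hence an embedding.

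Either way, the resulting pair $\langle \D(b), f' \rangle$ lies in $\mathcal{P}$ and strictly extends $\langle \D, f \rangle$, which contradicts maximality. Hence $\D = \mathsf{acl}(\A)$, and $g = f$ is the required embedding with $g{\upharpoonright}_A = h$.
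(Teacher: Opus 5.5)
Your proof is correct: it is the standard Zorn's lemma argument for extending an embedding along an algebraic extension into an algebraically closed field, which is exactly the textbook result (Knapp, Thm.~9.23) that the paper cites instead of giving its own proof. Of your two ways of extending $f$ to $\D(b)$, the direct one via $\D[x]/(\mu_b)$ is the rigorous one, so the informal ``transport of structure'' remark can be dropped. A minor point: $\deg \mu_b \geq 1$ holds for every minimal polynomial, and $b \notin D$ actually gives $\deg \mu_b \geq 2$, though only nonconstancy is needed.
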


 Given a ring $\A$, the degree of a polynomial $p \in \A[x]$ will be denoted by $\mathsf{deg}(p)$. We will make use of separable extensions and perfect fields, which are usually defined in terms of splitting fields (see, e.g., \cite[Chap.\ 7]{FLField}). However, for the present purpose, it is convenient to adopt the following definition,
which is equivalent because every splitting field of a field $\A$ embeds into the algebraic closure of $\A$ (see, e.g., \cite[proof of F4 p.\ 61]{FLField}).

    \begin{Definition} 
Given a field $\A$, we say that
\benroman
\item an algebraic field extension $\A \leq \B$ is \emph{separable} when the minimal polynomial $\mu_b$ of every $b \in B$ over $\A$ has $\mathsf{deg}(\mu_b)$ distinct roots in $\mathsf{acl}(\A)$;
   \item $\A$ is  \emph{perfect} when every algebraic extension of $\A$ is separable. 
   \eroman
    \end{Definition}

\section{Implicitly closed meadows}

\subsection{Field expansions}  

Recall from \cref{Prop : characteristic field prime or zero} that the characteristic of a field is either zero or prime. An element $a$ of a field of prime characteristic $p$ has at most one $p$-th root that, when it exists, will be denoted by $\sqrt[p]{a}$ (see, e.g., 
\cite[F14 p.\ 71]{FLField}).

\begin{Definition}
A field $\A$ is \emph{weakly rooted} when one of the following conditions holds:
\benroman
\item $\A$ has characteristic $0$;
\item $\A$ has prime characteristic $p$ and contains $\sqrt[p]{a}$ for every $a \in A$.
\eroman
                The class of weakly rooted fields will be denoted by $\mathsf{WRF}$.
\end{Definition}

\begin{Proposition}\label{Prop : ACF are weakly rooted}
Every finite or algebraically closed field is weakly rooted.
\end{Proposition}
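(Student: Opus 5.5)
The plan is to use the Frobenius map. Let $\A$ be a field. If $\A$ has characteristic $0$, it is weakly rooted by definition, so we may assume that $\A$ has prime characteristic $p$ (\cref{Prop : characteristic field prime or zero}). Consider the map $\phi \colon A \to A$ given by $\phi(a) = a^p$. In characteristic $p$ the binomial coefficients $\binom{p}{i}$ with $0 < i < p$ vanish, so $(a+b)^p = a^p + b^p$. Multiplicativity and $\phi(1) = 1$ are clear, so $\phi$ is a ring endomorphism of $\A$. As every homomorphism between fields is an embedding, $\phi$ is injective. Showing that $\A$ is weakly rooted amounts to showing that $\phi$ is surjective, since $\sqrt[p]{a}$ exists exactly when $a \in \phi[A]$.

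Finite case. If $A$ is finite, an injective map $A \to A$ is surjective. Hence every $a \in A$ has a $p$-th root.

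Algebraically closed case. Given $a \in A$, the polynomial $x^p - a \in \A[x]$ is nonconstant, because $p \geq 2$. By algebraic closedness it has a root $b \in A$, so $b^p = a$ and $b = \sqrt[p]{a}$.

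Neither step is a real obstacle. The only point needing care is the additivity of the Frobenius map, which uses that $p$ divides $\binom{p}{i}$ for $0<i<p$, together with the fact that $p1 = 0$ in $\A$.
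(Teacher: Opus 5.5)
Your proof is correct and matches the paper's. The algebraically closed case is the same argument (take a root of $x^p - a$). For finite fields the paper only cites a textbook remark, and your argument — the Frobenius map is an injective endomorphism of a finite set, hence surjective — is the standard proof of that cited fact.
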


\begin{proof}
From \cite[Remark, p.\ 71]{FLField} it follows that every finite field is weakly rooted.
   Then let $\A$ be an algebraically closed field.\  If $\A$ has characteristic zero, we are done. Then we consider the case where $\A$ has prime characteristic $p$. We need to prove that $\sqrt[p]{a}$ exists for every $a \in A$. To this end, consider $a \in A$. As $\A$ is algebraically closed, the polynomial $x^p - a$ has a root $b$ in $\A$, i.e., $b^p = a$. Hence, we conclude that $b = \sqrt[p]{a}$. 
\end{proof}

From \cref{Thm : existence of ACL} and \cref{Prop : ACF are weakly rooted} we deduce the following.

\begin{Corollary}\label{Cor : every field extend to a rooted one}
Every field can be extended to a weakly rooted one.
\end{Corollary}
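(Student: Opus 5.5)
Given a field $\A$, we will extend it to its algebraic closure $\mathsf{acl}(\A)$, which exists by \cref{Thm : existence of ACL}. By definition, $\A \leq \mathsf{acl}(\A)$ is a field extension, so $\mathsf{acl}(\A)$ is a field extending $\A$. Since $\mathsf{acl}(\A)$ is algebraically closed, \cref{Prop : ACF are weakly rooted} tells us that it is weakly rooted. This proves the claim.

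The argument has only two steps: invoke the existence of the algebraic closure, then apply the proposition that algebraically closed fields are weakly rooted. Neither step presents a real obstacle. The only point worth checking is that "extended" here means an embedding into, or a superfield of, a weakly rooted field. That is exactly what the inclusion $\A \leq \mathsf{acl}(\A)$ gives, under the paper's standing convention of identifying $\A$ with its copy inside $\mathsf{acl}(\A)$.
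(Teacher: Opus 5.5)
Your proposal is correct and is exactly the paper's argument: take $\mathsf{acl}(\A)$, which exists by \cref{Thm : existence of ACL} and extends $\A$. Since it is algebraically closed, it is weakly rooted by \cref{Prop : ACF are weakly rooted}.
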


We will also make use of the following observation (see, e.g., \cite[F11\ p.\ 70 and F19\ p.\ 73]{FLField}).

    \begin{Theorem} \label{Thm : conditions for perfect}
        Every weakly rooted field is perfect.
    \end{Theorem}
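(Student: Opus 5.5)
The plan is to prove that every algebraic extension $\A \leq \B$ of a weakly rooted field $\A$ is separable. That is, for each $b \in B$ I will show that the minimal polynomial $\mu_b$ has $\mathsf{deg}(\mu_b)$ distinct roots in $\mathsf{acl}(\A)$. I will use the standard criterion via the formal derivative: an irreducible polynomial $f \in \A[x]$ has repeated roots in $\mathsf{acl}(\A)$ if and only if $f$ and $f'$ share a root there. Since $f$ is irreducible and $\mathsf{deg}(f') < \mathsf{deg}(f)$, such a shared root forces $f' = 0$. The reason is that $f$ is, up to a scalar, the minimal polynomial of any of its roots, so $f$ divides $f'$. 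So it suffices to show that no irreducible polynomial over a weakly rooted field has zero derivative.

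\textbf{Characteristic zero.} If $\A$ has characteristic $0$ and $f = \sum_{i \leq n} c_i x^i$ with $n \geq 1$ and $c_n \neq 0$, then the leading coefficient of $f'$ is $n c_n \neq 0$. Hence $f' \neq 0$ and $\mu_b$ is separable.

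\textbf{Characteristic $p$.} Suppose $\A$ has prime characteristic $p$ (cf.\ \cref{Prop : characteristic field prime or zero}) and $f' = 0$. Then $i c_i = 0$ for every $i$, so $c_i = 0$ unless $p \mid i$, and $f = \sum_j c_{pj} x^{pj}$. Because $\A$ is weakly rooted, each $c_{pj}$ has a $p$-th root $d_j = \sqrt[p]{c_{pj}} \in A$. By the Frobenius identity $(u+v)^p = u^p + v^p$, which holds in characteristic $p$ because the binomial coefficients $\binom{p}{k}$ with $0<k<p$ are divisible by $p$, we get $f = \bigl(\sum_j d_j x^j\bigr)^p$. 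Since $\mathsf{deg}(f) \geq 1$, this contradicts the irreducibility of $f$. Hence $f' \neq 0$, and the extension is separable.

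\textbf{Main obstacle.} The only delicate step is the derivative criterion connecting repeated roots in $\mathsf{acl}(\A)$ with $f' = 0$. I would justify it directly. If $c$ is a repeated root, write $f = (x-c)^2 g$ over $\mathsf{acl}(\A)$; the product rule gives $f'(c) = 0$. Then $\mu_c$, which equals $f$ up to a unit, divides $f'$ in $\A[x]$. Since $\mathsf{deg}(f') < \mathsf{deg}(f)$, this forces $f' = 0$. The remaining steps are routine, so the argument reduces the theorem to the two cases above.
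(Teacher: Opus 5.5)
Your proof is correct and follows essentially the paper's route. The paper gives no argument of its own: it cites the standard textbook facts that characteristic-$0$ fields are perfect and that a field of characteristic $p$ in which every element has a $p$-th root is perfect, and the usual proofs of those facts are exactly your derivative criterion combined with the Frobenius $p$-th power argument. The one point you leave implicit is that $\mu_b$ splits into linear factors over the algebraically closed field $\mathsf{acl}(\A)$, so having no repeated root is equivalent to having $\mathsf{deg}(\mu_b)$ distinct roots there.
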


We say that an algebra $\A$ is a \emph{reduct} of an algebra $\B$ when there exists a sublanguage $\L$ of the language of $\B$ such that $\A = \langle B; \{ f^\B : f \in \L \} \rangle$. In this case, we also say that $\B$ is an \emph{expansion} of $\A$. A \emph{field expansion} is an expansion of a field. The \emph{characteristic} of a field expansion $\A$ is the characteristic of the field reduct of $\A$.

We will focus on field expansions obtained adding the following operations. Given a field $\A$, let $(\,)^* \colon A \to A$ be the map defined for every $a \in A$ as
\begin{equation}\label{Eq : weak inverses def}
 a^* = \begin{cases*}
                    a^{-1} & if  $a \ne 0$;  \\
                     0 & if $a = 0$.
                 \end{cases*}
\end{equation}
The element $a^*$ is said to be a \emph{weak inverse} of $a$. Moreover, if $\A$ is weakly rooted field and $p$ prime, let $r_p \colon A \to A$ be the map defined for every $a \in A$ as
\begin{equation}\label{Eq : weak prime roots}
 r_p(a) = \begin{cases*}
                    \sqrt[p]{a} & if  $\A$ has characteristic $p$;  \\
                     0 & otherwise.
                 \end{cases*}
\end{equation}
We call $r_p(a)$ the \emph{weak $p$-root} of $a$. When convenient, we write $a^{*\A}$ and $r_p^\A(a)$ (as opposed to $a^*$ and $r_p(a)$ only) to stress that these elements are computed in $\A$.

\begin{Definition}
An algebra $\langle A; +, \cdot, -, 0, 1, (\,)^* \rangle$ is said to be a \emph{zero-totalized field} when $\langle A; +, \cdot, -, 0, 1 \rangle$ is a field and $(\,)^*$ the unary operation in \eqref{Eq : weak inverses def}.
                 The class of zero-totalized fields will be denoted by $\mathsf{ZTF}$. A zero-totalized field is \emph{weakly rooted} when its field reduct is weakly rooted.  
\end{Definition}

In the context of zero-totalized weakly rooted fields, the operations $r_p$ 
can be defined equationally, as we proceed to illustrate. With every prime $p$ we associate the equation
\[
\mathsf{root}_p(x, y) = y^p \thickapprox  (1-p^*p)x
\]
in the language of $\mathsf{ZTF}$. Observe that the expression $1-p^*p$ equals $1$ in zero-totalized fields of characteristic $p$, and equals $0$ otherwise.
\begin{Proposition}\label{Prop : graph}
Let $\A \in \mathsf{ZTF}$ be weakly rooted.\ For every
prime $p$ and $a, b \in A$,
\[
\A \vDash \mathsf{root}_p(a, b) \iff b = r_p(a).
\]
\end{Proposition}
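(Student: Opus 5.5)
We argue by cases on the characteristic of $\A$, after first computing the value of the coefficient $1 - p^*p$ in $\A$. Here $p$ denotes the term $p1$, i.e., the sum of $p$ copies of the unit. The paper already notes that $1-p^*p$ equals $1$ in characteristic $p$ and $0$ otherwise; we justify this from the definition of the weak inverse in \eqref{Eq : weak inverses def}. If $\A$ has characteristic $p$, then $p1 = 0$ in $\A$, so $p^* = 0$ and hence $1-p^*p = 1$. If instead the characteristic of $\A$ is not $p$, then $p1 \neq 0$ in $\A$. Indeed, by \cref{Prop : characteristic field prime or zero} the characteristic is either $0$ or a prime $q \neq p$, and in the latter case $p1 = 0$ would force $q \mid p$, which is impossible. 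Therefore $p^* = p^{-1}$, so $p^*p = 1$ and $1-p^*p = 0$.

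\emph{Case 1: $\A$ has characteristic $p$.} Here $\mathsf{root}_p(a,b)$ reduces to $b^p = a$. Since $\A$ is weakly rooted, $\sqrt[p]{a}$ exists, and by \eqref{Eq : weak prime roots} it equals $r_p(a)$. So the equation says exactly that $b$ is a $p$-th root of $a$. It remains to use uniqueness of $p$-th roots in characteristic $p$, which the paper recalls. For completeness: if $b^p = c^p$, then $(b-c)^p = b^p - c^p = 0$ by the Frobenius identity, and since a field has no nonzero nilpotents, $b = c$. Hence $\A \vDash \mathsf{root}_p(a,b)$ holds if and only if $b = \sqrt[p]{a} = r_p(a)$.

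\emph{Case 2: the characteristic of $\A$ is not $p$.} Here $\mathsf{root}_p(a,b)$ reduces to $b^p = 0$, and $r_p(a) = 0$ by definition. In a field, $b^p = 0$ holds if and only if $b = 0$, because a field has no zero divisors. Thus again $\A \vDash \mathsf{root}_p(a,b)$ holds if and only if $b = r_p(a)$.

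No step is a genuine obstacle. The only points needing care are the computation of $p^*$ in each characteristic, including the check that $p1 \neq 0$ when the characteristic differs from $p$, and the uniqueness of $p$-th roots via Frobenius.
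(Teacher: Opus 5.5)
Your proof is correct and follows the paper's argument: the same case split on whether $\A$ has characteristic $p$, reducing $\mathsf{root}_p(a,b)$ to $b^p = a$ (settled by uniqueness of $p$-th roots) or to $b^p = 0$ (settled by the absence of zero divisors). You additionally justify the value of $1-p^*p$ in each case and prove uniqueness of $p$-th roots via Frobenius, both of which the paper takes as known or cites.
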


\begin{proof}
We have two cases depending on whether $\A$ has characteristic $p$ or not (see \cref{Prop : characteristic field prime or zero}). First, suppose that $\A$ has characteristic $p$. We have
\[
\A \vDash  \mathsf{root}_p(a, b) \iff b^p = (1-p^*p)a
\iff b^p = a \iff b = \sqrt[p]{a} \iff b = r_p(a),  
\]
where the first equivalence above holds by the definition of the equation $\mathsf{root}_p$,  the second by the assumption that $\A$ has characteristic $p$, 
the third is straightforward, and the fourth holds by the definition of a weak $p$-root.

Lastly, we consider the case where $\A$ does not have characteristic $p$.\ We have
\[
\A \vDash \mathsf{root}_p(a, b) \iff b^p = (1-p^*p)a
\iff b^p = 0 \iff b = 0,  
\]
where the first equivalence above holds by the definition of the equation $\mathsf{root}_p$, the second because  $\A$ does not have characteristic $p$, 
and the third because fields are integral domains.
\end{proof}

For the present purpose, the most important field expansions are the following.

\begin{Definition}
An \emph{implicitly closed field} is an algebra  $\langle A; +, \cdot, -, 0, 1, (\,)^*, \{ r_p : \text{$p$ is prime}\}\rangle$, where $\langle A; +, \cdot, -, 0, 1, (\,)^*\rangle$ is a zero-totalized weakly rooted field and the $r_p$'s are the unary operations in~\eqref{Eq : weak prime roots}.
                  The class of implicitly closed fields will be denoted by $\mathsf{ICF}$.

\end{Definition}

\begin{Definition}
We denote by $\A^+$ the field expansion of a weakly rooted field $\A$ obtained by adding the operation $(\,)^*$ as well as all the $r_p$'s for $p$ prime.
\end{Definition}

From \cref{Prop : ACF are weakly rooted} we deduce the following.

\begin{Proposition}\label{Prop : acl(A) in ICF}
Let $\A$ be a field. Then  $\mathsf{acl}(\A)^+$ is an implicitly closed field.
\end{Proposition}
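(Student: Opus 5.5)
The plan is to unfold the definitions and reduce the claim to \cref{Prop : ACF are weakly rooted}. Fix a field $\A$. By \cref{Thm : existence of ACL}, its algebraic closure $\mathsf{acl}(\A)$ exists. It is in particular a field, and it is algebraically closed by definition. Applying \cref{Prop : ACF are weakly rooted} to it shows that $\mathsf{acl}(\A)$ is weakly rooted. This already guarantees that the expansion $\mathsf{acl}(\A)^+$ is defined, since that construction was only introduced for weakly rooted fields.

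Next I check that $\mathsf{acl}(\A)^+$ satisfies the definition of an implicitly closed field, clause by clause. Its ring reduct is $\mathsf{acl}(\A)$, which is a weakly rooted field. The operation $(\,)^*$ that $\mathsf{acl}(\A)^+$ adds is exactly the map in \eqref{Eq : weak inverses def}. Hence the reduct $\langle \mathsf{acl}(\A); +, \cdot, -, 0, 1, (\,)^*\rangle$ is a zero-totalized field, and it is weakly rooted because its field reduct is. Likewise, the operations $r_p$ for $p$ prime are exactly the maps in \eqref{Eq : weak prime roots}. Both are well defined: weak inverses exist in any field, and each $\sqrt[p]{a}$ exists in characteristic $p$ by weak rootedness and is unique. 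Together these facts show that $\mathsf{acl}(\A)^+ \in \mathsf{ICF}$.

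There is no real obstacle here. The only step needing any care is making sure the $r_p$ are total, well-defined functions. Existence of $\sqrt[p]{a}$ comes from the root of $x^p - a$ supplied in the proof of \cref{Prop : ACF are weakly rooted}, and uniqueness comes from the fact recalled before the definition of weakly rooted fields.
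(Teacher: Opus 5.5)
Your proposal is correct and takes the same route as the paper, which simply deduces the statement from \cref{Prop : ACF are weakly rooted}: $\mathsf{acl}(\A)$ is algebraically closed, hence weakly rooted, so by definition its expansion $\mathsf{acl}(\A)^+$ is an implicitly closed field. Your clause-by-clause check of the definition just spells out what the paper leaves implicit.
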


The next observation will be needed later on.

\begin{Proposition}\label{Prop : ICF closed under S and Pu}
We have that $\mathsf{WRF}$ and $\mathsf{ICF}$ are elementary classes. Moreover, $\mathsf{WRF} = \PPU(\mathsf{WRF})$ and $\mathsf{ICF} = \III\SSS\PPU(\mathsf{ICF})$. 
\end{Proposition}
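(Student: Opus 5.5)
The plan is to give explicit first order axiomatizations and then use the \L{}o\'s Theorem for the ultraproduct closure, checking subalgebra closure of $\mathsf{ICF}$ by hand.

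\emph{Elementarity.} $\mathsf{WRF}$ is axiomatized by the field axioms together with, for every prime $p$, the sentence
\[
p1 \thickapprox 0 \to \forall x\,\exists y\,(y^p \thickapprox x).
\]
Since a field has characteristic $p$ exactly when $p1=0$, a field satisfies all of these sentences if and only if it has characteristic $0$, or has prime characteristic $p$ and every element has a $p$-th root. For $\mathsf{ICF}$, in the expanded language we take:
\begin{itemize}
\item the field axioms;
\item the sentences axiomatizing $\mathsf{WRF}$ above;
\item the sentences $x \not\thickapprox 0 \to x x^* \thickapprox 1$ and $0^* \thickapprox 0$;
\item the equations $\mathsf{root}_p(x, r_p(x))$, that is, $(r_p(x))^p \thickapprox (1-p^*p)x$, for every prime $p$.
\end{itemize}
The first three items say that the reduct is a zero-totalized weakly rooted field. \cref{Prop : graph} then shows that the last item forces $r_p$ to be the weak $p$-root. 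Hence the models are exactly the implicitly closed fields.

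\emph{Ultraproducts.} Every elementary class is closed under ultraproducts and isomorphisms by \L{}o\'s's Theorem. This gives $\PPU(\mathsf{WRF}) \subseteq \mathsf{WRF}$ and $\PPU(\mathsf{ICF}) \subseteq \mathsf{ICF}$ (and also $\III\PPU$). The reverse inclusions are trivial, since every $\A$ is isomorphic to an ultraproduct of copies of itself over a principal ultrafilter. We may read $\mathsf{WRF}=\PPU(\mathsf{WRF})$ up to isomorphism, as is customary.

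\emph{Subalgebras.} It remains to show $\SSS(\mathsf{ICF}) \subseteq \mathsf{ICF}$; closure under $\III$ is immediate. Let $\B \leq \A \in \mathsf{ICF}$.
\begin{enumerate}
\item $\B$ is a commutative subring of the field reduct of $\A$ containing $0,1$, and nontrivial since $0 \neq 1$ in $\A$.
\item $\B$ is closed under $(\,)^*$, so every nonzero $b \in B$ has $b^* = b^{-1} \in B$. Hence the ring reduct of $\B$ is a field, and $(\,)^*$ restricts to its weak inverse.
\item $\B$ has the same characteristic as $\A$, since $p1$ is computed identically in both.
\item If this characteristic is a prime $p$, then for $b \in B$ we have $r_p^\A(b) \in B$ with $(r_p^\A(b))^p = b$. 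So $\B$ is weakly rooted, and $r_p^\B$ is the weak $p$-root in $\B$ by uniqueness of $p$-th roots.
\item For primes $q$ different from the characteristic, $r_q^\B = 0$ as required.
\end{enumerate}
Thus $\B \in \mathsf{ICF}$.

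The only mildly delicate step is the subalgebra closure, where one must check that the operations of the subalgebra coincide with the intrinsically defined weak inverse and weak root. This follows from uniqueness of inverses and of $p$-th roots.
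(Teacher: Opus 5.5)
Your proof is correct. For $\mathsf{WRF}$ and for closure under $\PPU$ it is the paper's argument: explicit first order axioms, plus preservation of first order sentences under ultraproducts.

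The real difference is how you obtain closure of $\mathsf{ICF}$ under $\SSS$. The paper axiomatizes $\mathsf{ICF}$ by \emph{universal} formulas only. These are the commutative ring equations together with $x \thickapprox 0 \to x^* \thickapprox 0$, $x \not\thickapprox 0 \to x^*x \thickapprox 1$, $p \thickapprox 0 \to x \thickapprox (r_p(x))^p$, and $p \not\thickapprox 0 \to r_p(x) \thickapprox 0$. Here $(\,)^*$ and $r_p$ act as Skolem functions for the existential quantifiers in the field and weak-rootedness axioms. Closure under $\III$, $\SSS$ and $\PPU$ then follows in one stroke from the preservation theorem for universal classes.

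Your list keeps $\forall\exists$ sentences: the inverse axiom inside ``the field axioms'' and the $\mathsf{WRF}$ root sentences. So that theorem is not directly available, and you check subalgebra closure by hand instead. Your five-step verification is exactly the semantic content of the preservation theorem in this instance, and it is fine.

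Those $\forall\exists$ sentences are in fact redundant. The axiom $x \not\thickapprox 0 \to xx^* \thickapprox 1$ already supplies inverses. In characteristic $p$ we have $p^* = 0^* = 0$, so $\mathsf{root}_p(x, r_p(x))$ already gives $(r_p(x))^p \thickapprox x$. Dropping them leaves a universal axiomatization, and you could then conclude exactly as the paper does.

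Incidentally, your explicit use of genuine field axioms is slightly more careful than the paper. Its displayed lists omit $0 \not\thickapprox 1$, so the one-element algebra satisfies them even though fields are nontrivial by definition. Adding this quantifier-free sentence fixes the omission without affecting universality.
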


\begin{proof}
The class $\mathsf{WRF}$ can be axiomatized by adding the following first order formulas to the equational axioms of commutative rings:
\[
\forall x (x \not \thickapprox 0 \to \exists y (xy \thickapprox 1)) \, \, \text{ and }\, \, 
p \thickapprox 0 \to \forall x \exists y (x \thickapprox  y^p) \text{ for all primes }p.
\]
Therefore, $\mathsf{WRF}$ is an elementary class. Since every elementary class is closed under $\PPU$ (see, e.g., \cite[Thm.\ V.2.16]{BuSa00}), it follows that so is $\mathsf{WRF}$.  As the inclusion $\mathsf{WRF} \subseteq \PPU(\mathsf{WRF})$ is straightforward, we conclude that $\mathsf{WRF} = \PPU(\mathsf{WRF})$.

The class $\mathsf{ICF}$ can be axiomatized by adding the following formulas to the equational axioms of commutative rings: for all prime $p$,
\begin{align*}
    \forall x (x \thickapprox 0 \to x^* \thickapprox 0), \qquad &\forall x (x \not \thickapprox 0 \to x^* x \thickapprox 1), \\
\forall x (p \thickapprox 0 \to x \thickapprox (r_p(x))^p), \qquad &\forall x  (p \not \thickapprox 0 \to r_p(x) \thickapprox 0).    
\end{align*}
Thus, $\mathsf{ICF}$ is an elementary class.
Since every class of algebras that can be axiomatized by first order formulas of the form $\forall x_1, \dots, x_n \varphi$ with $\varphi$ quantifier-free is closed under $\III, \SSS$, and $\PPU$ (see, e.g., \cite[Thm.\ V.2.20]{BuSa00}), so is $\mathsf{ICF}$. 
As the inclusion $\mathsf{ICF} \subseteq \III\SSS\PPU(\mathsf{ICF})$ is straightforward, we conclude that $\mathsf{ICF} = \III\SSS\PPU(\mathsf{ICF})$. 
\end{proof}

\subsection{Implicitly closed meadows}\label{Sec : Q not V} We begin by recalling the following concepts.
\begin{Definition}
A class of similar algebras is 
\benroman
\item a \emph{quasivariety} when it is closed under $\III, \SSS, \PPP$, and $\PPU$ or, equivalently, when it can be axiomatized by a set of \emph{quasiequations}, that is, formulas of the form $\bigsqcap \Phi \to \varphi$, where $\Phi \cup \{ \varphi \}$ is a finite set of equations and $\bigsqcap$ the conjunction symbol (see, e.g.,  \cite[Thm.\ V.2.25]{BuSa00} due to Maltsev);
\item a \emph{variety} when it is closed under $\HHH, \SSS$, and $\PPP$ or, equivalently, when it can be axiomatized by a set of equations (see, e.g., \cite[Thm.\ II.11.9]{BuSa00}  due to Birkhoff).
\eroman
\end{Definition}
Observe that every variety is a quasivariety because every equation $\varphi$ is equivalent to the quasiequation $\bigsqcap \emptyset \to \varphi$. However, the converse does not need to hold in general: for example, $\mathsf{RCR}$ is a quasivariety  axiomatized by the equations of commutative rings together with the quasiequation $x^2 \thickapprox 0 \to x \thickapprox 0$, but it is not a variety because it is not closed under $\HHH$. To prove the latter, observe that  the ring of integers $\mathbb{Z}$ is reduced and $\mathbb{Z}/4\mathbb{Z} \in \HHH(\mathbb{Z})$ is not, for $2^2 = 0$ in $\mathbb{Z}/4\mathbb{Z}$. 

For every class of similar algebras $\K$ there exist the least quasivariety and the least variety containing $\K$, which we denote by $\QQQ(\K)$ and $\VVV(\K)$, respectively. By theorems of Maltsev and Tarski, respectively, the latter can be described as follows (see, e.g.,  \cite[Thms.\ V.2.25 and II.9.5]{BuSa00}).

\begin{Theorem}\label{Thm : Q(K) and V(K)}
Let $\K$ be a class of similar algebras. Then 
\[
\QQQ(\K) = \III\SSS\PPP\PPU(\K) \, \, \text{ and }\, \, \VVV(\K) = \HHH\SSS\PPP(\K).
\]
\end{Theorem}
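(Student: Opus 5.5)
The statement combines two classical facts, Maltsev's description of $\QQQ(\K)$ and Tarski's description of $\VVV(\K)$. I would prove each by showing that the proposed class contains $\K$, is closed under the relevant operators, and is contained in every class of that kind containing $\K$.

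\textbf{The variety $\VVV(\K)$.} Clearly $\K \subseteq \HHH\SSS\PPP(\K)$. Since $\HHH,\SSS,\PPP$ preserve membership in any variety, we have $\HHH\SSS\PPP(\K) \subseteq \VVV(\K)$. For the converse inclusion it suffices to show that $\HHH\SSS\PPP(\K)$ is closed under $\HHH$, $\SSS$ and $\PPP$. This rests on the standard operator inequalities
\[
\SSS\HHH \leq \HHH\SSS, \qquad \PPP\HHH \leq \HHH\PPP, \qquad \PPP\SSS \leq \SSS\PPP,
\]
together with the idempotence of $\HHH$, $\SSS$ and $\PPP$ up to $\III$. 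Each inequality is a routine verification:
\begin{itemize}
\item for $\SSS\HHH \leq \HHH\SSS$, take preimages of subalgebras under a surjective homomorphism;
\item for $\PPP\HHH \leq \HHH\PPP$, take the product of the surjections;
\item for $\PPP\SSS \leq \SSS\PPP$, note that a product of subalgebras is a subalgebra of the product.
\end{itemize}
Chaining these gives
\[
\HHH\HHH\SSS\PPP = \HHH\SSS\PPP, \qquad \SSS\HHH\SSS\PPP \leq \HHH\SSS\PPP, \qquad \PPP\HHH\SSS\PPP \leq \HHH\PPP\SSS\PPP \leq \HHH\SSS\PPP\PPP = \HHH\SSS\PPP.
\]

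\textbf{The quasivariety $\QQQ(\K)$.} The inclusion $\III\SSS\PPP\PPU(\K) \subseteq \QQQ(\K)$ is again immediate, because quasivarieties are closed under all four operators. For the reverse inclusion I would not verify closure under $\PPU$ directly, since commuting $\PPU$ past $\PPP$ is awkward. Instead I would use the syntactic characterization of quasivarieties from the previous definition. I will show that every algebra $\A$ satisfying all quasiequations true in $\K$ lies in $\III\SSS\PPP\PPU(\K)$.

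Fix such an $\A$ and consider its positive diagram. For each pair of distinct elements $a \neq b$ of $A$, I want some $\C_{a,b} \in \PPU(\K)$ and a homomorphism $h_{a,b} \colon \A \to \C_{a,b}$ with $h_{a,b}(a) \neq h_{a,b}(b)$. Then the induced map $\A \to \prod \C_{a,b}$ is an embedding. The construction goes as follows.
\begin{enumerate}
\item For every finite set $\Phi$ of atomic facts of $\A$, written as equations in constants naming elements of $A$, the quasiequation $\bigsqcap\Phi \to a \thickapprox b$ fails in $\A$.
\item Hence it fails in some $\B_\Phi \in \K$, witnessed by an assignment satisfying $\Phi$ while separating the images of $a$ and $b$.
\item Take an ultraproduct of the $\B_\Phi$ over an ultrafilter on the finite subsets of the diagram that contains all the cones $\{\Phi : \Phi \supseteq \Phi_0\}$.
\item By Łoś's theorem, the assignments glue into a homomorphism from $\A$ that still separates $a$ and $b$.
\end{enumerate}

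\textbf{Expected obstacle.} The main difficulty is this compactness-style ultraproduct step. The bookkeeping needs care: the diagram is indexed by all elements of $A$, every operation equation $f(a_1,\dots,a_n) = c$ must be included so that the glued map is a homomorphism, and the trivial algebra case must be handled via the empty product. Everything else is routine operator algebra.
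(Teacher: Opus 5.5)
Your proposal is correct. The paper does not prove this theorem itself but cites Burris--Sankappanavar (Thms.~V.2.25 and II.9.5), and you give the standard textbook argument behind those citations: the operator inequalities $\SSS\HHH \leq \HHH\SSS$, $\PPP\HHH \leq \HHH\PPP$, $\PPP\SSS \leq \SSS\PPP$ for $\VVV(\K)$, and for $\QQQ(\K)$ the positive-diagram argument that separates points by homomorphisms into ultraproducts. The one step you leave implicit is that the models of all quasiequations valid in $\K$ form a class closed under $\III$, $\SSS$, $\PPP$, $\PPU$ and hence contain $\QQQ(\K)$; this is the easy direction of the equivalence stated in the paper's definition of a quasivariety, and it closes your chain of inclusions.
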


The following structures will play a fundamental role in this paper.

\begin{Definition}
We write $\mathsf{ICM}$ as a shorthand for $\III\SSS\PPP(\mathsf{ICF})$. An \emph{implicitly closed meadow} is a member of $\mathsf{ICM}$.
\end{Definition}

With every prime $p$ we associate the equation
   \[
  \mathsf{root}_p(x,r_p(x)) = (r_p(x))^p \thickapprox  (1-p^*p)x
   \]
in the language of $\mathsf{ICM}$. Our first goal will be to axiomatize $\mathsf{ICM}$. For this, we recall that the class of commutative rings can be axiomatized by equations.

\begin{Theorem}\label{Thm : ICM axioms}
    $\mathsf{ICM}$ is a variety axiomatized by the equations
    \[
    x \thickapprox  x^2 x^*, \quad x \thickapprox x^{**}, \quad   \mathsf{root}_p(x,r_p(x))
    \]
for each prime $p$, together with    the axioms of commutative rings.
\end{Theorem}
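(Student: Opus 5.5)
Let $\mathsf{V}$ denote the variety axiomatized by the equations in the statement together with the axioms of commutative rings. The plan is to prove $\mathsf{ICM} = \mathsf{V}$ by establishing both inclusions.

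\textbf{The inclusion $\mathsf{ICM} \subseteq \mathsf{V}$.} Since varieties are closed under $\III$, $\SSS$ and $\PPP$, it suffices to check $\mathsf{ICF} \subseteq \mathsf{V}$. So let $\A \in \mathsf{ICF}$ and $a \in A$.
\begin{itemize}
\item The ring axioms hold trivially.
\item If $a \neq 0$, then $a^2a^* = a^2 a^{-1} = a$ and $a^{**} = (a^{-1})^{-1} = a$. If $a = 0$, both equations reduce to $0 = 0$.
\item The equation $\mathsf{root}_p(x, r_p(x))$ holds in $\A$ by \cref{Prop : graph}, applied with $b = r_p(a)$.
\end{itemize}

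\textbf{The inclusion $\mathsf{V} \subseteq \mathsf{ICM}$.} This is the main work. Take $\A \in \mathsf{V}$. The strategy is to embed $\A$ into a product of members of $\mathsf{ICF}$, via quotients by prime ideals of the ring reduct.

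First, $\A$ is reduced: if $a^2 = 0$, then $a = a^2 a^* = 0$.

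Second, every ring ideal $I$ is automatically a congruence for $(\,)^*$ and for each $r_p$. The key identities are as follows.
\begin{itemize}
\item The element $e_a = aa^*$ is idempotent, since $e_a^2 = a(aa^*a^*) = a\cdot a^*$, using $a^* = a^{*2}a^{**} = a^{*2}a$.
\item Moreover $a^* = a^{*2}a$ and $a = a^2a^*$.
\end{itemize}
From these, if $a - b \in I$, one derives $a^* - b^* \in I$. The standard meadow argument writes
\[
a^* - b^* = a^{*}b^{*}(b - a) + a^*(1 - bb^*) - b^*(1 - aa^*)
\]
and then handles the idempotent terms using $(1-aa^*)a = 0$. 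This is the step I expect to be the main obstacle, and I would try to verify that particular identity first. An alternative that is cleaner may be to work only with prime ideals $P$. In that case $(1-aa^*)a = 0$ forces $a \in P$ or $aa^* \equiv 1$, and in either case the value of $a^*$ modulo $P$ is determined by $a$ modulo $P$.

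Third, compatibility with $r_p$ modulo a prime $P$. Let $q = p1 \in A$.
\begin{itemize}
\item If $q \in P$, then $q^* = q^{*2}q \in P$, so modulo $P$ we get $r_p(a)^p \equiv a$. Since $\A/P$ has characteristic $p$, Frobenius is injective on the domain $\A/P$, so $r_p(a)$ modulo $P$ is determined by $a$ modulo $P$.
\item If $q \notin P$, then $q^*q \equiv 1$, so $r_p(a)^p \in P$, and hence $r_p(a) \in P$ by primality.
\end{itemize}
Thus $P$ is a congruence of $\A$, and the quotient $\A/P$ is a ring domain in which every nonzero $a$ has $aa^* = 1$, because $a(1-aa^*) = 0$. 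Hence $\A/P$ is a field. By the above, it is weakly rooted, with $*$ interpreted as the weak inverse and $r_p$ as the weak $p$-root. Therefore $\A/P \in \mathsf{ICF}$.

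Finally, since $\A$ is reduced, the intersection of all prime ideals of $\A$ is its nilradical, which is $\{0\}$. Hence the natural map $\A \to \prod_P \A/P$ is an injective homomorphism in the full language, and so $\A \in \III\SSS\PPP(\mathsf{ICF}) = \mathsf{ICM}$.
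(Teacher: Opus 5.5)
Your proof is correct, but for the inclusion $\mathsf{V} \subseteq \mathsf{ICM}$ it takes a different route from the paper. The paper never works with prime ideals here. It cites the Bergstra--Hirshfeld--Tucker theorem that $x \thickapprox x^2x^*$ and $x \thickapprox x^{**}$ axiomatize $\mathsf{M} = \III\SSS\PPP(\mathsf{ZTF})$. So the meadow reduct of $\A$ embeds into a product of zero-totalized fields $\B_i$. The paper then enlarges each $\B_i$ to a weakly rooted field $\C_i$ and expands it to an implicitly closed field. Finally, it checks that the same embedding preserves every $r_p$: it projects $\mathsf{root}_p(a, r_p(a))$ onto each coordinate and uses the uniqueness of solutions from \cref{Prop : graph}.

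You instead reprove the meadow representation theorem from scratch and treat the $r_p$'s at the same time. Each prime ideal $P$ of the ring reduct is compatible with $(\,)^*$ and with every $r_p$. Each quotient $\A/P$ is already an implicitly closed field, since the required $p$-th roots are supplied by $r_p$ itself, so no passage to a larger weakly rooted field is needed. Krull's theorem that the primes of a reduced ring intersect in $\{0\}$ then yields the subdirect embedding.

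Your approach is more self-contained and carries extra information. The factors can be taken to be the quotients $\A/P$, which anticipates $\mathsf{ICM}_\textsc{fsi} = \mathsf{ICF}$ and the role of $\mathsf{Spec}$ in the later description of dominions. The paper's approach is shorter and modular: given the known meadow result, the only new work concerns the operations $r_p$.

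Your tentative aside about arbitrary ideals is not needed, since the prime-ideal argument suffices. It is nonetheless true. From $b = b^2b^*$ you get $a(1-bb^*) = (a-b)(1-bb^*)$. Hence $a^*(1-bb^*) = a^{*2}(a-b)(1-bb^*) \in I$, and symmetrically for $b^*(1-aa^*)$.
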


The next concept will be instrumental for this purpose.

\begin{Definition}
We write $\mathsf{M}$ as a shorthand for $\III\SSS\PPP(\mathsf{ZTF})$. A \emph{meadow} is a member of $\mathsf{M}$ (see \cite[Sec.\ 3.2]{MedBHT}).
\end{Definition}

\begin{Theorem}[\protect{\cite{MedBHT}}]\label{Thm : meadows axioms}
$\mathsf{M}$ is a variety axiomatized by the equations
    \[
    x \thickapprox x^2 x^* \, \, \text{ and }\, \, x \thickapprox x^{**},
    \]
 together with the axioms of commutative rings.
\end{Theorem}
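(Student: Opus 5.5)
We will write $\mathsf{V}$ for the variety axiomatized by the axioms of commutative rings together with $x \thickapprox x^2x^*$ and $x \thickapprox x^{**}$, and prove $\mathsf{M} = \mathsf{V}$. Since $\mathsf{V}$ is a variety by definition, this gives the statement. The plan is a soundness direction and a subdirect representation via prime ideals.

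For $\mathsf{M} \subseteq \mathsf{V}$, it suffices to check that every zero-totalized field satisfies the two equations. Equations are preserved by $\III$, $\SSS$ and $\PPP$, so $\III\SSS\PPP(\mathsf{ZTF}) \subseteq \mathsf{V}$ follows. The check splits into two cases. If $a \neq 0$, then $a^2a^{-1} = a$ and $(a^{-1})^{-1} = a$, where $a^{-1} \neq 0$. If $a = 0$, both sides are $0$.

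For $\mathsf{V} \subseteq \mathsf{M}$, let $\A \in \mathsf{V}$. I would first record three consequences of the axioms.
\begin{itemize}
\item The ring reduct is reduced: if $a^2 = 0$, then $a = a^2a^* = 0$.
\item Applying the first axiom to $a^*$ and then using $a^{**} = a$ gives $a^* = (a^*)^2 a$.
\item From $a = a^2a^*$ we get $a(1 - aa^*) = 0$.
\end{itemize}
Next, for a prime ideal $I$ of the ring reduct, $\A/I$ is an integral domain by \cref{Prop : prime ideals and integral domains}. Write $\bar a$ for the image of $a$ and $\overline{a^*}$ for the image of $a^*$. We obtain the following.
\begin{itemize}
\item If $\bar a \neq 0$, then $\bar a(1 - \bar a\,\overline{a^*}) = 0$ forces $\bar a\,\overline{a^*} = 1$, so $\overline{a^*} = \bar a^{-1}$. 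In particular $\A/I$ is a field.
\item If $\bar a = 0$, then $\overline{a^*} = \overline{a^*}^{\,2}\,\bar a = 0$.
\end{itemize}

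The key step, and the only real obstacle, is that the ring congruence induced by $I$ is also compatible with ${}^*$. This is exactly what the case analysis above yields. The class $\overline{a^*}$ is determined by $\bar a$ alone: it is $\bar a^{-1}$ when $\bar a \neq 0$ and $0$ otherwise. Hence $a - b \in I$ implies $a^* - b^* \in I$. So $\A/I$, endowed with the induced operation, is a quotient of $\A$ in the full language, and it is a zero-totalized field. It is nontrivial because prime ideals are proper.

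Finally, consider the map $\A \to \prod\{\A/I : I \text{ a prime ideal of } \A\}$ sending $a$ to $\langle a+I : I \rangle$. It is a homomorphism in the language with ${}^*$, since each coordinate projection is. Its kernel is the intersection of all prime ideals, which is the nilradical. The nilradical is $\{0\}$ because $\A$ is reduced: $a^n = 0$ implies $a = 0$ by repeated squaring. Hence the map is an embedding and $\A \in \III\SSS\PPP(\mathsf{ZTF}) = \mathsf{M}$. The trivial algebra is covered too: it has no prime ideals and is isomorphic to the empty product.
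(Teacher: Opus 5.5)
Your proof is correct. The paper gives no proof of this statement; it imports it from \cite{MedBHT}. So your argument is a self-contained replacement rather than a reconstruction.

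The steps all check out:
\begin{enumerate}
\item The ring reduct is reduced, by $x \thickapprox x^2x^*$.
\item Applying the first axiom to $x^*$ and using $x^{**} \thickapprox x$ gives $x^* \thickapprox (x^*)^2x$. This is exactly where the second axiom is needed.
\item Your case analysis shows that $\overline{a^*}$ depends only on $\bar a$. Hence $\{\langle a,b\rangle : a-b \in I\}$ is a congruence for ${}^*$, and $\A/I$ is a zero-totalized field, nontrivial because $I$ is proper.
\item By Krull's theorem the prime ideals intersect in the nilradical, which is $\{0\}$ for a reduced ring.
\end{enumerate}
Your treatment of the trivial algebra as the empty product is also right.

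A useful comparison is with the paper's proof of the analogous \cref{Thm : ICM axioms}. There, the authors take an arbitrary embedding into a product of fields. They then show the extra operations are preserved coordinatewise, because the defining equation has a unique solution in each such field (\cref{Prop : graph}).

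The same shortcut works here:
\begin{enumerate}
\item Your second consequence, with the first axiom, says $\A \vDash \mathsf{inv}(a, a^*)$.
\item The argument in \cref{Claim : extendability} works verbatim in any field, not only weakly rooted ones. It shows that $\mathsf{inv}(c,d)$ forces $d = c^*$.
\item Hence any ring embedding of the reduced ring reduct of $\A$ into a product of fields automatically preserves ${}^*$. Such an embedding exists by $\mathsf{RCR} = \III\SSS\PPP(\mathsf{F})$.
\end{enumerate}

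Your compatibility step is in effect this uniqueness argument, carried out inside the integral domain $\A/I$.

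What each route buys:
\begin{itemize}
\item The shortcut is shorter and fits the paper's view of ${}^*$ as a pp-definable implicit operation of $\mathsf{RCR}$.
\item Your route does not quote $\mathsf{RCR} = \III\SSS\PPP(\mathsf{F})$, though Krull's theorem is essentially the same input. It also gives more: an explicit subdirect representation by the quotients $\A/I$. In passing, it shows that every prime ideal of a meadow is maximal.
\end{itemize}
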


We are now ready to prove \cref{Thm : ICM axioms}.

\begin{proof}
We begin by proving that $\mathsf{ICM}$ satisfies the axioms in the statement. Recall that $\mathsf{ICM} = \III\SSS\PPP(\mathsf{ICF})$ by definition
and observe that $\mathsf{ICF}$ satisfies the (equational) axioms of commutative rings as well as the equations $x \thickapprox  x^2x^*$ and $x \thickapprox x^{**}$. Therefore, so does $\mathsf{ICM}$ because the validity of equations is preserved by $\III, \SSS$, and $\PPP$.
Similarly, to prove that $\mathsf{ICM}$ satisfies the equation $\mathsf{root}_p(x,r_p(x))$ for a prime $p$, it will be enough to show that so does $\mathsf{ICF}$. Then, consider $\A \in \mathsf{ICF}$ and $a \in A$. As $\A$ has a zero-totalized field reduct that is weakly rooted, from \cref{Prop : graph} it follows that $\A \vDash \mathsf{root}_p(a, r_p(a))$. 

Next, let $\Sigma$ be the set of axioms in the statement. We will prove that every algebra $\A = \langle A; +, \cdot, -, 0, 1, (\,)^*, \{ r_p : \text{$p$ is prime}\}\rangle$ satisfying  $\Sigma$  belongs to $\mathsf{ICM}$.
To this end, consider an algebra $\A$ as above satisfying $\Sigma$. In view of \cref{Thm : meadows axioms}, $\A$ has a meadow reduct $\A_\mathsf{m}$. Therefore, $\A_\mathsf{m} \in \mathsf{M} = \III\SSS\PPP(\mathsf{ZTF})$. Then, there exist $\{ \B_i : i \in I \} \subseteq \mathsf{ZTF}$ and an embedding $h \colon \A_\mathsf{m} \to \prod_{i \in I}\B_i$. By \cref{Cor : every field extend to a rooted one} each $\B_i$ extends to a weakly rooted $\C_i \in \mathsf{ZTF}$. Since $\B_i \leq \C_i$ for each $i \in I$, we can view $h$ as an embedding $h \colon \A_\mathsf{m} \to \prod_{i \in I}\C_i$. Furthermore, as each $\C_i$ is weakly rooted, it is the meadow reduct of some $\D_i \in \mathsf{ICF}$. Therefore, in order to conclude the proof, it only remains to show that $h \colon \A \to \prod_{i \in I}\D_i$ is also an embedding, for in this case we would have $\A \in \III\SSS\PPP(\{ \D_i : i \in I\}) \subseteq \III\SSS\PPP(\mathsf{ICF}) = \mathsf{ICM}$, as desired.

First, observe that $h \colon  \A \to \prod_{i \in I}\D_i$ is a well-defined injective map that  preserves all the basic operations of meadows because $h \colon \A_\mathsf{m} \to \prod_{i \in I}\C_i$ is an embedding from the meadow reduct of $\A$ to the meadow reduct of $\prod_{i \in I}\D_i$. Therefore, to show that $h$ is an embedding of implicitly closed meadows, it suffices to show that $h$ preserves the operations of the form $r_p$.
To this end, consider a prime $p$ and $a \in A$. We need to prove that $h(r_p^\A(a)) = r_p^{   \prod_{i \in I}\D_i}(h(a))$, that is, 
\begin{equation}\label{Eq : technical : ICM is V}
(h(r_p^\A(a)))(i) = r_p^{\D_i}((h(a))(i))\text{ for every }i \in I.
\end{equation}
In order to prove the above display, consider $j \in I$. First, recall that $\A$ satisfies the equation $\mathsf{root}_p(x,r_p(x))$ by assumption. Therefore, 
\[
(r_p^\A(a))^p =  (1-p^*p)a.
\]
Let $\pi_j \colon \prod_{i \in I}\C_i \to \C_j$ be the projection onto the $j$-th coordinate. 
As $\pi_j \circ h \colon \A_\mathsf{m} \to \C_j$ is a homomorphism of meadows, from the above display it follows that the following  holds in $\C_j$:
\[
((h(r^\A_p(a)))(j))^p = (1-p^*p)(h(a)(j)).
\]
Together with the definition of $\mathsf{root}_p(x, y)$, this yields
\[
\C_j \vDash \mathsf{root}_p((h(a))(j), (h(r_p^\A(a)))(j)). 
\]
Recall that $\C_i$ is weakly rooted because it is the meadow reduct of $\D_i \in \mathsf{ICF}$. Consequently, we can apply \cref{Prop : graph} to the above display,  
obtaining that $(h(r_p^\A(a)))(i)$ is $r_p((h(a))(i))$ in $\C_i$. As $\C_i$ is the meadow reduct of $\D_i$, we have $(h(r_p^\A(a)))(i) = r_p^{\D_i}((h(a))(i))$, whence (\ref{Eq : technical : ICM is V}) holds.
\end{proof}

\subsection{Subdirect irreducibility}

Let $\K$ be a quasivariety. A congruence $\theta$ of an algebra $\A \in \K$ is said to be a $\K$-\emph{congruence} of $\A$ when $\A / \theta \in \K$. When ordered under inclusion, the set of $\K$-congruences of $\A$ forms an algebraic lattice $\Con_\K(\A)$, whose minimum is the identity relation $\mathsf{id}_A$ on $A$ (see, e.g.,  \cite[Prop.~1.4.7 \& Cor.~1.4.11]{Go98a}). We recall that an element $a$ of a bounded lattice $\A$ is  \emph{meet irreducible} when it is not the maximum of $\A$ and for all $b, c \in A$ such that $ a= b \land c$ either $a = b$ or $a = c$ holds. Then, we say that a member $\A$ of $\K$ is \emph{relatively finitely subdirectly irreducible} when  $\mathsf{id}_A$ is  meet irreducible in $\Con_\K(\A)$ (see, e.g., \cite[p.\ 659]{MR2428150}). 
The class of relatively finitely subdirectly irreducible members of $\K$ will be denoted by $\K_\textsc{rfsi}$. When $\K$ is a variety, the class $\K_\textsc{rfsi}$ will be denoted by $\K_\textsc{fsi}$. The importance of $\K_\textsc{rfsi}$ derives from Birkhoff and Maltsev's subdirect decomposition theorem, which ensures that $\K = \III\SSS\PPP(\K_\textsc{rfsi})$ for every quasivariety $\K$ (see, e.g., \cite[Thm.\ 3.1.1]{Go98a}).

\begin{Theorem}[\protect{\cite[Lem.~1.5]{CD90}}]\label{Thm : RFSI members of Q(K)}
Let $\K$ be a class of similar algebras. Then $\QQQ(\K)_\textsc{rfsi} \subseteq \III\SSS\PPU(\K)$.
\end{Theorem}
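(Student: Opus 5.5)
This is a cited result (Czelakowski–Dziobiak), and I would prove it directly from \cref{Thm : Q(K) and V(K)}. Let $\A \in \QQQ(\K)_\textsc{rfsi}$. By \cref{Thm : Q(K) and V(K)}, $\QQQ(\K) = \III\SSS\PPP\PPU(\K)$, so we may assume $\A \leq \prod_{i \in I}\B_i$ with each $\B_i \in \PPU(\K)$. If $\A$ is trivial, it embeds in a trivial ultraproduct only if $\K$ contains something appropriate; I would handle this edge case via the empty product convention. Note also that $\A$ is nontrivial, since $\mathsf{id}_A$ is meet irreducible and hence not the maximum.

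For every $J \subseteq I$, let $\theta_J$ be the kernel of the composite $\A \to \prod_{i\in J}\B_i$. Each quotient $\A/\theta_J$ embeds into a product of members of $\QQQ(\K)$, so $\theta_J \in \Con_{\QQQ(\K)}(\A)$. We have $\theta_I = \mathsf{id}_A$ and $\theta_{J\cup J'} = \theta_J \cap \theta_{J'}$. Consider
\[
F = \{ J \subseteq I : \theta_J = \mathsf{id}_A \}.
\]
This family contains $I$, is closed upward, and does not contain $\emptyset$, because $\theta_\emptyset = A^2 \neq \mathsf{id}_A$. Relative finite subdirect irreducibility gives the key property: if $J \cup J' \in F$, then $J \in F$ or $J' \in F$. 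Applying this to $J \cup (I\smallsetminus J) = I$ shows that $F$ contains exactly one of $J$ and $I \smallsetminus J$. Combined with upward closure, $F$ is closed under finite intersections: if $J, J' \in F$ but $J\cap J' \notin F$, then $I\smallsetminus(J\cap J') \in F$, and splitting it as $(I\smallsetminus J)\cup(I\smallsetminus J')$ forces one of the complements into $F$, which is a contradiction. Hence $F$ is an ultrafilter on $I$.

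Finally, the natural map $\A \to \prod_{i\in I}\B_i/F$ is injective. Indeed, if $a\neq b$ agree modulo $F$, then they agree on some $J\in F$, contradicting $\theta_J = \mathsf{id}_A$. So $\A \in \III\SSS\PPU\PPU(\K)$. It then remains to use that an ultraproduct of ultraproducts of members of $\K$ is isomorphic to an ultraproduct of members of $\K$ (see \cite{BuSa00}), which yields $\A \in \III\SSS\PPU(\K)$.

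I expect the main obstacle to be the verification that $F$ is an ultrafilter, specifically closure under intersections. This is exactly where meet irreducibility in $\Con_{\QQQ(\K)}(\A)$, rather than in the full congruence lattice, must be used, and where one must check that each $\theta_J$ is a relative congruence. The collapse $\PPU\PPU \subseteq \III\PPU$ is standard.
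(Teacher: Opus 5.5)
The paper gives no proof of this result; it is quoted from Czelakowski--Dziobiak. Your argument therefore has to stand on its own. Its architecture is the standard one, and most steps are fine: each $\theta_J$ is a relative congruence, meets in $\Con_{\QQQ(\K)}(\A)$ are intersections, $\emptyset\notin F$ because $\A$ is nontrivial, and meet irreducibility correctly gives that $J\cup J'\in F$ implies $J\in F$ or $J'\in F$.

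The gap is the next step. Applied to $J\cup(I\setminus J)=I$, this primeness property yields only that \emph{at least} one of $J$ and $I\setminus J$ lies in $F$, not exactly one. Your closure-under-intersections argument reaches its ``contradiction'' only by invoking that unproved uniqueness.

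In fact $F$ need not be a filter. Take $\K=\{\B\}$ with $\B$ a field, and let $\A=\B$, which is simple and hence in $\QQQ(\K)_{\textsc{rfsi}}$. Represent it via the diagonal embedding $\A\to\B\times\B$, with $I=\{1,2\}$. Then $\theta_{\{1\}}=\theta_{\{2\}}=\theta_I=\mathsf{id}_A$ while $\theta_\emptyset=A^2$, so $F=\{\{1\},\{2\},\{1,2\}\}$. This family contains $\{1\}$ and its complement but not their intersection. With $J=\{1\}$ and $J'=\{2\}$, your splitting $I=\{2\}\cup\{1\}$ forces a complement into $F$, and there is no contradiction. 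So the assertion that $F$ is an ultrafilter is false in general.

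What is true, and suffices, is that $F$ \emph{contains} an ultrafilter. Put $G=\{J\subseteq I : I\setminus J\notin F\}$. From $I\in F$, $\emptyset\notin F$, upward closure of $F$, and primeness, $G$ is a proper filter. Closure of $G$ under intersections is exactly your identity $I\setminus(J\cap J')=(I\setminus J)\cup(I\setminus J')$, now used in the correct direction.

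Extend $G$ to an ultrafilter $U$. If $J\in U$ but $J\notin F$, then $I\setminus J\in G\subseteq U$, which is impossible; hence $U\subseteq F$. Your injectivity argument only uses that $J\in U$ implies $\theta_J=\mathsf{id}_A$. It therefore goes through verbatim for $\A\to\prod_{i\in I}\B_i/U$.

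The final collapse $\PPU\PPU(\K)\subseteq\III\PPU(\K)$ is correct and standard. Alternatively, you can use that $\III\SSS\PPU(\K)$ is the class of models of the universal theory of $\K$.
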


Next, we recall that an algebra $\A$ is \emph{simple} when it has exactly two congruences, namely, $\mathsf{id}_A$ and $A \times A$. For instance, every field is simple and, therefore, so is every field expansion. We also recall that $\mathsf{ICM}$ is a variety by  \cref{Thm : ICM axioms}. This explains why, in the next result, we write $\mathsf{ICM}_\textsc{fsi}$ instead of $\mathsf{ICM}_\textsc{rfsi}$.

\begin{Proposition}\label{Prop : generation for RCR and ICM}
The following equalities hold:
\[
\mathsf{RCR} =  \QQQ(\mathsf{WRF}), \quad \mathsf{ICM} = \QQQ(\mathsf{ICF}), \quad \mathsf{RCR}_\textsc{rfsi} = \mathsf{ID} = \SSS(\mathsf{F}), \quad \mathsf{ICM}_\textsc{fsi} = \mathsf{ICF}.
\]
\end{Proposition}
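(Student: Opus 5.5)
We prove the four equalities in the order listed, using \cref{Thm : Q(K) and V(K)}, \cref{Thm : RFSI members of Q(K)} and \cref{Prop : ICF closed under S and Pu}.

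\textbf{First equality.} For $\mathsf{RCR} = \QQQ(\mathsf{WRF})$, note that $\mathsf{WRF} \subseteq \mathsf{F} \subseteq \mathsf{RCR}$. Since $\mathsf{RCR}$ is a quasivariety, this gives $\QQQ(\mathsf{WRF}) \subseteq \mathsf{RCR}$. Conversely, $\mathsf{RCR} = \III\SSS\PPP(\mathsf{F})$, and every field embeds into a weakly rooted one by \cref{Cor : every field extend to a rooted one}. Hence $\mathsf{F} \subseteq \III\SSS(\mathsf{WRF})$, and so $\mathsf{RCR} \subseteq \III\SSS\PPP(\mathsf{WRF}) \subseteq \QQQ(\mathsf{WRF})$.

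\textbf{Second equality.} For $\mathsf{ICM} = \QQQ(\mathsf{ICF})$, \cref{Thm : Q(K) and V(K)} gives $\QQQ(\mathsf{ICF}) = \III\SSS\PPP\PPU(\mathsf{ICF})$. By \cref{Prop : ICF closed under S and Pu}, $\PPU(\mathsf{ICF}) \subseteq \mathsf{ICF}$. Therefore $\QQQ(\mathsf{ICF}) = \III\SSS\PPP(\mathsf{ICF}) = \mathsf{ICM}$.

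\textbf{Fourth equality.} For $\mathsf{ICM}_\textsc{fsi} = \mathsf{ICF}$, apply \cref{Thm : RFSI members of Q(K)} together with the second equality and \cref{Prop : ICF closed under S and Pu}:
\[
\mathsf{ICM}_\textsc{fsi} = \QQQ(\mathsf{ICF})_\textsc{rfsi} \subseteq \III\SSS\PPU(\mathsf{ICF}) = \mathsf{ICF}.
\]
Conversely, every member of $\mathsf{ICF}$ is a field expansion and hence simple. Since $\mathsf{ICM}$ is a variety, its relative congruences are all its congruences. So $\mathsf{id}_A$ is meet irreducible in the two-element lattice $\{\mathsf{id}_A, A\times A\}$, and $\mathsf{ICF} \subseteq \mathsf{ICM}_\textsc{fsi}$.

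\textbf{Third equality.} The equality $\mathsf{ID} = \SSS(\mathsf{F})$ holds by definition, so we must show $\mathsf{RCR}_\textsc{rfsi} = \mathsf{ID}$.

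For the inclusion $\subseteq$, apply \cref{Thm : RFSI members of Q(K)} with the first equality to get $\mathsf{RCR}_\textsc{rfsi} \subseteq \III\SSS\PPU(\mathsf{WRF})$. Since $\PPU(\mathsf{WRF}) = \mathsf{WRF}$, this is contained in $\III\SSS(\mathsf{F}) = \mathsf{ID}$.

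For $\supseteq$, which I expect to be the only step needing an actual argument, take an integral domain $\A$. The $\mathsf{RCR}$-congruences of $\A$ correspond to the ideals $I$ with $\A/I$ reduced, that is, the radical ideals. Intersection of congruences corresponds to intersection of ideals. Suppose $I \cap J = \{0\}$ with $I, J$ radical and both nonzero. Pick $0 \ne a \in I$ and $0 \ne b \in J$. Then $ab \in I \cap J = \{0\}$, which is impossible in an integral domain. Hence $\{0\}$, i.e.\ $\mathsf{id}_A$, is meet irreducible in $\Con_\mathsf{RCR}(\A)$. It is not the maximum because $\A$ is nontrivial and $\A/A$ is the trivial ring, which is reduced, so $A \times A$ is an $\mathsf{RCR}$-congruence distinct from $\mathsf{id}_A$. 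Therefore $\A \in \mathsf{RCR}_\textsc{rfsi}$.
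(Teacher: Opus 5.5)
Your proof is correct. The second and fourth equalities are handled exactly as in the paper.

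For the first equality you take a slightly different, equally short route. You get $\QQQ(\mathsf{WRF}) \subseteq \mathsf{RCR}$ directly from the fact that $\mathsf{RCR}$ is already a quasivariety containing $\mathsf{WRF}$. The paper instead first shows $\mathsf{RCR} = \III\SSS\PPP(\mathsf{WRF})$, then uses $\PPU(\mathsf{WRF}) = \mathsf{WRF}$ to rewrite this as $\III\SSS\PPP\PPU(\mathsf{WRF}) = \QQQ(\mathsf{WRF})$.

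The genuine difference is the equality $\mathsf{RCR}_{\textsc{rfsi}} = \mathsf{ID}$, which the paper does not prove but cites from the literature. You derive the inclusion $\subseteq$ by the same \cref{Thm : RFSI members of Q(K)} argument the paper uses for $\mathsf{ICM}_{\textsc{fsi}} \subseteq \mathsf{ICF}$, now fed by the first equality and $\PPU(\mathsf{WRF}) = \mathsf{WRF}$. You derive $\supseteq$ from the elementary observation that two nonzero ideals of an integral domain cannot intersect in $\{0\}$. This makes the proposition self-contained at the cost of a few lines.

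Two facts you use tacitly are worth stating. First, an integral domain is reduced, so it lies in $\mathsf{RCR}$ to begin with. Second, meets in $\Con_{\mathsf{RCR}}(\A)$ are intersections, since radical ideals are closed under intersection, so your ideal computation really computes the lattice meet. Your argument never actually uses radicality, so it shows that $\mathsf{id}_A$ is meet irreducible even in the full congruence lattice of $\A$.
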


\begin{proof}
We begin by proving $\mathsf{RCR} = \QQQ(\mathsf{WRF})$. Observe that
\[
\mathsf{RCR} = \III\SSS\PPP(\mathsf{F}) \subseteq \III\SSS\PPP\SSS(\mathsf{WRF}) = \III\SSS\PPP(\mathsf{WRF}) \subseteq \III\SSS\PPP(\mathsf{F}) = \mathsf{RCR},
\]
where the first and the last steps hold by definition,
the second by Corollary \ref{Cor : every field extend to a rooted one}, the third because $\III\SSS\PPP(\K)$ is closed under $\SSS$ for every class of algebras $\K$ (see, e.g., \cite[Thm.\ V.2.20]{BuSa00}), and the fourth because $\mathsf{WRF}\subseteq \mathsf{F}$. Therefore, $\mathsf{RCR} = \III\SSS\PPP(\mathsf{WRF})$. Since $\mathsf{WRF}$ is closed under $\PPU$ by \cref{Prop : ICF closed under S and Pu},
this yields $\mathsf{RCR} = \III\SSS\PPP\PPU(\mathsf{WRF})$. By \cref{Thm : Q(K) and V(K)} we conclude that $\mathsf{RCR} = \QQQ(\mathsf{WRF})$, as desired.

Next, observe that
\[
\mathsf{ICM} = \III\SSS\PPP(\mathsf{ICF}) = \III\SSS\PPP\PPU(\mathsf{ICF}) = \QQQ(\mathsf{ICF}),
\]
where the first equality holds by definition, the second because $\mathsf{ICF}$ is closed under $\PPU$ by \cref{Prop : ICF closed under S and Pu}, and the third by \cref{Thm : Q(K) and V(K)}. 

Lastly, recall that $\mathsf{ID} = \SSS(\mathsf{F})$ by definition.
As $\mathsf{RCR}_\textsc{rfsi} = \mathsf{ID}$ (see, e.g., \cite[p.~410]{CampRaf}),
it only remains to show that $\mathsf{ICM}_\textsc{fsi} = \mathsf{ICF}$. From $\mathsf{ICM} = \QQQ(\mathsf{ICF})$ and \cref{Thm : RFSI members of Q(K)} it follows that $\mathsf{ICM}_\textsc{fsi} \subseteq \III\SSS\PPU(\mathsf{ICF}) = \mathsf{ICF}$, where the last equality holds by \cref{Prop : ICF closed under S and Pu}. 
 To prove the reverse inclusion, consider $\A \in \mathsf{ICF} \subseteq \mathsf{ICM}$. As $\A$ has a field reduct, it is simple. Consequently, $\mathsf{id}_A$ is meet irreducible in $\Con_{\mathsf{ICM}}(\A)$ and $\A \in \mathsf{ICM}_\textsc{fsi}$.
\end{proof}

\section{Primitive positive expansions}\label{Sec : Beth companions}

As we mentioned, in order to complete a quasivariety so that its monomorphisms become regular, it is sensible to expand it with enough implicit operations. In the context of quasivarieties, implicit operations admit a  description in terms of \emph{primitive positive formulas} (for short, \emph{pp formulas}), that is, formulas of the form $\exists x_1, \dots, x_n \varphi$, where $\varphi$ is a conjunction of equations. More precisely,  if $f$ is an implicit operation of a quasivariety $\K$, there exist implicit operations $f_1, \dots, f_n$ of $\K$ definable by pp formulas such that $f^\A = f_1^\A \cup \dots \cup f_n^\A$ for every $\A \in \K$ (see \cite[Cor.\ 3.10]{CKMIMPv2}).
Consequently, the pp definable implicit operations of $\K$ form the building blocks of all implicit operations of $\K$ and, therefore, we restrict our attention to them. 

In general, the implicit operations of a quasivariety  need not be total. This motivates  the following concept (see \cite[Sec.\ 8]{CKMIMPv2}).

\begin{Definition}
Let $\K$ be a quasivariety. An implicit operation $f$ of $\K$ is \emph{extendable} when for all $\A \in \K$ and  $\langle a_1, \dots, a_n \rangle \in \mathsf{dom}(f^\A)$ there exists an algebra $\B \in \K$ with $\A \leq \B$ such that $\langle a_1, \dots, a_n \rangle \in \mathsf{dom}(f^\B)$. The class of extendable implicit operations of $\K$ will be denoted by $\ext(\K)$, and that of pp definable extendable implicit operations of $\K$ by $\extpp(\K)$. 
\end{Definition}
The term ``extendable'' in the above definition derives from the fact that every member of a quasivariety $\K$ can be 
upgraded to one in which all the extendable implicit operations are total in the sense that for every $\A \in \K$ there exists $\B \in \K$ with $\A \leq \B$ such that $f^\B$ is total and extends $f^\A$ for each $f \in \ext(\K)$ (see   \cite[Prop.\ 8.1 \& Thm.\ 8.4]{CKMIMPv2}).
This property of an implicit operation becomes crucial if we want to add it to $\K$.

We will show that “taking weak inverses” and “taking weak prime roots” can be viewed as extendable implicit operations of $\mathsf{RCR}$. To this end, we consider the following pp formulas in the language of $\mathsf{RCR}$ for every prime $p$:
\[
\mathsf{inv}(x, y) = (x^2y \thickapprox x) \sqcap (x y^2 \thickapprox y) \, \, \text{ and } \, \, \exists\mathsf{root}_p(x, y) = \exists z ( \mathsf{inv}(p, z) \sqcap (y^p \thickapprox (1-zp)x)).
\]

\begin{Theorem}\label{Thm : root and inv are extendable}
The following conditions hold for every prime $p$ and $\A \in \mathsf{WRF}$:
\benroman
\item the formula $\mathsf{inv}(x, y)$ defines a unary $g \in \extpp(\mathsf{RCR})$ such that $g^\A$ is total and 
\[
g^\A(a) = a^*\text{ for every }a \in A;
\]
\item the formula $\exists \mathsf{root}_p(x, y)$ defines a unary $f_p \in \extpp(\mathsf{RCR})$ such that $f_p^\A$ is total and 
\[
f_p^\A(a) = r_p(a) \text{ for every }a \in A.
\]
\eroman 
\end{Theorem}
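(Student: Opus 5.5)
To prove the theorem, I will check four things for each formula: functionality (at most one $y$ in every member of $\mathsf{RCR}$), preservation under homomorphisms, totality together with the explicit value on $\mathsf{WRF}$, and extendability.

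\textbf{Functionality.} For $\mathsf{inv}$, I will use reducedness. Suppose $b$ and $c$ both satisfy $a^2y = a$ and $ay^2 = y$. First, $ab = a^2b^2$, so $ab$ and $ac$ are idempotents. Next, $ab \cdot ac = a^2bc$, and from $a = a^2c$ we get $ab = a^2bc$. Symmetrically $ac = a^2bc$, so $ab = ac$. Then
\[
b = ab^2 = (ab)b = (ac)b = (ab)c = (ac)c = ac^2 = c.
\]
Alternatively, I could embed into a product of fields and argue coordinatewise: in a field, $\mathsf{inv}(a,y)$ forces $y = a^{-1}$ if $a \neq 0$ and $y = 0$ if $a = 0$. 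The coordinatewise argument is cleaner, and I will use it throughout. Since $\mathsf{RCR} = \III\SSS\PPP(\mathsf{F})$, any pp-definable relation that is functional in every field is functional in every member of $\mathsf{RCR}$. The reason is that the satisfying $y$ is determined in each coordinate of the product, hence determined in the product, and the pp formula restricts to subalgebras in the right direction for uniqueness.

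For $\exists\mathsf{root}_p$, I will again work in a field $\F$. The witness $z$ with $\mathsf{inv}(p,z)$ is unique, namely $z = p^*$. So $y^p = (1-p^*p)x$. If $\F$ has characteristic $p$, this reads $y^p = x$, which has at most one solution. Otherwise it reads $y^p = 0$, forcing $y = 0$. Functionality in $\mathsf{RCR}$ then follows as above.

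\textbf{Preservation and totality.} Preservation under homomorphisms is automatic, because pp formulas are preserved by homomorphisms. The domain is the set of $a$ for which a witness exists, so each formula defines a partial operation of $\mathsf{RCR}$, i.e.\ an implicit operation. For totality on $\A \in \mathsf{WRF}$: $\mathsf{inv}(a,a^*)$ holds in any field. For the roots, $\exists\mathsf{root}_p(a, r_p(a))$ holds with $z = p^*$ by \cref{Prop : graph}, using that $\A$ is weakly rooted. This gives the stated values $g^\A(a) = a^*$ and $f_p^\A(a) = r_p(a)$.

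\textbf{Extendability.} By \cref{Prop : generation for RCR and ICM} and its proof, every $\A \in \mathsf{RCR}$ embeds into a product $\prod_i \C_i$ of weakly rooted fields. A product of members of $\mathsf{WRF}$ lies in $\mathsf{RCR}$. Moreover, since pp formulas with coordinatewise witnesses hold in products, $g$ and $f_p$ are total on $\prod_i \C_i$. So every element of $\A$ lies in the domain after embedding, and extendability holds trivially.

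The main obstacle is the functionality step in arbitrary reduced rings. It is handled by the observation that uniqueness of a solution to a pp formula can be checked coordinatewise in a product of fields, and is inherited by subalgebras.
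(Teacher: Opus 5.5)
Your proof is correct, but it takes a more hands-on route than the paper.

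The paper proves only a field-level Claim. In every $\A \in \mathsf{WRF}$, $\A \vDash \mathsf{inv}(a,b)$ iff $b = a^*$, and $\A \vDash \exists\mathsf{root}_p(a,b)$ iff $b = r_p(a)$. It then feeds this into the imported criterion of Proposition~\ref{Prop : extendable trick}, applied with $\K = \mathsf{WRF}$. That criterion says that a pp formula which is functional in an elementary class $\K$ and realizable in extensions inside $\QQQ(\K)$ defines a member of $\extpp(\QQQ(\K))$. The identity $\mathsf{RCR} = \QQQ(\mathsf{WRF})$ then finishes the proof.

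You carry out the transfer from fields to $\mathsf{RCR}$ yourself:
\begin{itemize}
\item Functionality passes from fields to $\III\SSS\PPP(\mathsf{F}) = \mathsf{RCR}$ because it is checked coordinatewise and inherited by subalgebras. This works because pp formulas are preserved upward along embeddings; equivalently, the functionality condition is a quasiequation.
\item Preservation by homomorphisms follows from functionality together with preservation of pp formulas.
\item Extendability comes from $\mathsf{RCR} = \III\SSS\PPP(\mathsf{WRF})$, since witnesses can be chosen coordinatewise in a product of weakly rooted fields.
\end{itemize}
The paper's route is shorter and reuses a general criterion, so it only has to check functionality and realizability inside $\mathsf{WRF}$. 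Yours is self-contained and makes explicit which closure properties do the work: quasiequations under $\III\SSS\PPP$, and pp formulas under homomorphisms and products.

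One small remark: your direct computation for $\mathsf{inv}$ never uses reducedness. It shows that $\mathsf{inv}(a,y)$ has at most one solution in every commutative ring. Reducedness is genuinely needed only for $\exists\mathsf{root}_p$. For instance, in $\mathbb{Q}[t]/(t^2)$ with $p = 2$, the element $2$ is invertible, so the formula reduces to $y^2 = 0$, which is satisfied by both $0$ and $t$.
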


The proof of \cref{Thm : root and inv are extendable} is based on the next observation (see 
\cite[Cor.\ 3.11 \& Prop.\ 8.11(ii)]{CKMIMPv2}).

\begin{Proposition}\label{Prop : extendable trick} Let $\K$ be an elementary class of similar algebras and $\varphi(x_1, \dots, x_n, y)$ a pp formula satisfying the following conditions:
\benroman
\item\label{item : extendable trick : 1} $\K \vDash (\varphi(x_1, \dots, x_n, y) \sqcap \varphi(x_1, \dots, x_n, z)) \to y \thickapprox z$;
\item\label{item : extendable trick : 2} for all $\A \in \K$ and $a_1, \dots, a_n \in A$ there exist $\B \in \QQQ(\K)$ and $b \in B$ such that $\A \leq \B$ and $\B \vDash \varphi(a_1, \dots, a_n, b)$. 
\eroman
Then $\varphi$ defines an $n$-ary member of $\extpp(\QQQ(\K))$.
\end{Proposition}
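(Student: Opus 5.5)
The plan is to let $\varphi$ define, on each $\A \in \QQQ(\K)$, the partial function
\[
f^\A = \{ \langle a_1, \dots, a_n, b \rangle : \A \vDash \varphi(a_1, \dots, a_n, b) \}.
\]
We then verify three things: $f^\A$ is a function, the family $f$ is preserved by homomorphisms (so it is an implicit operation defined by $\varphi$), and $f$ is extendable. The only ingredients are the behaviour of pp formulas under homomorphisms and products, and the description $\QQQ(\K) = \III\SSS\PPP\PPU(\K)$ from \cref{Thm : Q(K) and V(K)}.

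\emph{Step 1: functionality.} Write $\varphi(\vec x, y) = \exists \vec u\, \psi(\vec x, \vec u, y)$ with $\psi$ a conjunction of equations. After renaming bound variables and moving the quantifiers to the front, the sentence in condition \eqref{item : extendable trick : 1} is logically equivalent to the universal closure of the quasiequation
\[
\psi(\vec x, \vec u, y) \sqcap \psi(\vec x, \vec v, z) \to y \thickapprox z .
\]
This quasiequation holds in $\K$ by \eqref{item : extendable trick : 1}. Validity of quasiequations is preserved by $\III, \SSS, \PPP, \PPU$, so it holds in all of $\QQQ(\K)$. Hence each $f^\A$ with $\A \in \QQQ(\K)$ is a partial function.

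\emph{Step 2: implicit operation.} Let $h \colon \A \to \B$ be a homomorphism in $\QQQ(\K)$ and suppose $\A \vDash \varphi(\vec a, b)$. Pp formulas are preserved by homomorphisms: apply $h$ to the witnesses $\vec u$. Thus $\B \vDash \varphi(h(\vec a), h(b))$, so $h(\vec a) \in \mathsf{dom}(f^\B)$ and, by Step 1, $f^\B(h(\vec a)) = h(f^\A(\vec a))$. So $f$ is an operation of $\QQQ(\K)$, and it is implicit because $\varphi$ defines it by construction.

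\emph{Step 3: extendability.} As literally stated, the definition is met by taking $\B = \A$. I will nevertheless prove the intended stronger property: every tuple $\vec a \in A^n$ of every $\A \in \QQQ(\K)$ enters the domain of $f$ in some extension. By \cref{Thm : Q(K) and V(K)}, up to isomorphism $\A \leq \prod_{i \in I} \C_i$ with each $\C_i \in \PPU(\K)$. Since $\K$ is elementary, $\PPU(\K) = \K$, so each $\C_i \in \K$. Apply condition \eqref{item : extendable trick : 2} to $\C_i$ and the tuple $\vec a(i)$. This yields, for each $i$, some $\B_i \in \QQQ(\K)$ with $\C_i \leq \B_i$ and an element $b_i \in B_i$ with $\B_i \vDash \varphi(\vec a(i), b_i)$; here we use the axiom of choice. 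Put $\B = \prod_{i \in I} \B_i$, which lies in $\QQQ(\K)$ and satisfies $\A \leq \prod_{i \in I} \C_i \leq \B$. Let $b = \langle b_i : i \in I \rangle$. Pp formulas are preserved by direct products: choose witnesses coordinatewise and note that equations hold in a product iff they hold in every factor. Hence $\B \vDash \varphi(\vec a, b)$, so $\vec a \in \mathsf{dom}(f^\B)$.

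The main obstacle is Step 3. It is the only place where we must pass from $\K$ to $\QQQ(\K)$, and it is where the hypothesis that $\K$ is elementary is used, namely to absorb the ultraproducts in $\III\SSS\PPP\PPU(\K)$. The remaining care points are routine bookkeeping: the isomorphic copy of $\A$ inside the product, and the simultaneous choice of the witnesses $\B_i$ and $b_i$.
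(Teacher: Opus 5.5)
Your argument is correct. The paper gives no proof of its own and defers to \cite[Cor.~3.11 \& Prop.~8.11(ii)]{CKMIMPv2}. Your three steps are exactly the standard route behind those results: functionality transfers to $\QQQ(\K)$ as a quasiequation, pp formulas are preserved by homomorphisms, and extendability follows from $\QQQ(\K) = \III\SSS\PPP\PPU(\K)$, $\PPU(\K) \subseteq \K$, and preservation of pp formulas by direct products. You are also right that the printed definition of extendability should quantify over all tuples in $A^n$ rather than over $\mathsf{dom}(f^\A)$, and the property you prove is the intended one.
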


We are now ready to prove \cref{Thm : root and inv are extendable}.

\begin{proof}
We begin with the following observation.

\begin{Claim}\label{Claim : extendability}
For all $\A \in \mathsf{WRF}$ and $a, b \in A$,
\[
(\A \vDash \mathsf{inv}(a, b) \iff b = a^*) \, \,  \text{ and } \, \,   (\A \vDash \exists \mathsf{root}_p(a, b) \iff b = r_p(a)) .
\]
\end{Claim}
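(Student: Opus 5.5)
The claim has two independent parts, and for both I will split on whether the relevant element or characteristic is zero, since $\A$ is a weakly rooted field.

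\textbf{The weak inverse.} Fix $a, b \in A$.

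First suppose $a \ne 0$. Then $x^2y \thickapprox x$ gives $a^2 b = a$, and cancelling $a$ in the field yields $ab = 1$, so $b = a^{-1} = a^*$. Conversely, $b = a^{-1}$ satisfies both $a^2 a^{-1} = a$ and $a a^{-2} = a^{-1}$.

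Now suppose $a = 0$. The equation $x^2 y \thickapprox x$ holds trivially, while $xy^2 \thickapprox y$ becomes $0 = b$. Hence $b = 0 = a^*$, and conversely $b = 0$ satisfies $\mathsf{inv}(0,0)$.

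\textbf{The weak $p$-root.} We first apply the inverse part to the element $p = p1 \in A$. The witness $z$ in $\exists\mathsf{root}_p(a,b)$ satisfies $\mathsf{inv}(p,z)$, and by the previous step this is equivalent to $z = p^*$. So $z$ exists and is unique, namely $p^*$.

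It follows that
\[
\A \vDash \exists\mathsf{root}_p(a,b) \iff b^p = (1-p^*p)a,
\]
where $p^*$ is computed in the zero-totalized field expansion of $\A$. The right-hand side is exactly $\mathsf{root}_p(a,b)$ evaluated in that expansion.

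That expansion belongs to $\mathsf{ZTF}$ and is weakly rooted, because $\A \in \mathsf{WRF}$. Therefore \cref{Prop : graph} applies and gives $b^p = (1-p^*p)a \iff b = r_p(a)$, which completes the second equivalence.

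\textbf{Main obstacle.} The only point needing care is the transition from a pp formula in the pure ring language to the equation $\mathsf{root}_p$, which is stated in the language of $\mathsf{ZTF}$. This is handled by identifying the existential witness $z$ with $p^*$ via the first part of the claim. After that, the second equivalence is routine.
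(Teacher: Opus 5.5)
Your proof is correct and follows essentially the same route as the paper. Both arguments split on $a = 0$ for the weak inverse, then use that equivalence to identify the existential witness $z$ with $p^*$, which reduces $\exists\mathsf{root}_p(a,b)$ to $\mathsf{root}_p(a,b)$ in the zero-totalized expansion, where \cref{Prop : graph} concludes.
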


\begin{proof}[Proof of the Claim]
To prove the left hand side of the above display, observe that $b = a^*$ immediately implies $\A \vDash \mathsf{inv}(a, b)$. Conversely, suppose that $\A \vDash \mathsf{inv}(a, b)$, that is, $a^2 b = a$ and $ab^2 = b$. First, suppose that $a = 0$. Then $a = a^* = 0$. 
Consequently, from $ab^2 = b$ it follows that $a^* = 0 = a b^2 = b$. Next, we consider the case where $a \ne 0$ and, therefore, $a^* = a^{-1}$. Thus, from $a^2 b = a$ it follows that $b = a^{-2}a^2 b = a^{-2}a = a^{-1} = a^*$,
establishing the left hand side of the above display.

Next we prove the right hand side. Since $\A$ is a field, we can expand it to a zero totalized field $\A^+$.
We will prove that 
\begin{align*}
    \A \vDash \exists \mathsf{root}_p(a, b) &\iff \A \vDash \mathsf{inv}(p1, c) \sqcap (b^p \thickapprox (1-cp)a) \text{ for some }c \in A\\
  & \iff  b^p = (1-p^*p)a\\
  & \iff \A^+ \vDash \mathsf{root}_p(a, b)\\
  &\iff b = r_p(a).
\end{align*}
The above equivalences are justified as follows: the first holds by the definition of $\exists \mathsf{root}_p$, the second by the left hand side of the display in the statement of the claim, the third by the definition of $\mathsf{root}_p(x, y)$, and the fourth by \cref{Prop : graph} and the assumption that $\A$ is weakly rooted.
\end{proof}

Recall from \cref{Prop : ICF closed under S and Pu} that $\mathsf{WRF}$ is an elementary class. Therefore, we can apply \cref{Claim : extendability}, obtaining that condition (\ref{item : extendable trick : 1}) of \cref{Prop : extendable trick} holds in $\mathsf{WRF}$
both for $\mathsf{inv}(x, y)$ and $\exists \mathsf{root}_p(x, y)$. 
The same is true for condition (\ref{item : extendable trick : 2}) of \cref{Prop : extendable trick} by \cref{Claim : extendability} and the fact that for every $\A \in \mathsf{WRF}$ and $a \in A$ the elements $a^*$ and $r_p(a)$ exist in $\A$. Therefore, from \cref{Prop : extendable trick} it follows that there exist unary $g,f_p \in \extpp(\QQQ(\mathsf{WRF}))$ defined by $\mathsf{inv}(x, y)$ and $\exists \mathsf{root}_p(x, y)$, respectively.
As $\mathsf{RCR} = \QQQ(\mathsf{WRF})$
by Proposition \ref{Prop : generation for RCR and ICM}, we obtain $g, f_p \in \extpp(\mathsf{RCR})$. Lastly, let $\A \in \mathsf{WRF}$ and recall that $a^*$ and $r_p(a)$ exist in $\A$ for every $a \in A$. Together with \cref{Claim : extendability}, this implies that $g^\A$ and $f_p^\A$ are total and such that $g^\A(a) = a^*$ and $f_p^\A(a) = r_p(a)$ for every $a \in A$.
\end{proof}

In order to add a family of implicit operations $\mathcal{F} \subseteq \extpp(\K)$ to a quasivariety $\K$, we proceed as follows. Let $\L$ be the language of $\K$ and $\L_\F$ the language obtained by adding to $\L$ a new $n$-ary function symbol $g_f$ for each $n$-ary $f \in \F$. Then, we expand every member $\A$ of $\K$ in which $\{ f^\A : f \in \F \}$ is a family of total functions to an algebra $\A[\L_\F]$ in the language $\L_\F$ by interpreting  $g_f$ as $f^\A$ for each $f \in \F$. In addition, for every $\mathsf{N} \subseteq \K$  let
\[
\mathsf{N}[\L_\F] = \{ \A[\L_\F] : \A \in \mathsf{N} \text{ and }\{ f^\A : f \in \F \} \text{ is a family of total functions} \}.
\]
 The next definition captures the idea of  ``adding the implicit operations in $\mathcal{F}$ to $\K$''.
 
\begin{Definition}
Let $\K$ be a quasivariety.
\benroman
\item Given $\mathcal{F} \subseteq \extpp(\K)$, the \emph{pp expansion} of $\K$ induced by $\F$ is $\SSS(\K[\L_\F])$. 
\item A pp expansion of $\K$ is said to be \emph{simple} when it is of the form $\K[\L_\F]$ for some $\F \subseteq \extpp(\K)$.
\eroman
\end{Definition}

Notably, every pp expansion of a quasivariety is also a quasivariety  (see \cite[Thm.\ 10.3(ii)]{CKMIMPv2}).  Simple pp expansions admit a transparent categorical description, as we proceed to recall. 
An isomorphism-closed
full subcategory $\mathsf{C}$ of a category $\mathsf{D}$ is said \emph{mono-reflective} when the inclusion functor $i \colon \mathsf{C} \to \mathsf{D}$ has a left adjoint and the unit of resulting adjunction is componentwise a monomorphism (see, e.g., \cite[Def.~16.1]{AHS06}). 

\begin{Theorem}[\protect{\cite[Thm.~3.1]{BethCat26}}]\label{Thm : main pp}
Let $\K$ and $\M$ be a pair of quasivarieties. Then  $\M$ is a simple pp expansion of $\K$ if and only if the forgetful functor $U \colon \M \to \K$ is well defined and induces an isomorphism from $\M$ to a mono-reflective subcategory of $\K$.
\end{Theorem}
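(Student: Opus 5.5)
The plan is to prove the two implications separately. Throughout, I use the results of \cite{CKMIMPv2} quoted above: extendable implicit operations can be made total in an extension, and pp expansions of quasivarieties are quasivarieties.

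For the forward implication, assume $\M = \K[\L_\F]$ with $\F \subseteq \extpp(\K)$.
\benroman
\item The forgetful functor $U$ is well defined because the $\L$-reduct of $\A[\L_\F]$ is $\A \in \K$.
\item $U$ is injective on objects, since the interpretation of each $g_f$ in $\A[\L_\F]$ is $f^\A$, which is determined by $\A$.
\item $U$ is full: every $\L$-homomorphism $h \colon \A \to \B$ between members of $\K$ on which the $f$'s are total also preserves each $g_f$, because implicit operations are globally preserved by homomorphisms. Faithfulness is trivial.
\item The image is isomorphism closed, because isomorphisms preserve the $f$'s and hence their totality.
\eroman
To build the reflection of $\A \in \K$, I take $R(\A)$ to be the member of the quasivariety $\M$ presented by generators $A$ and the relations of the positive atomic diagram of $\A$; this object exists in any quasivariety. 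I take the unit $\eta_\A \colon \A \to U R(\A)$ to be the canonical map. A $\K$-homomorphism $\A \to U(\mathbf{N})$ with $\mathbf{N} \in \M$ respects the diagram, so it extends uniquely to an $\M$-homomorphism $R(\A) \to \mathbf{N}$; this gives the universal property. To show $\eta_\A$ is injective, I use extendability: there is $\B \in \K$ with $\A \le \B$ in which every $f \in \F$ is total. Then $\B[\L_\F] \in \M$, and the inclusion $\A \to \B$ factors through $\eta_\A$, so $\eta_\A$ is a monomorphism.

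For the converse, assume $U$ is an isomorphism onto a mono-reflective subcategory, with units $\eta_\A$. For each basic symbol $g$ of $\M$ outside $\L$, of arity $n$, I define a candidate implicit operation $f_g$ of $\K$ by setting
\[
f_g^\A(\vec a) = b \iff \eta_\A(b) = g^{R(\A)}(\eta_\A(\vec a)).
\]
This is well defined because $\eta_\A$ is injective. It is globally preserved by homomorphisms by naturality of $\eta$, since $R(h)$ is an $\M$-homomorphism and hence preserves $g$. It is extendable because $\eta_\A$ embeds $\A$ into $U R(\A)$, where $f_g$ is total and coincides with $g$. On members of the image of $U$ the unit is an isomorphism, so there $f_g$ is the original $g$. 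Hence $\M = \K[\L_\F]$ for $\F = \{ f_g \}$, provided each $f_g$ lies in $\extpp(\K)$.

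The main obstacle is showing that each $f_g$ is definable by a pp formula. I would first try to read the formula off a presentation. Consider the free $\M$-algebra $R(\Free_\K(x_1,\dots,x_n))$, whose $U$-reduct contains $g(\vec x)$. The point is to show that $g(\vec x)$ is captured by a pp formula $\exists \vec z\, \varphi(\vec x, y, \vec z)$ obtained from finitely many relations, using compactness together with closure of $\M$ and $\K$ under ultraproducts. The pp formula should assert that $y$ lies in the image of $g$ inside some extension carrying the needed witnesses. Functionality would then come from injectivity of the units and fullness of $U$. If this direct approach stalls, I would fall back on the Beth-style characterization in \cite[Cor.\ 3.10]{CKMIMPv2}, which writes implicit operations as finite unions of pp definable ones, and argue that a single disjunct suffices by a directedness argument on the free algebra.
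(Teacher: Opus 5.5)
The paper does not prove this theorem; it imports it from \cite{BethCat26}, so I assess your argument on its own terms. Your forward direction is fine. The converse has a genuine gap. You commit to the operation $f_g$ read off the unit, and the step you flag as the main obstacle, that $f_g$ is pp definable, is false in general. In fact $f_g$ need not even be first-order definable, so it need not be an implicit operation, and the fallback through \cite[Cor.\ 3.10]{CKMIMPv2} is unavailable too.

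The paper's own example shows this. Let $\K = \mathsf{RCR}$, $\M = \mathsf{ICM}$ and $g = r_2$. The equality $\eta_\A(b) = g^{R(\A)}(\eta_\A(\vec a))$ holds iff $h(b) = g(h(\vec a))$ for every homomorphism $h$ from $\A$ into a member of $U[\M]$, and it suffices to test $h$ with values in implicitly closed fields. One finds that $f_{r_2}^\A(a) = b$ iff $2b = 0$ and $b^2 - a$ lies in every prime ideal containing $2$, that is, $b^2 - a \in \sqrt{2A}$.
\begin{itemize}
\item For $\A_k = \mathbb{Z}[\sqrt[k]{2}]$ and $a_k = \sqrt[k]{2}$ this gives $f_{r_2}^{\A_k}(a_k) = 0$.
\item In $\prod_k \A_k$, and in every nonprincipal ultraproduct, no power of $(a_k)_k$ is divisible by $2$, so $(a_k)_k$ lies outside the domain.
\item Pp formulas are preserved by products and all first-order formulas by ultraproducts, so $f_{r_2}$ is neither pp definable nor implicit.
\end{itemize}
Already in $\mathbb{Z}$ your $f_{r_2}$ has domain $2\mathbb{Z}$, whereas the operation defined by $\exists\mathsf{root}_2$ has empty domain there.

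The missing idea is to replace $f_g$ by a possibly smaller operation, defined by a functional pp formula that agrees with $g$ on $U[\M]$.
\begin{itemize}
\item First, the unit $\eta\colon \Free_\K(\vec x) \to U\Free_\M(\vec x)$ is an epimorphism of $\K$. If $k_1, k_2 \colon U\Free_\M(\vec x) \to \D \in \K$ agree on $\vec x$, then $\eta_\D \circ k_1$ and $\eta_\D \circ k_2$ are, by fullness, $\M$-homomorphisms agreeing on the generators, hence equal; since $\eta_\D$ is injective, $k_1 = k_2$.
\item Next, apply compactness to two copies of the positive diagram of $U\Free_\M(\vec x)$ that share only $\vec x$ (equivalently, use the pp description of dominions in \cite{Camper18jsl}). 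This yields a pp formula $\varphi_g(\vec x, y)$ with $\K \vDash \varphi_g(\vec x, y) \sqcap \varphi_g(\vec x, y') \to y \thickapprox y'$ and $U\Free_\M(\vec x) \vDash \varphi_g(\vec x, g(\vec x))$.
\item Transporting the witnesses along the $\M$-homomorphisms out of $\Free_\M(\vec x)$ shows that $\varphi_g$ defines $g^{\mathbf N}$ on $U\mathbf N$ for every $\mathbf N \in \M$. The same transport into $R(\A)$, composed with $\eta_\A$, shows that the operation is extendable.
\item Finally, suppose every $\varphi_g$ defines a total function on some $\A \in \K$. Functionality then forces $\eta_\A$ to preserve each $g$, so $\A[\L_\F]$ embeds into $R(\A)$ and belongs to $\M$ by closure under $\SSS$.
\end{itemize}
This last step is also what is missing behind your ``Hence $\M = \K[\L_\F]$'', which as written only yields $\M \subseteq \K[\L_\F]$.
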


For the present purpose, the pp expansions of interest are those obtained by adding enough implicit operations so that every monomorphism becomes regular. 
\begin{Definition}
Let $\K$ be a quasivariety.
\benroman
\item A pp expansion of  $\K$ is said to be a \emph{Beth companion} of $\K$ when its monomorphisms are regular.
\item A Beth companion of $\K$ is said to be \emph{simple} when it is simple as a pp expansion of $\K$.
\eroman
\end{Definition}

While a quasivariety need not have a Beth companions, up to term equivalence it may possess only one  (see \cite[Thm.\ 11.7]{CKMIMPv2}).
For this reason, we talk about \emph{the} Beth companion of $\K$ (when it exists).
The aim of this paper is to establish that $\mathsf{ICM}$ is the Beth companion of $\mathsf{RCR}$ and that, moreover, this Beth companion is simple (see \cref{Thm : main}). As a first step, in this section we will prove  the following result. Let
\[
\F = \{ g \} \cup \{ f_p : \text{$p$ is prime} \},
\]
where $g$ and $f_p$ are the members of $\extpp(\mathsf{RCR})$ given by \cref{Thm : root and inv are extendable}.

\begin{Theorem}\label{Thm : ICM is pp expansion of RCR}
The pp expansion of $\mathsf{RCR}$ induced by $\mathcal{F}$ is simple and coincides with $\mathsf{ICM} = \mathsf{RCR}[\mathscr{L}_{\mathcal{F}}]$.
\end{Theorem}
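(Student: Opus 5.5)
We must show $\SSS(\mathsf{RCR}[\L_\F]) = \mathsf{RCR}[\L_\F] = \mathsf{ICM}$, identifying the symbols $g_g$ and $g_{f_p}$ with $(\,)^*$ and $r_p$. The plan is to prove the chain
\[
\mathsf{ICM} \subseteq \mathsf{RCR}[\L_\F] \subseteq \SSS(\mathsf{RCR}[\L_\F]) \subseteq \mathsf{ICM}.
\]
The middle inclusion is trivial. For the last inclusion, it suffices to prove $\mathsf{RCR}[\L_\F] \subseteq \mathsf{ICM}$, because $\mathsf{ICM}$ is closed under $\SSS$.

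\textbf{Inclusion $\mathsf{RCR}[\L_\F] \subseteq \mathsf{ICM}$.} Take $\A \in \mathsf{RCR}$ in which $g^\A$ and every $f_p^\A$ are total, and use \cref{Thm : ICM axioms}.
\begin{itemize}
\item The ring axioms hold because $\A$ is a commutative ring.
\item Since $\A \vDash \mathsf{inv}(a, g^\A(a))$, we get $a^2 a^* = a$ and $a (a^*)^2 = a^*$.
\item For $x \thickapprox x^{**}$, note that $\mathsf{inv}(x,y)$ is symmetric in $x$ and $y$. Hence $\A \vDash \mathsf{inv}(a^*, a)$, and uniqueness of the value of the implicit operation $g$ (it is a function on its domain) gives $a^{**} = a$.
\item For $\mathsf{root}_p$: $\A \vDash \exists\mathsf{root}_p(a, f_p^\A(a))$ yields a witness $c$ with $\mathsf{inv}(p,c)$ and $f_p^\A(a)^p = (1-cp)a$. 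Uniqueness of $g$ again forces $c = g^\A(p) = p^*$, so the equation $\mathsf{root}_p(x, r_p(x))$ holds.
\end{itemize}

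\textbf{Inclusion $\mathsf{ICM} \subseteq \mathsf{RCR}[\L_\F]$.} Take $\A \in \mathsf{ICM}$ with an embedding $h \colon \A \to \prod_{i} \D_i$, where $\D_i \in \mathsf{ICF}$.
\begin{itemize}
\item The ring reduct $\A^-$ of $\A$ lies in $\III\SSS\PPP(\mathsf{F}) = \mathsf{RCR}$.
\item Since $\A$ satisfies $x \thickapprox x^2x^*$ and $x^* \thickapprox x^{*2}x^{**} = (x^*)^2 x$, we have $\A^- \vDash \mathsf{inv}(a, a^{*\A})$. So $a \in \mathsf{dom}(g^{\A^-})$ and $g^{\A^-}(a) = a^{*\A}$, making $g^{\A^-}$ total. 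No extra work is needed here, because $\mathsf{inv}$ is quantifier-free and $g$ is defined by it.
\item Similarly, $\A^- \vDash \exists\mathsf{root}_p(a, r_p^\A(a))$ with witness $p^{*\A}$, using the equations of \cref{Thm : ICM axioms}. Hence $f_p^{\A^-}$ is total and agrees with $r_p^\A$.
\end{itemize}
Therefore $\A = \A^-[\L_\F]$.

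I expect the main obstacle to be justifying that a pp formula $\varphi$ defining an implicit operation $f$ of $\mathsf{RCR}$ satisfies $\B \vDash \varphi(a,b) \iff b = f^\B(a)$ in every $\B \in \mathsf{RCR}$, not merely in the members of $\mathsf{WRF}$ where \cref{Thm : root and inv are extendable} computed it. This equivalence is exactly the meaning of "defines" in the paper's definition of implicit operation, so it holds by definition and supports both the totality and uniqueness arguments above. If uniqueness of the witness $c$ in $\mathsf{RCR}$ needed a direct check, I would verify it by hand: if $x^2y = x$, $xy^2 = y$, $x^2z = x$ and $xz^2 = z$ in a reduced ring, embed into a product of fields and compare coordinatewise, as in \cref{Claim : extendability}.
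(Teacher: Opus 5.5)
Your proof is correct, but it takes a different route from the paper's. The paper first identifies the pp expansion with $\mathsf{ICM}$ by applying \cref{Prop : trick pp expansions} to the generating class $\mathsf{WRF}$. Since $g$ and the $f_p$ are total there and $\mathsf{RCR} = \QQQ(\mathsf{WRF})$, the pp expansion induced by $\F$ is $\QQQ(\mathsf{WRF}[\L_\F]) = \QQQ(\mathsf{ICF}) = \mathsf{ICM}$. It then proves simplicity separately, by checking that $\mathsf{RCR}[\L_\F]$ is closed under subalgebras: a subalgebra contains $a^{*\B}$, $r_p^{\B}(a)$ and the witness $p^{*\B}$, and the equations in the defining formulas persist downward.

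You instead sandwich $\SSS(\mathsf{RCR}[\L_\F])$ between $\mathsf{ICM}$ and itself. You combine the equational axiomatization of \cref{Thm : ICM axioms} with the fact that $\mathsf{inv}$ and $\exists\mathsf{root}_p$ define $g$ and $f_p$ on all of $\mathsf{RCR}$, not only on $\mathsf{WRF}$; as you note, this is just the meaning of ``defines'' in \cref{Thm : root and inv are extendable}. The symmetry of $\mathsf{inv}$ then gives $x \thickapprox x^{**}$, and functionality of $g$ pins the witness in $\exists\mathsf{root}_p$ to $p^*$. Conversely, the $\mathsf{ICM}$ equations make $\mathsf{inv}(a,a^*)$ and $\exists\mathsf{root}_p(a,r_p(a))$ true in $\A^-$, so $\A = \A^-[\L_\F]$. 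Your coordinatewise fallback check is therefore unnecessary.

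What your route buys is economy. It avoids the external theorem behind \cref{Prop : trick pp expansions}, and simplicity comes for free because $\mathsf{ICM}$ is closed under $\SSS$. The price is that it depends on the axiomatization in \cref{Thm : ICM axioms}. The paper's route needs no axiomatization of the expanded class, only knowledge of the operations on a generating class. That is what makes it applicable to pp expansions whose equational theory is not known in advance.
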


 To this end, we will employ the next observation  (see \cite[Thm.\ 10.5]{CKMIMPv2}).

\begin{Proposition}\label{Prop : trick pp expansions}
Let $\K$ be a class of similar algebras and $\F \subseteq \extpp(\QQQ(\K))$. Assume that $f^\A$ is total for every $f \in \F$ and $\A \in \K$. Then $\QQQ(\K[\L_\F])$ is the pp expansion of $\QQQ(\K)$ induced by $\mathcal{F}$.
\end{Proposition}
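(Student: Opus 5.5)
The plan is to prove the two inclusions between $\QQQ(\K[\L_\F])$ and the pp expansion $\SSS(\QQQ(\K)[\L_\F])$ of $\QQQ(\K)$ induced by $\F$. Throughout, let $\mathsf{M} = \SSS(\QQQ(\K)[\L_\F])$, and for each $f \in \F$ fix a pp formula $\varphi_f(\vec{x}, y)$ defining $f$ in $\QQQ(\K)$.

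\emph{First inclusion, $\QQQ(\K[\L_\F]) \subseteq \mathsf{M}$.} Every pp expansion of a quasivariety is a quasivariety by \cite[Thm.\ 10.3(ii)]{CKMIMPv2}, so $\mathsf{M}$ is a quasivariety. It therefore suffices to show $\K[\L_\F] \subseteq \mathsf{M}$. This is immediate: every $\A \in \K$ lies in $\QQQ(\K)$ and has all $f^\A$ total by hypothesis. Hence $\A[\L_\F] \in \QQQ(\K)[\L_\F] \subseteq \mathsf{M}$.

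\emph{Second inclusion, $\mathsf{M} \subseteq \QQQ(\K[\L_\F])$.} Since $\QQQ(\K[\L_\F])$ is closed under $\SSS$, it is enough to take $\B \in \QQQ(\K)$ with every $f^\B$ total and show $\B[\L_\F] \in \QQQ(\K[\L_\F]) = \III\SSS\PPP\PPU(\K[\L_\F])$, using \cref{Thm : Q(K) and V(K)}. By the same theorem there is an embedding $h \colon \B \to \prod_{i \in I} \C_i$, where each $\C_i$ is an ultraproduct $\prod_{j \in J_i} \A_j / \mathcal{U}_i$ of members of $\K$. I will then verify three facts.

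\begin{enumerate}
\item[(a)] For each $f \in \F$, $f^{\C_i}$ is total and $\C_i[\L_\F]$ is the ultraproduct $\prod_j \A_j[\L_\F]/\mathcal{U}_i$. Each $\A_j$ satisfies $\forall \vec{x}\,\exists y\, \varphi_f$ by totality, and $\forall \vec{x}, y, z\,(\varphi_f(\vec{x},y) \sqcap \varphi_f(\vec{x},z) \to y \thickapprox z)$ because $f^{\A_j}$ is a function. By Łoś's theorem both sentences transfer to $\C_i$, so $f^{\C_i}$ is total. For the identification, an element $[\langle c_j\rangle]$ satisfies $\varphi_f([\langle \vec{a}_j\rangle], [\langle c_j\rangle])$ in $\C_i$ iff $c_j = f^{\A_j}(\vec{a}_j)$ for $\mathcal{U}_i$-many $j$, again by Łoś. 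This is exactly the ultraproduct interpretation of $g_f$.
\item[(b)] $f^{\prod_i \C_i}$ is total and computed coordinatewise. Pp formulas are preserved by direct products, and the uniqueness quasiequation above is preserved as well. Hence $\prod_i \C_i[\L_\F] = (\prod_i \C_i)[\L_\F]$.
\item[(c)] $h$ preserves each $g_f$. Implicit operations are preserved by homomorphisms, so $h(f^\B(\vec{b})) = f^{\prod \C_i}(h(\vec{b}))$.
\end{enumerate}

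Facts (b) and (c) make $h$ an embedding $\B[\L_\F] \to \prod_i \C_i[\L_\F]$, and by (a) each factor lies in $\PPU(\K[\L_\F])$. This gives $\B[\L_\F] \in \III\SSS\PPP\PPU(\K[\L_\F])$, which completes the second inclusion.

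The main obstacle is step (a): checking via Łoś that the expanded ultraproduct coincides with the ultraproduct of the expansions. This relies on $g_f$ being interpreted through a first order definition that is total and functional in every member of $\K$. The remaining steps are routine preservation arguments.
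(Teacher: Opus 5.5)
Your proof is correct. The paper gives no argument for this statement; it imports it from \cite[Thm.\ 10.5]{CKMIMPv2}. So your proposal is a self-contained substitute for that citation rather than a variant of a proof in the text.

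Your route is the natural direct one. The expansion operator $(\cdot)[\L_\F]$ commutes with $\PPU$, by the fundamental theorem of ultraproducts, since each $\varphi_f$ is first order and $f$ is total on $\K$. It also commutes with $\PPP$, since pp formulas are preserved by direct products. Finally, an embedding between members of $\QQQ(\K)$ on which all $f \in \F$ are total is automatically an $\L_\F$-embedding, because implicit operations are preserved by homomorphisms.

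A few simplifications are available.
\begin{itemize}
\item The transfer of the uniqueness sentence in (a) and (b) is superfluous. Both $\C_i$ and $\prod_i \C_i$ already belong to $\QQQ(\K)$, where $\varphi_f$ defines a partial function by assumption.
\item Your facts (a) and (b) show that every member of $\PPP\PPU(\K[\L_\F])$ has the form $\D[\L_\F]$ with $\D \in \QQQ(\K)$. Hence $\QQQ(\K[\L_\F]) = \III\SSS\PPP\PPU(\K[\L_\F]) \subseteq \mathsf{M}$ follows directly. You can therefore drop the appeal to \cite[Thm.\ 10.3(ii)]{CKMIMPv2} in your first inclusion, and the argument then uses nothing beyond $\QQQ(\K) = \III\SSS\PPP\PPU(\K)$.
\item You never use that the members of $\F$ are extendable. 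This is consistent: under the totality hypothesis, extendability is automatic, because every member of $\QQQ(\K)$ embeds into some $\prod_i \C_i$ on which, by (a) and (b), every $f \in \F$ is total.
\end{itemize}
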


We are now ready to prove \cref{Thm : ICM is pp expansion of RCR}.

\begin{proof}
We begin by proving that the pp expansion of $\mathsf{RCR}$ induced by $\mathcal{F}$ coincides with $\mathsf{ICM}$. To this end, recall from \cref{Thm : root and inv are extendable} that $g^\A = (\,)^*$ and $f_p^\A = r_p$  are total for all $\A \in \mathsf{WRF}$  and prime $p$.
Moreover, $\mathsf{RCR} = \QQQ(\mathsf{WRF})$ by \cref{Prop : generation for RCR and ICM}. Therefore, we can apply \cref{Prop : trick pp expansions} to $\K = \mathsf{WRF}$ and $\mathcal{F}$, obtaining  that  $\QQQ(\mathsf{WRF}[\L_\F])$ is the pp expansion of $\mathsf{RCR}$ induced by $\mathcal{F}$. 
As both $\mathsf{WRF}[\L_\F]$ and $\mathsf{ICF}$ consist of the members of $\mathsf{WRF}$ expanded with the operations $(\,)^*$ and $\{r_p : p \text{ prime}\}$, we conclude that $\mathsf{WRF}[\L_\F] = \mathsf{ICF}$. Thus, $\QQQ(\mathsf{ICF})$ is the pp expansion of $\mathsf{RCR}$ induced by $\mathcal{F}$. Moreover, it coincides with $\mathsf{ICM}$ by \cref{Prop : generation for RCR and ICM}. Therefore, $\mathsf{ICM}$ is the pp expansion of $\mathsf{RCR}$ induced by $\mathcal{F}$, as desired.

In order to prove that this pp expansion is simple, it suffices to show that $\mathsf{ICM} = \mathsf{RCR}[\L_\F]$. As $\mathsf{ICM} = \SSS(\mathsf{RCR}[\mathscr{L}_{\mathcal{F}}])$,  it only remains to show that $\mathsf{RCR}[\mathscr{L}_{\mathcal{F}}]$ is closed under subalgebras. To this end, consider
$\A \leq \B \in \mathsf{RCR}[\mathscr{L}_{\mathcal{F}}]$. We have to show that $f^{\A^-}$ is total for every $f \in \mathcal{F}$. To this end, will use repeatedly the fact that the maps $g^{\B^-}$ and $f_p^{\B^-}$ are total and and defined for every $a \in B$ as
\[
g^{\B^-}(a) = a^{*\B} \, \, \text{ and }\, \, f_p^{\B^-}(a) = r_p^{\B}(a),
\]
which holds because $\B \in \mathsf{RCR}[\mathscr{L}_{\mathcal{F}}]$.

First, let $f = g$ and $a \in A$. Since $\B \in  \mathsf{RCR}[\mathscr{L}_{\mathcal{F}}]$ and $\mathsf{inv}(x, y)$ defines $g$ by \cref{Thm : root and inv are extendable}, we obtain $\B \vDash \mathsf{inv}(a, a^{*\B})$. As $a \in A$ and $\A \leq \B$, we have $a^{*\B} \in A$. 
Since $\mathsf{inv}(a, a^{*\B})$ is a conjunction of equations without free variables, $a,a^{*\B} \in A$, and $\A \leq \B$, we obtain $\A \vDash \mathsf{inv}(a,a^{\B*})$.
Thus,  
$a \in \mathsf{dom}(g^{\A^-})$ because $\mathsf{inv}(x, y)$ defines $g$.

       Next, we consider the case where $f = f_p$ for some prime $p$. Let $a \in A$. Since $\B \in  \mathsf{RCR}[\mathscr{L}_{\mathcal{F}}]$ and $\exists\mathsf{root}_p(x, y)$ defines $f_p$ by \cref{Thm : root and inv are extendable}, we obtain $\B \vDash \exists\mathsf{root}_p(a, r_p^{\B}(a))$. By the definition of $\exists\mathsf{root}_p(x, y)$ there exists $c \in B$ such that 
       \[
       \B \vDash \mathsf{inv}(p, c) \sqcap (r_p^{\B}(a)^p \thickapprox (1-cp) a).
       \]
       As $\mathsf{inv}(x, y)$ defines $g$, the above display amounts to
       \[
       \B \vDash \mathsf{inv}(p,p^{*\B}) \sqcap (r_p^{\B}(a)^p \thickapprox (1-p^{*\B}p)a).
       \]
Observe that $a, p \in A$. Together with $\A \leq \B$, this ensures $r_p^{\B}(a), p^{*\B} \in A$. As equations are preserved by subalgebras,
the above display implies
       \[
       \A \vDash \mathsf{inv}(p,p^{*\B}) \sqcap (r_p^{\B}(a)^p \thickapprox (1-p^{*\B}p)a ).
       \]
  Hence, we conclude that $a \in \mathsf{dom}(f_p^{\A^-})$ because $\exists\mathsf{root}_p(x, y)$ defines $f_p$.
\end{proof}

\begin{Definition}
We denote by $\A^-$ the ring reduct of  an implicitly closed meadow.
\end{Definition}

From \cite[Prop.\ 10.2]{CKMIMPv2} and \cref{Thm : ICM is pp expansion of RCR} we deduce the following.

\begin{Corollary}\label{Cor : subreducts}
The following conditions hold:
\benroman
\item\label{item : subreducts : 1} for every $\A \in \mathsf{ICM}$ we have $\A^- \in \mathsf{RCR}$;
\item\label{item : subreducts : 2} for every $\A \in \mathsf{RCR}$ there exists $\B \in \mathsf{ICM}$ such that $\A \leq \B^-$. 
\eroman
\end{Corollary}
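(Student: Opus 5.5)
Both items can be read off \cref{Thm : ICM is pp expansion of RCR}, which gives $\mathsf{ICM} = \mathsf{RCR}[\L_\F]$. To keep the argument self-contained, I will also check them directly from the description $\mathsf{ICM} = \III\SSS\PPP(\mathsf{ICF})$ and the results already established; this is presumably what the general fact \cite[Prop.\ 10.2]{CKMIMPv2} amounts to in this case.

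For (\ref{item : subreducts : 1}), let $\A \in \mathsf{ICM}$. By definition there is an embedding $h \colon \A \to \prod_{i \in I}\D_i$ with each $\D_i \in \mathsf{ICF}$. Taking ring reducts commutes with direct products. Moreover, an embedding of expanded algebras is in particular an embedding of their ring reducts. Hence $h$ is an embedding of $\A^-$ into $\prod_{i \in I}\D_i^-$. Each $\D_i^-$ is a field, so $\A^- \in \III\SSS\PPP(\mathsf{F}) = \mathsf{RCR}$. Alternatively, one can use the equational axiomatization in \cref{Thm : ICM axioms}: from $x \thickapprox x^2x^*$, if $a^2 = 0$ then $a = a^2a^* = 0$, so $\A^-$ is reduced.

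For (\ref{item : subreducts : 2}), let $\A \in \mathsf{RCR}$. By \cref{Prop : generation for RCR and ICM}, $\mathsf{RCR} = \III\SSS\PPP(\mathsf{WRF})$ (this was established within its proof). So there is a ring embedding $h \colon \A \to \prod_{i \in I}\C_i$ with each $\C_i$ weakly rooted. Each $\C_i$ expands to $\C_i^+ \in \mathsf{ICF}$, with $(\C_i^+)^- = \C_i$. Put $\B = \prod_{i \in I}\C_i^+$. Then $\B \in \PPP(\mathsf{ICF}) \subseteq \mathsf{ICM}$ and $\B^- = \prod_{i \in I}\C_i$. Thus $h$ embeds $\A$ into $\B^-$, and identifying $\A$ with its image gives $\A \leq \B^-$. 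If one insists on a literal subalgebra rather than an isomorphic copy, replace $\B$ by an isomorphic copy containing $A$; this is the usual set-theoretic renaming.

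No step is a real obstacle. The only point needing care is that taking reducts commutes with products and preserves embeddings, which is immediate because operations in products are computed coordinatewise. The nontrivial content, namely that every field extends to a weakly rooted one, was already supplied by \cref{Cor : every field extend to a rooted one}.
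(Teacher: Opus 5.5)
Your argument is correct, but it follows a different route from the paper. The paper gives no direct argument. It obtains both items by combining \cref{Thm : ICM is pp expansion of RCR} with a general result on pp expansions, \cite[Prop.\ 10.2]{CKMIMPv2}, which applies to any pp expansion of any quasivariety. You bypass that machinery and work concretely with $\mathsf{ICM} = \III\SSS\PPP(\mathsf{ICF})$ and $\mathsf{RCR} = \III\SSS\PPP(\mathsf{WRF})$. You use only that reducts commute with products and preserve embeddings, and that every weakly rooted field $\C$ expands to $\C^+ \in \mathsf{ICF}$. This is essentially the same embedding-into-expanded-products step the paper performs in the proof of \cref{Thm : ICM axioms}.

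The two routes buy different things. The paper's is shorter and shows the corollary is an instance of a phenomenon valid for all pp expansions. Yours is self-contained and shows where the real content lies, namely \cref{Cor : every field extend to a rooted one}. That corollary is the concrete form of the extendability of $g$ and the $f_p$.

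One caveat about your opening sentence. Item (i) does follow at once from $\mathsf{ICM} = \mathsf{RCR}[\mathscr{L}_{\mathcal{F}}]$, since every member of the latter is an expansion of a reduced ring. Item (ii), however, cannot be read off that equality alone. It also needs that every reduced commutative ring embeds into one on which $g$ and all the $f_p$ are total. The general result supplies this via extendability, and your direct argument supplies it via weakly rooted extensions.
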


We will also make use of the following observation.

\begin{Corollary} \label{Cor : RCR sub are ICM sub}
Let $\A, \B \in \mathsf{ICM}$. Every ring homomorphism $h \colon \A^- \to \B^-$ is also a homomorphism $h \colon \A \to \B$ of implicitly closed meadows.
\end{Corollary}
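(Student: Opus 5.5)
The plan is to show directly that a ring homomorphism $h \colon \A^- \to \B^-$ commutes with $(\,)^*$ and with every $r_p$. The tool is the fact that these operations are \emph{implicit}, defined by pp formulas, and that pp formulas are preserved by homomorphisms.

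By \cref{Thm : ICM is pp expansion of RCR}, $\mathsf{ICM} = \mathsf{RCR}[\L_\F]$. So for every $\C \in \mathsf{ICM}$ the reduct $\C^-$ lies in $\mathsf{RCR}$, the functions $g^{\C^-}$ and $f_p^{\C^-}$ are total, and the basic operations of $\C$ are exactly
\[
c^{*\C} = g^{\C^-}(c) \, \, \text{ and } \, \, r_p^\C(c) = f_p^{\C^-}(c).
\]
This is the identification already used in the proof of \cref{Thm : ICM is pp expansion of RCR}.

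Now take $a \in A$. By \cref{Thm : root and inv are extendable}, $g$ is defined by $\mathsf{inv}(x,y)$, so $\A^- \vDash \mathsf{inv}(a, a^{*\A})$. Since $\mathsf{inv}$ is a conjunction of equations and $h$ is a ring homomorphism, $\B^- \vDash \mathsf{inv}(h(a), h(a^{*\A}))$. Because $\mathsf{inv}$ defines $g$ in $\B^-$, this gives $h(a^{*\A}) = g^{\B^-}(h(a)) = h(a)^{*\B}$.

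One could also argue abstractly: $g$ is an operation of $\mathsf{RCR}$, hence globally preserved by homomorphisms between members of $\mathsf{RCR}$, and both $\A^-$ and $\B^-$ belong to $\mathsf{RCR}$. The direct argument just makes this explicit.

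For $r_p$ the same approach works. From $\A^- \vDash \exists\mathsf{root}_p(a, r_p^\A(a))$, we pick a witness $c$, push the equations through $h$, and obtain the witness $h(c)$ in $\B^-$. Hence $\B^- \vDash \exists\mathsf{root}_p(h(a), h(r_p^\A(a)))$, since pp formulas are preserved by homomorphisms. Because $\exists\mathsf{root}_p$ defines $f_p$, we conclude $h(r_p^\A(a)) = r_p^\B(h(a))$.

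There is no serious obstacle. The only point needing care is that the expanded operations on a member of $\mathsf{ICM}$ really coincide with the functions defined by the pp formulas on its ring reduct, i.e.\ $\C = \C^-[\L_\F]$. This is exactly what the equality $\mathsf{ICM} = \mathsf{RCR}[\L_\F]$ from \cref{Thm : ICM is pp expansion of RCR} provides.
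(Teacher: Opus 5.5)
Your proof is correct and follows the paper's route. The paper also reduces the statement to $\mathsf{ICM} = \mathsf{RCR}[\L_\F]$ and then cites \cite[Prop.\ 9.5]{CKMIMPv2}, which says that homomorphisms between the reducts preserve the added pp-definable implicit operations; you verify this by hand for $\mathsf{inv}$ and $\exists\mathsf{root}_p$.
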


\begin{proof}
 Recall from \cref{Thm : ICM is pp expansion of RCR} that $\mathsf{ICM} = \mathsf{RCR}[\mathscr{L}_{\mathcal{F}}]$. By  \cite[Prop.\ 9.5]{CKMIMPv2} the latter yields the desired conclusion.
\end{proof}

\section{Amalgamation}

\begin{Definition}
Given a class $\K$ of similar algebras, we say that
\benroman
\item a tuple
$\langle \A, \B, \C, h_1, h_2 \rangle$ is a \emph{span}
in $\K$ when $h_1 \colon \A \to \B$ and $h_2 \colon \A \to \C$ is a pair of embeddings with $\A, \B, \C \in \K$;
\item a span
$\langle \A, \B, \C, h_1, h_2 \rangle$ in $\K$ 
has an \emph{amalgam}
in $\K$ when there exists a pair of embeddings $g_1 \colon \B \to \D$ and $g_2 \colon \C \to \D$ with $\D \in \K$ such that $g_1 \circ h_1 = g_2 \circ h_2$;
\item a member $\A$ of $\K$ is an \emph{amalgamation base} for $\K$ when every span
in $\K$ of the form $\langle \A, \B, \C, h_1, h_2 \rangle$ 
has an amalgam
in $\K$;
\item $\K$ has the \emph{amalgamation property} when every 
span
in $\K$ 
has an amalgam
in $\K$.
\eroman
\end{Definition}

The aim of this section is to establish the following.

\begin{Theorem}\label{Thm : ICM has AP}
    $\mathsf{ICM}$ has the amalgamation property.
\end{Theorem}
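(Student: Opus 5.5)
We will use the standard reduction of amalgamation in a variety to its finitely subdirectly irreducible members. By \cref{Thm : ICM axioms}, $\mathsf{ICM}$ is a variety, and by \cref{Prop : generation for RCR and ICM} we have $\mathsf{ICM}_\textsc{fsi} = \mathsf{ICF}$. Every member of $\mathsf{ICF}$ has a field reduct, hence is simple. The variety $\mathsf{ICM}$ is congruence distributive, since it has a ring reduct and hence a Maltsev term. It also has the congruence extension property, because congruences of commutative rings are determined by ideals. We therefore plan to apply the well-known result of Grätzer–Lakser / Kiss–Márki–Pröhle–Tholen: a congruence distributive variety with the congruence extension property whose finitely subdirectly irreducible members are closed under subalgebras has the amalgamation property, provided every span of members of $\mathsf{ICM}_\textsc{fsi}$ has an amalgam in $\mathsf{ICM}$. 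Closure of $\mathsf{ICF}$ under $\SSS$ is \cref{Prop : ICF closed under S and Pu}. The congruence extension property we would verify directly. Ideals of the ring reduct of an $\A \in \mathsf{ICM}$ are automatically compatible with $(\,)^*$ and $r_p$, because $\A/I$ is again reduced and expands to an $\mathsf{ICM}$ (one can check this via $\mathsf{id}$ on $\mathsf{ICM}$ being defined by pp formulas, or by \cref{Cor : RCR sub are ICM sub}). Moreover, an ideal $I$ of a subalgebra $\A \leq \B$ extends to the ideal $IB$ with $IB \cap A = I$. This last property uses that in a meadow every principal ideal is generated by an idempotent $aa^*$, so $b \in IB\cap A$ forces $b = b\,e$ for some idempotent $e \in I$.

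The core step is amalgamating a span $\langle \A, \B, \C, h_1, h_2\rangle$ in $\mathsf{ICF}$. Here $\A,\B,\C$ are weakly rooted fields of a common characteristic, expanded by $(\,)^*$ and $r_p$. We would take $\D = \mathsf{acl}(\E)^+$, where $\E$ is a field into which both $\B$ and $\C$ embed over $\A$. To build $\E$, we use that the tensor product $\B\otimes_\A \C$ is a nonzero ring, since it is a tensor of nonzero vector spaces. We then take a prime (e.g.\ maximal) ideal $P$ and let $\E$ be the fraction field of $(\B\otimes_\A\C)/P$. The maps $\B,\C \to \E$ are field homomorphisms, hence embeddings, and they agree on $\A$. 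By \cref{Prop : acl(A) in ICF}, $\mathsf{acl}(\E)^+ \in \mathsf{ICF}$. By \cref{Cor : RCR sub are ICM sub}, ring embeddings between members of $\mathsf{ICM}$ automatically preserve $(\,)^*$ and $r_p$, so these are embeddings in $\mathsf{ICM}$.

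If the general transfer theorem is not to be invoked, an alternative plan works directly. Embed the span into products of $\mathsf{ICF}$s and amalgamate coordinatewise, using that $\mathsf{ICM}$ is a discriminator variety.

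We expect the main obstacle to be the careful justification of the congruence extension property, together with the reduction theorem's hypotheses. The field-level amalgamation is routine.
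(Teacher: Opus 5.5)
Your route is sound after one repair, and it is genuinely different from the paper's. The repair concerns congruence distributivity. A Maltsev term gives congruence \emph{permutability}, not distributivity. Commutative rings have a Maltsev term, yet the ideal lattice of $\mathbb{F}_2[x,y]/(x,y)^2$ contains a copy of $M_3$. Since congruence distributivity is a hypothesis of the transfer theorem you invoke, you need a different argument for it. Two are available:
\begin{itemize}
\item $\mathsf{ICM}$ is a discriminator variety (\cref{Rem : discriminator}), hence arithmetical, hence congruence distributive.
\item Directly: once congruences of $\A \in \mathsf{ICM}$ are identified with ideals of $\A^-$, the identity $x = x\cdot xx^*$ gives $I \cap J = IJ$. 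Then $I\cap(J+K) = IJ+IK = (I\cap J)+(I\cap K)$.
\end{itemize}
The discriminator term also yields the congruence extension property for free, as a standard property of discriminator varieties.

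The other steps check out.
\begin{itemize}
\item Ideals of $\A^-$ are $\mathsf{ICM}$-congruences: the pp formulas $\mathsf{inv}$ and $\exists\mathsf{root}_p$ pass to the regular, hence reduced, quotient $\A^-/I$, where they are functional.
\item Your proof of $IB \cap A = I$ is right, but it needs that \emph{finitely generated} ideals are generated by idempotents, not only principal ones. For $b = \sum_k i_k c_k$, take $e = 1-\prod_k(1-i_k i_k^*) \in I$; then $b = eb \in I$.
\item The field-level amalgam through $\B^-\otimes_{\A^-}\C^-$, a maximal ideal, $\mathsf{acl}(\E)^+$ and \cref{Cor : RCR sub are ICM sub} is correct. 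Because this amalgam lies in $\mathsf{ICF}$ itself, whose members are simple, it meets the hypothesis of any standard formulation of the transfer theorem: an amalgam in the variety, or one-sided amalgamation inside $\mathsf{ICM}_\textsc{fsi}$.
\item Your ``alternative plan'' of amalgamating coordinatewise in products is not an argument as stated, since $h_1$ and $h_2$ need not respect any product decomposition.
\end{itemize}

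The paper never reduces to finitely subdirectly irreducible members. It notes that ring reducts of members of $\mathsf{ICM}$ are von Neumann regular (\cref{Prop : ICM reducts are regular}). It then amalgamates the reducts among commutative regular rings (\cref{Thm : regular rings have AP}), embeds the regular amalgam into the reduct of a member of $\mathsf{ICM}$ (\cref{Cor : subreducts}), and lifts the ring embeddings by \cref{Cor : RCR sub are ICM sub}.

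Given that ring-theoretic black box, the paper's proof takes a few lines. The same black box also yields \cref{Prop : regular rings are Amalagamtion bases for RCR}, which the paper needs later for the main theorem and which your route would have to obtain separately. Your approach trades that black box for a universal-algebraic transfer theorem plus elementary field theory. It makes visible that amalgamation in $\mathsf{ICM}$ is inherited from amalgamation of fields. The cost is verifying congruence distributivity and the congruence extension property, both most cheaply obtained from the discriminator term.
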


This result contrasts with the case of $\mathsf{RCR}$, a class that lacks the amalgamation property (see, e.g., \cite[p.\ 426]{CorAPR}, where reduced commutative rings are called ``commutative semiprime rings with identity'').
The next concept will be instrumental for proving \cref{Thm : ICM has AP} (see \cite[Def.~4]{vNeu36}).

\begin{Definition}
A commutative ring $\A$ is said to be \emph{(von Neumann) regular} when for every $a \in A$ there exists $b \in A$ such that  $a=a^2b$.
\end{Definition}
It is immediate that every regular commutative ring is reduced. Moreover, we have the following.

\begin{Proposition}\label{Prop : ICM reducts are regular}
    Let $\A \in \mathsf{ICM}$. Then $\A^-$ is a regular commutative ring.
\end{Proposition}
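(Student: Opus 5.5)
The plan is to witness regularity directly with the weak inverse. Given $\A \in \mathsf{ICM}$ and $a \in A$, I will take $b = a^{*\A}$. This element lies in $A$ and is also an element of the ring reduct $\A^-$, since $\A^-$ has the same universe as $\A$.

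The key step is to invoke \cref{Thm : ICM axioms}. It states that $\mathsf{ICM}$ satisfies the equation $x \thickapprox x^2 x^*$, so $a = a^2 a^{*\A}$ holds in $\A$. The operations $+$ and $\cdot$ of $\A^-$ are those of $\A$, so the same identity $a = a^2 b$ holds in $\A^-$. It then remains to check that $\A^-$ is a commutative ring. This follows from \cref{Cor : subreducts}(\ref{item : subreducts : 1}), which gives $\A^- \in \mathsf{RCR}$; it also follows directly from the ring axioms included in \cref{Thm : ICM axioms}.

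There is essentially no obstacle. The only point requiring care is that the regularity witness is required to lie in the ring itself, and this is automatic because $(\,)^*$ is a total basic operation of $\A$.
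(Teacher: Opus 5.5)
Your proof is correct and matches the paper's own proof. The paper likewise notes that $\A^-$ is clearly a commutative ring, takes $b = a^*$, and applies the equation $x \thickapprox x^2 x^*$ from \cref{Thm : ICM axioms} to get $a = a^2 b$.
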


\begin{proof}
    Clearly, $\A^-$ is a commutative ring. To prove that it is regular, consider $a \in A$ and let $b = a^*$. From \cref{Thm : ICM axioms} it follows that $a =  a^2a^*  = a^2 b$. 
\end{proof}

For the present purpose, the importance of regular rings derives from the following fact (see, e.g., \cite[Thm.~1.6]{CorAPR}). 

\begin{Theorem} \label{Thm : regular rings have AP}
    The class of regular commutative rings has the amalgamation property. 
\end{Theorem}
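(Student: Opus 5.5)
The plan is to reduce the statement to amalgamation of fields, using the equational theory of meadows and a general transfer principle for congruence-distributive varieties.

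\textbf{Step 1: identify regular commutative rings with meadows.} In a reduced commutative ring, an element $b$ with $a^2b=a$ and $ab^2=b$ is unique when it exists; this follows as in \cref{Claim : extendability} by working in each field factor. In a regular ring such a $b$ always exists: if $a=a^2c$, take $b=ac^2$. Ring homomorphisms preserve solutions of the equations $\mathsf{inv}(x,y)$. Hence every regular commutative ring carries a unique expansion by a total operation $(\,)^*$, and every ring homomorphism (in particular every ring embedding) between regular rings preserves $(\,)^*$. Conversely, meadows have regular ring reducts, as in \cref{Prop : ICM reducts are regular}. So the category of regular commutative rings with ring embeddings is isomorphic to $\mathsf{M}$ with its embeddings, and it suffices to show that the variety $\mathsf{M}$ of \cref{Thm : meadows axioms} has the amalgamation property.

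\textbf{Step 2: structural properties of $\mathsf{M}$.} By \cref{Thm : RFSI members of Q(K)} and the argument of \cref{Prop : generation for RCR and ICM}, we have $\mathsf{M}_\textsc{fsi}=\mathsf{ZTF}$. This class is closed under subalgebras, since a subalgebra of a zero-totalized field is closed under inverses and is therefore a field. Moreover, $\mathsf{M}$ is a discriminator variety, witnessed on $\mathsf{ZTF}$ by
\[
t(x,y,z)=(1-(x-y)(x-y)^*)z+(x-y)(x-y)^*x.
\]
Consequently $\mathsf{M}$ is congruence distributive and has the congruence extension property.

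\textbf{Step 3: amalgamation for zero-totalized fields.} Let $\A\leq\B$ and $\A\leq\C$ be fields. Since $\B\otimes_\A\C$ is a nonzero ring (it is a tensor product of nonzero vector spaces over a field), it has a prime ideal $P$. Let $\D=\mathsf{acl}(\mathsf{frac}((\B\otimes_\A\C)/P))$. The canonical maps from $\B$ and $\C$ into $\D$ are field homomorphisms, hence embeddings, and they agree on $A$. Expanding $\D$ by $(\,)^*$, these maps become embeddings in $\mathsf{ZTF}$, because field embeddings preserve weak inverses.

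\textbf{Step 4: transfer to $\mathsf{M}$ (the main obstacle).} I would invoke the Metcalfe--Montagna--Tsinakis criterion: a congruence-distributive variety with the congruence extension property, whose finitely subdirectly irreducible members are closed under subalgebras, has the amalgamation property provided every span of finitely subdirectly irreducible members has an amalgam in the variety. Step 3 supplies exactly this one-sided amalgamation. If one prefers to avoid citing that criterion, I would instead amalgamate directly. Given a span $\A\leq\B,\C$ in $\mathsf{M}$, for each pair of prime ideals $I$ of $\B$ and $J$ of $\C$ with $I\cap A=J\cap A$, amalgamate $\B/I$ and $\C/J$ over $\A/(I\cap A)$ as in Step 3. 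Then embed $\B$ and $\C$ into the product of all these amalgams.

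The delicate point of the direct route is injectivity. For $b\neq 0$ in $\B$, one needs a prime $I$ with $b\notin I$ and some prime $J$ of $\C$ lying over $I\cap A$. Such a $J$ exists because, by the congruence extension property (in regular rings, ideals of $\A$ extend to ideals of $\C$ with the same trace), every prime of $\A$ is the trace of a prime of $\C$. This lying-over argument is where I expect most of the care to be needed.
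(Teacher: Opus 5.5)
Your proof is correct, and it takes a genuinely different route from the paper. The paper gives no proof of this statement; it quotes it from the literature \cite[Thm.~1.6]{CorAPR}. You instead derive it from the paper's own machinery, and each piece checks out:
\begin{enumerate}
\item Step~1 shows that regular commutative rings are term equivalent to meadows. The choice $b=ac^2$ works, and the symmetry of $\mathsf{inv}(x,y)$ together with uniqueness gives $x\thickapprox x^{**}$. Hence ring embeddings between regular rings are automatically meadow embeddings.
\item $\mathsf{M}$ is a discriminator variety with $\mathsf{M}_{\textsc{fsi}}=\mathsf{ZTF}$ closed under $\SSS$.
\item Spans of fields amalgamate through a prime ideal of $\B\otimes_{\A}\C$.
\end{enumerate}
Since that field amalgam lies in $\mathsf{ZTF}=\mathsf{M}_{\textsc{fsi}}$ itself, the hypothesis of the Metcalfe--Montagna--Tsinakis transfer theorem holds in whichever published form you cite. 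In particular, $\mathsf{M}_{\textsc{fsi}}$ has the one-sided amalgamation property.

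As for what each approach buys: the citation keeps the paper short, while your argument makes the result essentially self-contained, and your direct construction shows that amalgams can be taken to be products of fields. Your method also runs verbatim for $\mathsf{ICM}$. That class is a discriminator variety with $\mathsf{ICM}_{\textsc{fsi}}=\mathsf{ICF}$ closed under $\SSS$. A field amalgam followed by algebraic closure is an amalgam in $\mathsf{ICF}$ by \cref{Cor : RCR sub are ICM sub}. So you would get \cref{Thm : ICM has AP} without passing through ring reducts. The regular-ring statement is still needed for \cref{Prop : regular rings are Amalagamtion bases for RCR}, and your direct construction supplies it.

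In the direct route, make the lying-over step fully explicit. The congruence extension property only yields an ideal $K$ of $C$ whose trace on $A$ is $\mathfrak{p}=I\cap A$. Concretely, take $K=\mathfrak{p}C$: any $x=\sum p_ic_i\in A$ satisfies $x=ex\in\mathfrak{p}$, where $e\in\mathfrak{p}$ is an idempotent generating $(p_1,\dots,p_n)$ in $A$. To get a prime, take any prime $J\supseteq K$ of $C$. Then $J\cap A$ is a proper ideal containing $\mathfrak{p}$. Equality holds because prime ideals of regular rings are maximal: in a regular domain $R/P$, the relation $a(1-ab)=0$ forces $a=0$ or $ab=1$. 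The same fact is what lets you apply Step~3 to $\B/I$, $\C/J$ and $\A/(I\cap A)$, so state it explicitly.
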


We are now ready to prove \cref{Thm : ICM has AP}.

\begin{proof}
Let $\langle \A, \B, \C, h_1, h_2 \rangle$ be 
a span
in $\mathsf{ICM}$. We need to show that it 
has an amalgam
in $\mathsf{ICM}$. First, observe that $\langle \A^-, \B^-, \C^-, h_1, h_2 \rangle$ is 
a span
in the class of regular commutative rings by  \cref{Prop : ICM reducts are regular}. By \cref{Thm : regular rings have AP} there exist a regular commutative ring $\D$ and a pair of ring embeddings $g_1 \colon \B^- \to \D$ and $g_2 \colon \C^- \to \D$ such that $g_1 \circ h_1 = g_2 \circ h_2$. Next, recall that every regular commutative ring is reduced, whence $\B^-, \C^-, \D \in \mathsf{RCR}$.
 In view of Corollary~\ref{Cor : subreducts}\eqref{item : subreducts : 2}, we may assume that $\D = \boldsymbol{E}^-$ for some $\boldsymbol{E} \in \mathsf{ICM}$, and view $g_1$ and $g_2$ as  
 ring embeddings $g_1 \colon \B^- \to \boldsymbol{E}^-$ and $g_2 \colon \C^- \to \boldsymbol{E}^-$ with $\B, \C, \boldsymbol{E} \in \mathsf{ICM}$. Hence, we can apply \cref{Cor : RCR sub are ICM sub}, obtaining that $g_1$ and $g_2$ can also be viewed as homomorphisms $g_1 \colon \B \to \boldsymbol{E}$ and $g_2 \colon \C \to \boldsymbol{E}$ of implicitly closed meadows. Since $g_1 \circ h_1 = g_2 \circ h_2$, we conclude that   $\langle \A, \B, \C, h_1,h_2 \rangle$ 
 has an amalgam 
 in $\mathsf{ICM}$.
\end{proof}

Recall that every regular commutative ring is reduced.

\begin{Proposition} \label{Prop : regular rings are Amalagamtion bases for RCR}
    Every regular commutative ring is an
    amalgamation base for $\mathsf{RCR}$.
\end{Proposition}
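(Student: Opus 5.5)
Let $\A$ be a regular commutative ring and $\langle \A, \B, \C, h_1, h_2 \rangle$ a span in $\mathsf{RCR}$. The plan is to pass to $\mathsf{ICM}$, amalgamate there using \cref{Thm : ICM has AP}, and then take ring reducts. The point that needs care is that $\A$ itself must carry an $\mathsf{ICM}$ structure compatible with the embeddings into the completions of $\B$ and $\C$. Regularity of $\A$ supplies weak inverses, but $\A$ need not be closed under weak prime roots. So I would not try to expand $\A$ directly. Instead, I would work with the subalgebra generated by $A$ inside some implicitly closed meadow extending $\B$.

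First, by \cref{Cor : subreducts}\eqref{item : subreducts : 2}, choose $\B', \C' \in \mathsf{ICM}$ with $\B \leq \B'^-$ and $\C \leq \C'^-$. Next, I use regularity of $\A$. In a reduced ring, an element $b$ with $a^2b = a$ yields $c = ab^2$ satisfying $\mathsf{inv}(a, c)$, and such a witness is unique. Since $\mathsf{inv}$ defines the implicit operation $g$ of \cref{Thm : root and inv are extendable}, which is preserved by homomorphisms, $h_1(a^{*})$ is the weak inverse of $h_1(a)$ in $\B'$, and similarly for $h_2$ in $\C'$. Hence $h_1$ and $h_2$ commute with $(\,)^*$ on $A$.

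The main obstacle is the weak roots, which need not lie in $A$. To handle them, let $\D_1 = \mathsf{Sg}^{\B'}(h_1[A])$ and $\D_2 = \mathsf{Sg}^{\C'}(h_2[A])$. I want an isomorphism $\D_1 \cong \D_2$ extending $h_2 \circ h_1^{-1}$. For this, I would show that for all terms $t, s$ of $\mathsf{ICM}$ and $a_1, \dots, a_n \in A$, the equation $t(h_1\vec a) = s(h_1\vec a)$ holds in $\B'$ exactly when $t(h_2\vec a) = s(h_2\vec a)$ holds in $\C'$.

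To prove this, first eliminate $r_p$ and $(\,)^*$ from terms. The graph of each operation is pp definable by \cref{Thm : root and inv are extendable}, and in an $\mathsf{ICM}$ every pp formula holds exactly when its witnesses are the values of the operations. So an equation between terms becomes a pp formula $\exists \vec z\, \psi(\vec a, \vec z)$ in ring language, whose witnesses are uniquely determined.

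Then decompose into fields. By \cref{Prop : generation for RCR and ICM}, the relevant truth is checked coordinatewise in implicitly closed fields. Each prime ideal of $\B$ restricts to a prime ideal of the regular ring $\A$, and in a regular ring every prime ideal is maximal, so $\A/P$ is a field. The value of a term at $\vec a$ in a coordinate then depends only on the field $\A/P$ and the weakly rooted field containing it. Since $\A/P$ is a field, the generated implicitly closed subfield is the perfect closure of $\A/P$ (or $\A/P$ itself in characteristic $0$). This is determined up to unique isomorphism over $\A/P$, because $p$-th roots are unique.

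It remains to match coordinates on the two sides. Every prime $P$ of $\A$ arises from primes of both $\B'^-$ and $\C'^-$, since regular extensions are integral-like. More precisely, I would use that $\A \leq \B'^-$ with $\A$ regular satisfies lying-over: for a regular ring, $P$ being maximal, the ideal $P\B'$ is proper. This lets me conclude that $\D_1$ and $\D_2$ both embed subdirectly into the same product of perfect closures of the fields $\A/P$, with the same image. This yields the desired isomorphism, and I expect this to be the hardest step.

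Finally, identify $\D_1$ with $\D_2$ to obtain a span $\langle \D_1, \B', \C', \iota, \iota' \rangle$ in $\mathsf{ICM}$. Amalgamate it by \cref{Thm : ICM has AP} to get $\boldsymbol{E}$. Taking ring reducts and restricting the embeddings to $B$ and $C$ then gives an amalgam in $\mathsf{RCR}$ by \cref{Cor : subreducts}\eqref{item : subreducts : 1}.
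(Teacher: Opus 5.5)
Your argument is correct, but it takes a genuinely different and much longer route than the paper.

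\textbf{The paper's proof.} The paper never amalgamates in $\mathsf{ICM}$. It uses \cref{Cor : subreducts}\eqref{item : subreducts : 2} and \cref{Prop : ICM reducts are regular} only to enlarge $\B$ and $\C$ to \emph{regular} rings $\B'^-$ and $\C'^-$. Since $\A$ is regular, $\langle \A, \B'^-, \C'^-, h_1, h_2\rangle$ is then a span of regular commutative rings. The paper amalgamates it directly by \cref{Thm : regular rings have AP} and restricts the embeddings to $B$ and $C$. In the class of regular rings, $\A$ is already a legitimate base object carrying no extra structure, so the weak-root compatibility problem that drives your construction never arises.

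\textbf{Your route.} You invoke \cref{Thm : ICM has AP}, which the paper itself derives from \cref{Thm : regular rings have AP}. So you go around the same cited fact, at the price of an extra lemma. That lemma says $\mathsf{Sg}^{\B'}(h_1[A])$ is, up to isomorphism over $\A$, independent of the ambient implicitly closed meadow. Concretely, it is the subalgebra generated by $A$ inside the product, over $P \in \mathsf{Spec}(\A)$, of the perfect closures of $\A/P$ (expanded to implicitly closed fields). The lemma is true and of some independent interest, but it is not needed for this statement.

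\textbf{Two steps in your sketch need tightening.}

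First, lying-over does not follow from maximality of $P$; think of $2\mathbb{Z}$ in $\mathbb{Z} \leq \mathbb{Q}$. Nor is it a matter of being ``integral-like.'' It follows from regularity. Suppose $1 = \sum_j p_j b_j$ with $p_j \in P$ and $b_j \in B'$. Then $(p_1, \dots, p_n) = e\A$ for some idempotent $e \in P$, so $p_j = e p_j$ for each $j$. Hence $1 = e$ in $\B'$, and by injectivity $1 = e$ in $\A$, a contradiction.

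Second, you must decompose $\B'$ over \emph{all} prime ideals $Q$ of $\B'^-$. Each $\B'/Q$ is an implicitly closed field, because a ring homomorphism into a field preserves the implicit operations $g$ and $f_p$, so $Q$ is an $\mathsf{ICM}$-congruence. An arbitrary representation coming from $\mathsf{ICM} = \III\SSS\PPP(\mathsf{ICF})$ is not enough, since it need not realize every prime of $\A$. For example, $\A = \prod_n \mathbb{F}_2$ represented by its coordinate projections misses the nonprincipal maximal ideals. Your well-definedness step for the map into the product over all of $\mathsf{Spec}(\A)$ depends on every prime of $\A$ occurring.
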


\begin{proof}
    Let $\langle \A, \B, \C, h_1, h_2 \rangle$ be 
    a span
    in $\mathsf{RCR}$ with $\A$ a regular commutative ring. We need to show that it 
     has an amalgam
    in $\mathsf{RCR}$.
    From \cref{Cor : subreducts}\eqref{item : subreducts : 2} and \cref{Prop : ICM reducts are regular} it follows that there exists a pair of regular commutative rings $\B'$ and $\C'$ such that $\B \leq \B'$ and $\C \leq \C'$. Therefore, $\langle \A, \B', \C', h_1, h_2 \rangle$ is 
     a span 
    in the class of regular commutative rings. By \cref{Thm : regular rings have AP} there exists a pair of embeddings $g_1 \colon \B' \to \D$ and $g_2 \colon \C' \to \D$ with $\D$ a regular commutative ring such that $g_1 \circ h_1 = g_2 \circ h_2$. Clearly, the restrictions $g_1^* \colon \B \to \D$ and $g_2^* \colon \C \to \D$ are also embeddings such that  $g_1^* \circ h_1 = g_2^* \circ h_2$. As $\D$ is reduced (because it is regular), we are done.
\end{proof}

\section{The main result}

The aim of this section is to prove our main result, which takes the following form.

\begin{Theorem}\label{Thm : main}
    $\mathsf{ICM}$ is the Beth companion of $\mathsf{RCR}$ and, as such, it is simple.
\end{Theorem}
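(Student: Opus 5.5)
By \cref{Thm : ICM is pp expansion of RCR}, $\mathsf{ICM}$ is already a simple pp expansion of $\mathsf{RCR}$, namely $\mathsf{ICM} = \mathsf{RCR}[\L_\F]$. So it only remains to show that every monomorphism in $\mathsf{ICM}$ is regular. Since $\mathsf{ICM}$ is a quasivariety (indeed a variety by \cref{Thm : ICM axioms}), the dominion criterion recalled in the introduction reduces this to proving $\mathsf{d}_{\mathsf{ICM}}(\A, \B) = A$ for all $\A \leq \B \in \mathsf{ICM}$. Simplicity of the Beth companion will then follow from \cref{Thm : ICM is pp expansion of RCR}, and uniqueness up to term equivalence from \cite[Thm.\ 11.7]{CKMIMPv2}.

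To compute dominions I would use amalgamation. \Cref{Thm : ICM has AP} gives amalgams, but I actually need the strong version: for $\A \leq \B$, the two copies of $\B$ can be amalgamated over $\A$ so that their images intersect exactly in the image of $\A$. My plan is therefore to show that $\mathsf{ICM}$ has the strong amalgamation property. Then take $b \in B \smallsetminus A$ and a strong amalgam $g_1, g_2 \colon \B \to \D$ of the span $\langle \A, \B, \B, \mathrm{id}, \mathrm{id}\rangle$. These two homomorphisms agree on $A$ but have $g_1(b) \neq g_2(b)$, so $b \notin \mathsf{d}_{\mathsf{ICM}}(\A, \B)$.

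The main obstacle is strong amalgamation. I would first try to quote the known result that commutative regular rings have strong amalgamation (this is the form in which \cite{CorAPR} proves \cref{Thm : regular rings have AP}) and then transfer it via \cref{Cor : subreducts} and \cref{Cor : RCR sub are ICM sub}, exactly as in the proof of \cref{Thm : ICM has AP}.

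If only plain amalgamation is available, I would instead argue directly with $\mathsf{ICM}_\textsc{fsi} = \mathsf{ICF}$ (\cref{Prop : generation for RCR and ICM}). Suppose $b \in B \smallsetminus A$. It suffices to find $\C \in \mathsf{ICF}$ and homomorphisms $g, h \colon \B \to \C$ agreeing on $A$ with $g(b) \neq h(b)$.
\begin{enumerate}
\item By subdirect decomposition, $\B$ embeds in a product of implicitly closed fields. For each coordinate, the relevant field $\B/I$ contains the image of $\A$ as a subfield that is closed under $p$-roots when the characteristic is $p$, since $\A$ is closed under $r_p$ and ${}^*$. Hence that image is perfect by \cref{Thm : conditions for perfect}.
\item For a perfect subfield $K \leq L$ and an element $c \in L \smallsetminus K$, there is a second embedding of $L$ into $\mathsf{acl}(L(x))$ fixing $K$ and moving $c$. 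If $c$ is transcendental over $K$, use \cref{Prop : special field ext homs}(ii). If $c$ is algebraic, use separability, which gives another root of $\mu_c$, together with \cref{Prop : special field ext homs}(i) and \cref{Prop : extensing acl embeddings}.
\item Handling a non-field $\A$ requires a Boolean/idempotent argument. The idempotents $aa^*$ let me separate coordinates, and I must choose a prime ideal at which $b$ differs from every element of $A$, or else use pairs of prime ideals as in the dominion formula in the introduction.
\end{enumerate}
I expect this last step, localizing the failure $b \notin A$ to a single pair of primes, to be the hardest part. Regularity of $\B^-$ (\cref{Prop : ICM reducts are regular}) would be the key tool there.
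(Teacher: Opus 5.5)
There is a genuine gap. Your reduction of regularity of monomorphisms to $\mathsf{d}_{\mathsf{ICM}}(\A,\B)=A$ is fine, and so is your appeal to \cref{Thm : ICM is pp expansion of RCR} for simplicity. But neither of your two routes to trivial dominions works as stated.

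\textbf{The first route rests on a false premise.} Commutative regular rings do \emph{not} have the strong amalgamation property; \cref{Thm : regular rings have AP} gives only plain amalgamation. Let $K=\mathbb{F}_p(t^p)\leq L=\mathbb{F}_p(t)$, and let $g_1,g_2\colon L\to D$ be ring homomorphisms into a reduced ring that agree on $K$. Then $p1=0$ in $D$, so $(g_1(t)-g_2(t))^p=g_1(t^p)-g_2(t^p)=0$, and reducedness gives $g_1(t)=g_2(t)$. Hence $t\in\mathsf{d}_{\mathsf{RCR}}(K,L)\smallsetminus K$, and the span $\langle K,L,L,\mathrm{id},\mathrm{id}\rangle$ has no strong amalgam among reduced rings. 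This is exactly the obstruction the operations $r_p$ are added to remove. So strong amalgamation of $\mathsf{ICM}$ cannot be inherited from regular rings.

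\textbf{In the fallback, step 3 carries the real content and you leave it open.} It is not a formality, because $b\notin A$ need not be detected in any single factor. For instance, let $\A$ be the diagonal of $\D\times\D$ with $\D\in\mathsf{ICF}$ and $b=\langle d,e\rangle$ with $d\neq e$. Each projection of $b$ lies in the corresponding projection of $A$. You would therefore need a Baker--Pixley-style patching argument over the spectrum to localize to a pair of primes, plus a separate analysis of subalgebras of a product of two fields.

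The paper sidesteps all of this with \cref{Thm : our CP theorem}. Since $\mathsf{ICM}$ is congruence permutable (\cref{Rmk : dominions vs SES}) and has plain amalgamation (\cref{Thm : ICM has AP}), it suffices to show $\mathsf{d}_{\mathsf{ICM}}(\A,\B)=A$ for $\A\leq\B\in\mathsf{ICM}_\textsc{fsi}=\mathsf{ICF}$. That is precisely your field case (\cref{Prop : main}), and this reduction is the missing idea.

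\textbf{Even the field case needs a fix.} \cref{Prop : extensing acl embeddings} does not extend an embedding of $K(c)$ to all of $L$ when $L$ is not algebraic over $K(c)$. The paper avoids this as follows:
\begin{itemize}
\item The field $\A^-(b)$ is an amalgamation base for $\mathsf{RCR}$ (\cref{Prop : regular rings are Amalagamtion bases for RCR}).
\item So by \cref{Prop : doms computable in subalgebras amalgamation base} it is enough to separate $b$ inside $\A^-(b)$.
\item The separating pair is then transported to $\mathsf{ICM}$ via \cref{Cor : subreducts} and \cref{Cor : RCR sub are ICM sub}.
\end{itemize}
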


As a consequence of \cref{Thm : main}, we obtain a categorical description of implicitly closed meadows.

\begin{Corollary}\label{Cor : categorical description}
    Up to term equivalence, $\mathsf{ICM}$ is    the unique quasivariety $\K$ in which monomorphisms are regular for which the forgetful functor $U \colon \K \to \mathsf{RCR}$ is well defined and induces an isomorphism from $\K$ to a mono-reflective subcategory of $\mathsf{RCR}$.
\end{Corollary}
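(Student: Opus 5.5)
The plan is to derive this directly from \cref{Thm : main}, by combining the categorical characterization of simple pp expansions in \cref{Thm : main pp} with the uniqueness of Beth companions up to term equivalence \cite[Thm.\ 11.7]{CKMIMPv2}. The argument has two halves: $\mathsf{ICM}$ has the stated property, and every quasivariety with that property is term equivalent to $\mathsf{ICM}$.

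\emph{Existence.} By \cref{Thm : ICM axioms}, $\mathsf{ICM}$ is a variety, hence a quasivariety. By \cref{Thm : main}, $\mathsf{ICM}$ is a Beth companion of $\mathsf{RCR}$, so its monomorphisms are regular by definition. Moreover, it is simple, i.e., $\mathsf{ICM} = \mathsf{RCR}[\L_\F]$ (this is \cref{Thm : ICM is pp expansion of RCR}). Applying the left-to-right direction of \cref{Thm : main pp} with $\K = \mathsf{RCR}$ and $\M = \mathsf{ICM}$ gives the remaining two facts. First, the forgetful functor $U \colon \mathsf{ICM} \to \mathsf{RCR}$ is well defined; concretely, this is \cref{Cor : subreducts}\eqref{item : subreducts : 1}, while \cref{Cor : RCR sub are ICM sub} shows that $U$ is full. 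Second, $U$ induces an isomorphism onto a mono-reflective subcategory of $\mathsf{RCR}$.

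\emph{Uniqueness.} Let $\K$ be a quasivariety whose monomorphisms are regular, whose forgetful functor $U \colon \K \to \mathsf{RCR}$ is well defined, and for which $U$ induces an isomorphism onto a mono-reflective subcategory of $\mathsf{RCR}$. By the right-to-left direction of \cref{Thm : main pp}, $\K$ is a simple pp expansion of $\mathsf{RCR}$. In particular, $\K$ is a pp expansion of $\mathsf{RCR}$, and since its monomorphisms are regular, it is a Beth companion of $\mathsf{RCR}$. By \cite[Thm.\ 11.7]{CKMIMPv2}, any two Beth companions of $\mathsf{RCR}$ are term equivalent. Hence $\K$ is term equivalent to $\mathsf{ICM}$.

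The only step needing care is bookkeeping rather than mathematics. I must check that the hypotheses of \cref{Thm : main pp} and \cite[Thm.\ 11.7]{CKMIMPv2} match the present setting: $\K$ is a quasivariety in a language extending $\L$, and ``well-defined forgetful functor'' means that the $\L$-reducts of members of $\K$ lie in $\mathsf{RCR}$. I also need the notion of term equivalence in the uniqueness theorem to be the one used in the statement, so that term equivalence is an equivalence relation and the characterization is invariant under it. Once this is confirmed, the two directions of \cref{Thm : main pp} yield the corollary.
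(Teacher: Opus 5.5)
Your proposal is correct and takes essentially the same approach as the paper. The paper likewise combines \cref{Thm : main} with the uniqueness of Beth companions up to term equivalence \cite[Thm.\ 11.7]{CKMIMPv2}, and then uses \cref{Thm : main pp} to translate ``simple pp expansion'' into the categorical condition; you just write out the existence and uniqueness halves separately where the paper compresses them into two sentences.
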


\begin{proof}
    As Beth companions are unique up to term equivalence (see \cite[Thm.\ 11.7]{CKMIMPv2}), \cref{Thm : main} implies that $\mathsf{ICM}$ is (up to term equivalence) the only simple pp expansion of $\mathsf{RCR}$ in which monomorphisms are regular. Therefore, the desired results follows from 
    \cref{Thm : main pp}.
\end{proof}

The remainder of this section is devoted to proving \cref{Thm : main}. Recall from \cref{Thm : ICM is pp expansion of RCR} that $\mathsf{ICM}$ is a simple pp expansion of $\mathsf{RCR}$.\ Therefore, the proof of \cref{Thm : main} reduces to verifying the following.

\begin{Theorem} \label{Thm : ICM has SES}
 Monomorphisms are regular in $\mathsf{ICM}$. 
\end{Theorem}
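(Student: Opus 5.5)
Since $\mathsf{ICM}$ is a variety, hence a quasivariety, it suffices by the dominion criterion recalled in the Introduction to show that $\mathsf{d}_{\mathsf{ICM}}(\A, \B) = A$ for all $\A \leq \B \in \mathsf{ICM}$. Fix such $\A \leq \B$ and $b \in B \smallsetminus A$. The task is to produce $\C \in \mathsf{ICM}$ and homomorphisms $g, h \colon \B \to \C$ that agree on $A$ but satisfy $g(b) \neq h(b)$.

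The plan is to use the amalgamation property (\cref{Thm : ICM has AP}) together with the fact that $\mathsf{ICM}$ is a variety with a very rigid structure. Since $\mathsf{ICM}$ is a variety with AP and its finitely subdirectly irreducible members are exactly the implicitly closed fields, which are simple (\cref{Prop : generation for RCR and ICM}), the standard route is to prove the \emph{strong} amalgamation property. For varieties, strong amalgamation is equivalent to AP plus surjectivity of epimorphisms, and it yields trivial dominions directly. Concretely, take two copies of $\B$ amalgamated over $\A$. A strong amalgam $\C$ with embeddings $g,h$ agreeing on $A$ and $g[B]\cap h[B]=g[A]$ separates $b$, because $g(b)=h(b)$ would force $b\in A$.

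To obtain strong amalgamation I will reduce to implicitly closed fields. It is classical (via the subdirect decomposition $\mathsf{ICM}=\III\SSS\PPP(\mathsf{ICF})$ and congruence-distributivity/extension properties, since meadows have ideals determined by idempotents $aa^*$ and thus congruences extend from subalgebras) that a variety with AP whose FSI members are simple and closed under subalgebras has trivial dominions as soon as, for every $\A\leq\B$ in $\mathsf{ICF}$ and $b\in B\smallsetminus A$, there are $\C\in\mathsf{ICF}$ and embeddings $g,h\colon\B\to\C$ agreeing on $A$ with $g(b)\neq h(b)$. The reduction goes as follows. Given $b\notin A$ in a general $\B\in\mathsf{ICM}$, I would first pass to a prime-ideal quotient $\B/I$ with $\B/I\in\mathsf{ICF}$ in which $b+I\notin (A+I)/I$. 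Finding such an $I$ needs care: I expect to use the idempotents $e=cc^*$ to glue local witnesses, since meadows are Boolean-like over their idempotents (a discriminator-type argument). After that, I would lift the separating pair back using $\B\to\B/I\to\C$.

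The core field-theoretic step is therefore the following. Let $\A\leq\B$ be implicitly closed fields. Then $A$ is a subfield of $B$ closed under $p$-th roots in characteristic $p$, hence perfect by \cref{Thm : conditions for perfect}. Given $b\notin A$, there are two cases.
\begin{itemize}
\item If $b$ is transcendental over $\A$, I would embed $\B$ into $\mathsf{acl}(\B(x))^+$ and use \cref{Prop : special field ext homs}\eqref{item : field ext hom trans} with \cref{Prop : extensing acl embeddings} to move $b$ to another transcendental element while fixing $A$.
\item If $b$ is algebraic, then since $\A$ is perfect, $\mu_b$ is separable of degree greater than $1$, so it has a root $c\neq b$ in $\mathsf{acl}(\B)$. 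By \cref{Prop : special field ext homs}\eqref{item : field ext hom alg}, there is an isomorphism $\A(b)\to\A(c)$ fixing $A$. I would extend it to an embedding $\B\to\mathsf{acl}(\B)$ via \cref{Prop : extensing acl embeddings}.
\end{itemize}
In both cases the ring maps become $\mathsf{ICM}$-homomorphisms by \cref{Cor : RCR sub are ICM sub}, since the targets are of the form $\mathsf{acl}(\cdot)^+\in\mathsf{ICF}$ (\cref{Prop : acl(A) in ICF}).

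The main obstacle I expect is the reduction from arbitrary $\A\leq\B\in\mathsf{ICM}$ to the field case, namely finding a prime ideal $I$ of $\B$ with $b+I\notin(A+I)/I$. The first approach I would try is the following. Assume every quotient fails, so that for each prime ideal $I$ there is $a_I\in A$ with $b-a_I\in I$. Using the idempotents $e_I=(b-a_I)(b-a_I)^*$ and compactness of the spectrum, I would patch finitely many $a_I$ into a single $a\in A$ with $b=a$. The patching element would be $a=\sum_i a_{I_i} f_i$, where the $f_i$ are orthogonal idempotents built in $A$ from the $1-e$'s. The delicate point is whether those idempotents actually lie in $A$ rather than only in $B$. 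If they do not, I would instead compare two prime ideals $I,J$, as the later dominion description suggests, and work with the diagonal image in $\mathsf{icf}_I\times\mathsf{icf}_J$.
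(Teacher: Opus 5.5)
\textbf{There is a genuine gap.} It lies in the reduction from arbitrary $\A \leq \B \in \mathsf{ICM}$ to implicitly closed fields, and the step you propose is not merely delicate but false. Your step ``pass to a prime ideal $I$ of $\B^-$ with $b+I \notin (A+I)/I$'' fails already in the following example. Take $\C \in \mathsf{ICF}$, $\B = \C \times \C$, $\A$ the diagonal, and $b = \langle 0,1\rangle \notin A$. The only prime ideals of $\B^-$ are $I = \{0\}\times C$ and $J = C \times \{0\}$. Since $A + I = A + J = B$, the element $b$ lies in the image of $A$ in every prime quotient.

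Your patching breaks exactly where you feared. The local witnesses are $a_I = \langle 0,0\rangle$ and $a_J = \langle 1,1\rangle$, and gluing them requires the idempotents $\langle 1,0\rangle, \langle 0,1\rangle$, which lie in $B \smallsetminus A$. In this example $b$ is separated only by the two projections, that is, by homomorphisms factoring through two \emph{different} prime quotients. So no argument of the form ``one quotient plus the field lemma'' can succeed. Invoking strong amalgamation is no shortcut either: for a variety with AP it is equivalent to the triviality of all dominions, which is the statement itself. Finally, the ``classical'' reduction you appeal to is stated without a precise formulation, and the concrete version you then give is the false one above.

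\textbf{How the paper closes it.} It cites \cref{Thm : our CP theorem}. $\mathsf{ICM}$ is congruence permutable (\cref{Rmk : dominions vs SES}) and has AP (\cref{Thm : ICM has AP}), so it suffices to prove $\mathsf{d}_{\mathsf{ICM}}(\A,\B) = A$ for $\A \leq \B \in \mathsf{ICM}_\textsc{fsi} = \mathsf{ICF}$. With that theorem in hand, your two-case field argument (perfectness, a distinct conjugate in the algebraic case, a new transcendental in the other) is essentially the paper's proof of \cref{Prop : main}.

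\textbf{Making your two-prime fallback work instead.} This requires steps you have not supplied:
\begin{enumerate}
\item Consider only primes $I, J$ of $\B^-$ lying over the same prime $P$ of $\A^-$.
\item If $b+I \notin (A+I)/I$, apply your field lemma.
\item If $b+I = a+I$ and $b+J = a'+J$ with $a - a' \notin P$, amalgamate the fields $\B/I$ and $\B/J$ over $\A/P$, then pass to the algebraic closure, to separate $b$.
\item Otherwise $b$ is congruent to a single $a_P \in A$ modulo every prime over $P$. Only now does patching work, using compactness of $\mathsf{Spec}(\A^-)$, closedness of the restriction map $\mathsf{Spec}(\B^-) \to \mathsf{Spec}(\A^-)$, and idempotents $aa^*$ with $a \in A$, which do lie in $A$.
\end{enumerate}

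\textbf{A smaller point in your field step.} \cref{Prop : extensing acl embeddings} only extends $\A(b) \to \A(c)$ (or $\A(b) \to \A(x)$) to $\mathsf{acl}(\A(b))$, not to all of $\B$. You would additionally need a transcendence basis of $\B$ over $\A(b)$. The paper avoids this: $\A^-(b)$ is regular, hence an amalgamation base for $\mathsf{RCR}$ (\cref{Prop : regular rings are Amalagamtion bases for RCR}). So by \cref{Prop : doms computable in subalgebras amalgamation base} it suffices to separate $b$ by maps defined only on $\A^-(b)$.
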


 The next concepts will be instrumental in proving \cref{Thm : ICM has SES}. We begin by reviewing the notion of a dominion due to Isbell (see \cite{Isb65}). 

\begin{Definition}
Given a class of similar algebras $\K$ and $\A \leq \B \in \K$, the \emph{dominion} of $\A$ in $\B$ relative to $\mathsf{K}$
is the set
\begin{align*}
\mathsf{d}_\mathsf{K}(\A, \B) = \{ & b \in B : g(b) = h(b) \text{ for every pair of homomorphisms} \\
&g, h \colon \B \to \C  \text{ with }\C \in \mathsf{K} \text{ such that }g{\upharpoonright}_A = h{\upharpoonright}_A\}.
\end{align*}
\end{Definition}

We will also rely on the following concept.

\begin{Definition}
A variety $\mathsf{K}$ is  \emph{congruence permutable} when $\theta \circ \phi = \phi \circ \theta$ for all $\A \in \K$ and $\theta, \phi \in \Con (\A)$, where $\circ$ is the operation of composition of relations. 
\end{Definition}

\begin{Remark}\label{Rmk : dominions vs SES}
We recall that every variety with a group reduct is 
congruence permutable (see, e.g., \cite[p.\ 79, Example.\ 1]{BuSa00}).

In particular, $\mathsf{ICM}$ is congruence permutable.
\qed
\end{Remark}

The proof of \cref{Thm : ICM has SES} is facilitated 
by the following result (see  \cite[Cor.\ 7.16]{CKMIMPv2}). 

\begin{Theorem}\label{Thm : our CP theorem}
Let $\K$ be a congruence permutable variety with the amalgamation property. Then monomorphisms are regular in $\K$ if and only if $\mathsf{d}_\K(\A, \B) = A$ for all $\A \leq \B \in \mathsf{K}_\textsc{fsi}$. 
\end{Theorem}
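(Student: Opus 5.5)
The forward implication is immediate. Suppose monomorphisms are regular in $\K$. By the characterization recalled in the introduction (valid for any quasivariety, hence for the variety $\K$), we get $\mathsf{d}_\K(\A, \B) = A$ for all $\A \leq \B \in \K$. In particular this holds when $\A, \B \in \K_\textsc{fsi}$.

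For the converse I would again use that characterization. It suffices to show $\mathsf{d}_\K(\A, \B) = A$ for arbitrary $\A \leq \B \in \K$. So fix $b \in B \smallsetminus A$. The goal is to produce $\C \in \K$ and homomorphisms $g, h \colon \B \to \C$ with $g{\upharpoonright}_A = h{\upharpoonright}_A$ and $g(b) \neq h(b)$. The idea is to push the problem down to a relatively finitely subdirectly irreducible quotient, apply the hypothesis there, and compose with the quotient map.

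\textbf{Step 1: a maximal congruence separating $b$ from $A$.} I would apply Zorn's Lemma to the set of congruences $\theta$ of $\B$ such that $\langle b, a \rangle \notin \theta$ for every $a \in A$. This set contains $\mathsf{id}_B$. It is closed under unions of chains, because membership of a single pair in a union of a chain of congruences is witnessed at some stage. So it has a maximal element $\theta$. Then $b/\theta \notin A/\theta$, where $A/\theta$ denotes the image of $A$ in $\B/\theta$.

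\textbf{Step 2: $\B/\theta$ is finitely subdirectly irreducible.} Suppose $\theta = \phi \cap \psi$ with $\phi, \psi \supsetneq \theta$. By maximality there are $a_1, a_2 \in A$ with $b \mathrel{\phi} a_1$ and $b \mathrel{\psi} a_2$. Then $\langle a_1, a_2 \rangle \in \phi \circ \psi = \psi \circ \phi$. The aim is to find some $a \in A$ with $b \mathrel{(\phi \cap \psi)} a$, which contradicts the choice of $\theta$. Congruence permutability alone only gives an intermediate element in $B$, not in $A$. To get an element of $A$, I would use the Chinese-remainder behaviour available in congruence permutable varieties: there is a Maltsev term $m$ with $m(x,y,y) = x$ and $m(x,x,y) = y$. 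One natural candidate is $m(a_1, a_2, a_2')$ with all arguments in $A$, together with a compatible pair in $A$. If that fails, I would instead restrict $\phi$ and $\psi$ to $A$ and argue inside $A$.

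\textbf{Step 3: the image of $A$ is finitely subdirectly irreducible, and conclusion.} Even with Step 2 in hand, $A/\theta$ need not lie in $\K_\textsc{fsi}$. This is where the amalgamation property enters. I would use it to obtain a congruence extension-type property: every congruence of the subalgebra $A/\theta$ is the restriction of a congruence of $\B/\theta$. Given that, I would redo the maximality argument simultaneously on $\A$ and $\B$, choosing $\theta$ so that both quotients are finitely subdirectly irreducible. Then the hypothesis applies to $A/\theta \leq \B/\theta$. It yields a pair of homomorphisms out of $\B/\theta$ that agree on $A/\theta$ and separate $b/\theta$. Precomposing both with the quotient map $\B \to \B/\theta$ gives the required $g$ and $h$.

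I expect the main obstacle to be the interplay in Steps 2--3: ensuring that the relevant quotient of $\A$ (not just of $\B$) is finitely subdirectly irreducible, which is exactly where the amalgamation property must be used.
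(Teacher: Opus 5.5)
The paper does not prove this theorem; it imports it from \cite[Cor.~7.16]{CKMIMPv2}, so I assess your proposal on its own terms. Your forward direction is fine. The converse, however, has a genuine gap, and it is not a local one. Your strategy is to find a single congruence $\theta$ of $\B$ with $\B/\theta \in \K_\textsc{fsi}$ and $b/\theta \notin A/\theta$, and then apply the hypothesis there. This cannot work in general.

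For a counterexample, take $\B = \mathbb{F}_2^+ \times \mathbb{F}_2^+ \in \mathsf{ICM}$, which is a congruence permutable variety with the amalgamation property (\cref{Rmk : dominions vs SES} and \cref{Thm : ICM has AP}). Let $\A$ be the diagonal and $b = \langle 1, 0 \rangle$.
\begin{itemize}
\item The congruences of $\B$ are $\mathsf{id}_B$, $B \times B$, and the kernels of the two projections. These kernels identify $b$ with $\langle 1,1 \rangle$ and $\langle 0,0 \rangle$, respectively.
\item Hence $\mathsf{id}_B$ is already maximal in your Step~1, but $\B$ is not finitely subdirectly irreducible. So Step~2 is false. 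In your notation, the element of $A$ you are looking for would have to be $b$ itself, so no Maltsev-term manipulation can produce it.
\item Worse, the only finitely subdirectly irreducible quotients of $\B$ are the two projections, and $A$ maps onto each of them. Yet $b \notin \mathsf{d}_{\mathsf{ICM}}(\A, \B)$, because the two projections agree on $A$ and differ at $b$.
\end{itemize}

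The idea you are missing is this: in a separating pair $g, h$, the two kernels have the same trace on $A$, but they are typically \emph{different}. So the pair has to be built from two distinct quotients $\B/\phi$ and $\B/\psi$. For instance, when $\phi \cap A^2 = \psi \cap A^2$, these quotients can be glued by the amalgamation property over the common copy of $\A/(\phi \cap A^2)$. Your Zorn reduction is a reasonable first step. But the non-finitely-subdirectly-irreducible case it leaves over is exactly the heart of the proof, and that is where amalgamation and permutability must do real work.

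Step~3 does not repair this, for two reasons. First, it rests on a misreading. In the paper's notation, $\A \leq \B \in \K_\textsc{fsi}$ only says that $\A$ is a subalgebra of some $\B \in \K_\textsc{fsi}$, so $\A$ need not be finitely subdirectly irreducible. If Step~2 were true, you would already be done. Second, the amalgamation property does not yield a congruence extension property, even in congruence permutable varieties. Groups have the amalgamation property, but the Klein four-group is normal in $\mathrm{Alt}(4) \leq \mathrm{Alt}(5)$. Since $\mathrm{Alt}(5)$ is simple, the Klein four-group is not the intersection of $\mathrm{Alt}(4)$ with any normal subgroup of $\mathrm{Alt}(5)$.
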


As $\mathsf{ICM}$ is a congruence permutable variety with the amalgamation property (see \cref{Rmk : dominions vs SES} and \cref{Thm : ICM has AP}), it falls within the scope of \cref{Thm : our CP theorem}. Keeping in mind that  $\mathsf{ICM}_\textsc{fsi} = \mathsf{ICF}$ (see \cref{Prop : generation for RCR and ICM}), this implies that the proof of  \cref{Thm : ICM has SES} reduces to verifying the following.

\begin{Proposition}\label{Prop : main}
    For all $\A \leq \B \in \mathsf{ICF}$ we have $\mathsf{d}_\mathsf{ICM}(\A, \B) = A$.
\end{Proposition}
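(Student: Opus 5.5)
Given $\A \leq \B \in \mathsf{ICF}$ and $b \in B \setminus A$, the plan is to produce an implicitly closed field $\C$ and two homomorphisms $g, h \colon \B \to \C$ that agree on $A$ but differ on $b$. By \cref{Cor : RCR sub are ICM sub}, ring homomorphisms between implicitly closed meadows automatically preserve $(\,)^*$ and all $r_p$. So it suffices to find field embeddings $g, h \colon \B^- \to \C^-$ with $\C^- = \mathsf{acl}(\D)$ for some field $\D$, and then pass to $\C = \mathsf{acl}(\D)^+ \in \mathsf{ICF}$ via \cref{Prop : acl(A) in ICF}. Note that $\A^-$ is a subfield of $\B^-$ (closure under $^*$ gives inverses), and it is weakly rooted because $A$ is closed under $r_p$ and $\A$ has the same characteristic as $\B$. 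Hence $\A^-$ is perfect by \cref{Thm : conditions for perfect}.

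The argument splits into two cases.

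\emph{Case 1: $b$ is transcendental over $\A^-$.} Take $\D = \B^-(x)$ with $x$ a new indeterminate. The element $b+x$ is transcendental over $\A^-$, and $\A^-(b) \cong \A^-(b+x)$ fixing $A$ by \cref{Prop : special field ext homs}\eqref{item : field ext hom trans}. Using algebraic closures, I would extend the identity and this isomorphism to embeddings $g, h \colon \B^- \to \mathsf{acl}(\D)$ with $g(b) = b \neq b+x = h(b)$. Extending $\A^-(b) \to \mathsf{acl}(\D)$ to all of $\B^-$ needs care: first pick a transcendence basis of $\B^-$ over $\A^-(b)$ and map it to algebraically independent elements of a large enough algebraically closed field, then extend through the algebraic part by \cref{Prop : extensing acl embeddings}. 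For this reason I would take $\D = \B^-(x_i : i \in I)$ with enough indeterminates, or simply $\mathsf{acl}$ of a large purely transcendental extension.

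\emph{Case 2: $b$ is algebraic over $\A^-$.} Since $\A^-$ is perfect, the minimal polynomial $\mu_b$ is separable. Because $b \notin A$, it has degree at least $2$, so it has a root $c \neq b$ in $\mathsf{acl}(\B^-)$. By \cref{Prop : special field ext homs}\eqref{item : field ext hom alg} there is an isomorphism $\A^-(b) \to \A^-(c)$ fixing $A$ and sending $b$ to $c$. I would extend it to an embedding $h \colon \B^- \to \mathsf{acl}(\D)$, with $\D$ a suitably large field containing $\B^-$, again by the transcendence-basis-plus-\cref{Prop : extensing acl embeddings} argument, and let $g$ be the inclusion. Then $g(b) = b \neq c = h(b)$.

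The main obstacle is this extension step: pushing an embedding of a subfield $\E \leq \B^-$ into an algebraically closed field up to all of $\B^-$ when $\B^-$ over $\E$ is not algebraic. The standard remedy is to choose $\C^-$ algebraically closed of transcendence degree at least that of $\B^-$ over the prime field (for example $\mathsf{acl}(\B^-(x_i : i \in I))$ with $|I| \geq |B|$). Then send a transcendence basis of $\B^-$ over $\E$ to elements algebraically independent over the image of $\E$, and finish with \cref{Prop : extensing acl embeddings}. Perfectness of $\A^-$ is used exactly in Case 2, to guarantee that $b$ has a distinct conjugate; this is why weak $p$-roots are needed.
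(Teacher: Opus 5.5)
Your proof is correct. Its skeleton matches the paper's: the same split into the algebraic and transcendental cases, the same use of perfectness of $\A^-$ to produce a second root of $\mu_b$, and the same final appeal to \cref{Cor : RCR sub are ICM sub}.

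You genuinely diverge at the step you flagged as the main obstacle, namely getting from $\A^-(b)$ up to $\B^-$. The paper never extends embeddings. It observes that the field $\A^-(b)$ is regular, hence an amalgamation base for $\mathsf{RCR}$ (\cref{Prop : regular rings are Amalagamtion bases for RCR}). By \cref{Prop : doms computable in subalgebras amalgamation base} it then gets $\mathsf{d}_{\mathsf{RCR}}(\A^-, \A^-(b)) = \mathsf{d}_{\mathsf{RCR}}(\A^-, \B^-) \cap A^-(b)$. So it only has to separate $b$ by two embeddings defined on $\A^-(b)$, into the small fields $\A^-(c,d)$ or $\A^-(x,y)$. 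The resulting separating pair on $\B^-$ lands in an arbitrary reduced ring, which is then absorbed via \cref{Cor : subreducts}.

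You instead prove the extension by hand, via a transcendence basis and \cref{Prop : extensing acl embeddings}. This makes the argument purely field-theoretic. It also produces two explicit embeddings of $\B^-$ into a single algebraically closed field, so the separating algebra is already in $\mathsf{ICF}$. You need neither \cref{Thm : regular rings have AP} nor \cref{Cor : subreducts}. The price is the transcendence-degree bookkeeping, which the paper's amalgamation argument hides.

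One small correction to that bookkeeping. The criterion in your last paragraph (transcendence degree of $\C^-$ over the prime field at least that of $\B^-$) is not sufficient in general. What the extension actually needs is a set of $\mathsf{trdeg}(\B^-/\A^-(b))$ elements of $\C^-$ that are algebraically independent over the image of $\A^-(b)$. Your concrete choice $\mathsf{acl}(\B^-(x_i : i \in I))$ with $|I| \geq |B|$ does provide this in both cases:
\begin{itemize}
\item In Case 1, take $x = x_{i_0}$. The remaining $x_i$ are independent over $\B^-(x) \supseteq \A^-(b+x)$, and $I$ is infinite there because $B$ is.
\item In Case 2, all the $x_i$ are independent over $\mathsf{acl}(\B^-) \supseteq \A^-(c)$.
\end{itemize}
So the argument goes through as written.
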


To this end, we will utilize the next observation (see \cite[Prop.\ 4.11]{CKMIMPv2}).

\begin{Proposition}
\label{Prop : doms computable in subalgebras amalgamation base}
   Let $\mathsf{K}$ be a class of similar algebras closed under finite direct products and $\A \leq \B \leq \C$ with $\B, \C \in \K$.  If $\B$ is an amalgamation base for $\mathsf{K}$, then 
   \[
   \mathsf{d}_{\mathsf{K}}(\A,\B) = \mathsf{d}_{\mathsf{K}}(\A,\C) \cap B.
   \]
\end{Proposition}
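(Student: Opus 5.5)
The inclusion $\mathsf{d}_\K(\A,\B) \subseteq \mathsf{d}_\K(\A,\C) \cap B$ is a restriction argument; the reverse inclusion is where the work lies, and the plan is to convert arbitrary homomorphisms on $\B$ into embeddings and then apply the amalgamation hypothesis on $\B$ twice.

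For the easy inclusion, let $b \in \mathsf{d}_\K(\A,\B)$. Then $b \in B$, so it remains to show $b \in \mathsf{d}_\K(\A,\C)$. Let $g,h \colon \C \to \D$ be homomorphisms with $\D \in \K$ and $g{\upharpoonright}_A = h{\upharpoonright}_A$. The restrictions $g{\upharpoonright}_B, h{\upharpoonright}_B \colon \B \to \D$ still agree on $A$. Hence $g(b) = h(b)$, as desired.

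For the reverse inclusion, let $b \in \mathsf{d}_\K(\A,\C) \cap B$, and let $g,h \colon \B \to \D$ be homomorphisms with $\D \in \K$ and $g{\upharpoonright}_A = h{\upharpoonright}_A$. We must show $g(b) = h(b)$. The obstacle is that amalgamation only concerns embeddings, whereas $g$ and $h$ are arbitrary homomorphisms. To fix this, I will use closure under finite direct products. Consider
\[
g' \colon \B \to \B \times \D,\quad x \mapsto \langle x, g(x)\rangle, \qquad h' \colon \B \to \B \times \D,\quad x \mapsto \langle x, h(x)\rangle .
\]
Both are embeddings, since their first coordinate is the identity, and $\B \times \D \in \K$. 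Moreover, $g'$ and $h'$ agree on $A$.

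The main step is to produce two homomorphisms out of $\C$ into a single member of $\K$ whose restrictions to $B$ factor through $g'$ and $h'$ along a common embedding. I will amalgamate twice over the base $\B$.

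\textbf{First amalgamation.} Amalgamate the span $\langle \B, \C, \B\times\D, \iota, g'\rangle$, where $\iota$ is the inclusion. This gives $\boldsymbol{E} \in \K$ and embeddings $e_1 \colon \C \to \boldsymbol{E}$ and $e_2 \colon \B\times\D \to \boldsymbol{E}$ with $e_1{\upharpoonright}_B = e_2 \circ g'$.

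\textbf{Second amalgamation.} The composite $e_2 \circ h' \colon \B \to \boldsymbol{E}$ is an embedding, so $\langle \B, \C, \boldsymbol{E}, \iota, e_2\circ h'\rangle$ is again a span in $\K$ with base $\B$. Amalgamating it gives $\boldsymbol{F} \in \K$ and embeddings $f_1 \colon \C \to \boldsymbol{F}$ and $f_2 \colon \boldsymbol{E} \to \boldsymbol{F}$ with $f_1{\upharpoonright}_B = f_2 \circ e_2 \circ h'$.

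Now compare the two homomorphisms $u = f_2 \circ e_1$ and $v = f_1$, both from $\C$ to $\boldsymbol{F}$. For $a \in A$ we have
\[
u(a) = f_2 e_2 \langle a, g(a)\rangle = f_2 e_2 \langle a, h(a)\rangle = v(a),
\]
since $g(a) = h(a)$. Because $b \in \mathsf{d}_\K(\A,\C)$, it follows that $u(b) = v(b)$, that is,
\[
f_2 e_2 \langle b, g(b)\rangle = f_2 e_2 \langle b, h(b)\rangle .
\]
Since $f_2 \circ e_2$ is injective, this yields $g(b) = h(b)$. Hence $b \in \mathsf{d}_\K(\A,\B)$, which completes the reverse inclusion.
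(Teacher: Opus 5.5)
Your proof is correct and complete. The paper gives no argument of its own for this proposition; it imports it from \cite[Prop.~4.11]{CKMIMPv2}. Your argument is therefore a self-contained justification, and its two key moves are the right ones.

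First, the graph maps $x \mapsto \langle x, g(x)\rangle$ and $x \mapsto \langle x, h(x)\rangle$ turn arbitrary homomorphisms into embeddings into $\B \times \D \in \K$. This is the only place where closure under finite direct products is used, and it avoids needing $\D$ itself to receive an embedding.

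Second, the second amalgamation is again taken over $\B$, via $e_2 \circ h'$, rather than over $\B \times \D$, which need not be an amalgamation base. Injectivity of $f_2 \circ e_2$ then converts $u(b) = v(b)$ into $g(b) = h(b)$.
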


We are now ready to prove \cref{Prop : main}. This will establish Theorems \ref{Thm : main} and \ref{Thm : ICM has SES} as well.

\begin{proof} 
Consider $\A \leq \B \in \mathsf{ICF}$. As the inclusion $A \subseteq \mathsf{d}_\mathsf{ICM}(\A,  \B)$ is an immediate consequence of the definition of a dominion, we detail only the proof of the reverse inclusion. To this end, we reason by contraposition. Consider $b \in B - A$. We will show that $b \notin \mathsf{d}_\mathsf{ICM}(\A,  \B)$. 

As $\A \leq \B \in \mathsf{ICF}$ by assumption and $\mathsf{ICF}$ is closed under subalgebras by \cref{Prop : ICF closed under S and Pu}, we obtain $\A, \B \in \mathsf{ICF}$. Together with $\A \leq \B$, this yields that 
\begin{equation}\label{Eq : A leq B is field extension in WRF}
    \A^- \leq \B^-\text{ is a field extension with }\A^-, \B^- \in \mathsf{WRF}.
\end{equation}
 We begin with the following observation.

\begin{Claim}\label{Claim : very last argument of the main result}
We have $b \notin \mathsf{d}_{\mathsf{RCR}}(\A^-,\B^-)$.
\end{Claim}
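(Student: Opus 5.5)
The plan is to exhibit two ring homomorphisms $g, h \colon \B^- \to \boldsymbol{\Omega}$ into a field $\boldsymbol{\Omega}$ (hence into a member of $\mathsf{RCR}$) that agree on $A$ but satisfy $g(b) \neq h(b)$. The map $g$ will be the inclusion of $\B^-$ into a large algebraically closed field $\boldsymbol{\Omega}$. The map $h$ will be an embedding $\B^- \to \boldsymbol{\Omega}$ that fixes $A$ pointwise and moves $b$.

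For $\boldsymbol{\Omega}$ I take $\mathsf{acl}(\B^-(X))$, where $X$ is a set of indeterminates with $|X| \geq |B| + \aleph_0$. This choice leaves enough room both to realize conjugates of $b$ and to extend embeddings past transcendental elements. By \eqref{Eq : A leq B is field extension in WRF}, $\A^- \leq \B^-$ is a field extension with $\A^- \in \mathsf{WRF}$. I split into two cases according to whether $b$ is algebraic over $\A^-$.

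\emph{Algebraic case.} Since $\A^-$ is weakly rooted, it is perfect by \cref{Thm : conditions for perfect}. Hence $\mu_b$ has $\mathsf{deg}(\mu_b)$ distinct roots in $\mathsf{acl}(\A^-) \leq \boldsymbol{\Omega}$. As $b \notin A$, we have $\mathsf{deg}(\mu_b) \geq 2$, so there is a root $c \neq b$ of $\mu_b$ in $\boldsymbol{\Omega}$. \cref{Prop : special field ext homs}\eqref{item : field ext hom alg} then gives an isomorphism $\A^-(b) \to \A^-(c)$ that fixes $A$ and sends $b \mapsto c$.

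\emph{Transcendental case.} Pick $x \in X$. Then $b$ and $x$ are both transcendental over $\A^-$. Applying \cref{Prop : special field ext homs}\eqref{item : field ext hom trans} twice yields an isomorphism $\A^-(b) \to \A^-(x)$ that fixes $A$ and sends $b \mapsto x \neq b$.

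In both cases it remains to extend the embedding $k \colon \A^-(b) \to \boldsymbol{\Omega}$ to an embedding $h \colon \B^- \to \boldsymbol{\Omega}$. I expect this to be the main technical step, since $\B^-$ need not be algebraic over $\A^-(b)$. I would proceed as follows.
\begin{enumerate}[(a)]
\item Choose a transcendence basis $T$ of $\B^-$ over $\A^-(b)$.
\item Choose elements $T' \subseteq X$ with $|T'| = |T|$ that are algebraically independent over $k[\A^-(b)]$. This is possible because $k[\A^-(b)]$ has transcendence degree at most $1$ over the prime field inside $\mathsf{acl}(\B^-)$, while $X$ is larger than $|B|$; at most finitely many elements of $X$ are lost.
\item Extend $k$ to $\A^-(b)(T) \cong k[\A^-(b)](T')$ by sending $T$ bijectively onto $T'$, using the universal property of rational function fields (iterating \cref{Prop : special field ext homs}\eqref{item : field ext hom trans} and \cref{Prop : embedding fields of fractions}).
\item Since $\B^-$ is algebraic over $\A^-(b)(T)$, the field $\B^-$ embeds into $\mathsf{acl}(\A^-(b)(T))$. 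By \cref{Prop : extensing acl embeddings}, the embedding from step (c) extends to $\mathsf{acl}(\A^-(b)(T)) \to \mathsf{acl}(\boldsymbol{\Omega}) = \boldsymbol{\Omega}$. Restricting to $\B^-$ gives $h$.
\end{enumerate}

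By construction $h{\upharpoonright}_A = g{\upharpoonright}_A$ and $h(b) \neq b = g(b)$. Since $\boldsymbol{\Omega} \in \mathsf{F} \subseteq \mathsf{RCR}$, it follows that $b \notin \mathsf{d}_{\mathsf{RCR}}(\A^-, \B^-)$.
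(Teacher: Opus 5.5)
Your proof is correct, but it takes a different route from the paper.

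\textbf{The paper's route.} The paper never builds homomorphisms out of all of $\B^-$. It notes that $\A^-(b)$ is a field, hence a regular ring, hence an amalgamation base for $\mathsf{RCR}$ (\cref{Prop : regular rings are Amalagamtion bases for RCR}, which rests on \cref{Thm : regular rings have AP}). It then applies \cref{Prop : doms computable in subalgebras amalgamation base} to get $\mathsf{d}_{\mathsf{RCR}}(\A^-, \A^-(b)) = \mathsf{d}_{\mathsf{RCR}}(\A^-, \B^-) \cap A^-(b)$. This reduces the claim to separating $b$ by two embeddings of the simple extension $\A^-(b)$ that agree on $A$. \cref{Prop : special field ext homs} supplies these directly, into $\A^-(c,d)$ or $\A^-(x,y)$. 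So the extension problem you flag as the main technical step never arises.

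\textbf{Your route.} You solve that extension problem by hand, using a transcendence basis, a purely transcendental extension of $k$, and \cref{Prop : extensing acl embeddings}. In effect you perform the needed amalgamation inside one large algebraically closed field instead of outsourcing it to the amalgamation theory of regular rings.

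\textbf{What each buys.} The paper's argument is shorter and reuses the machinery of the amalgamation section. Yours is self-contained field theory and avoids both the regular-ring amalgamation theorem and the dominion-restriction lemma. It also produces witnesses that are embeddings into a single algebraically closed field $\Omega$. Consequently $\Omega^+ \in \mathsf{ICF}$ could serve directly as $\E$ in the step after the claim, without appealing to \cref{Cor : subreducts}.

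\textbf{A justification to repair in step (b).} It is not true in general that $k[\A^-(b)]$ has transcendence degree at most $1$ over the prime field; $\A^-$ alone can have arbitrarily large transcendence degree. The correct reason is as follows.
\begin{itemize}
\item In the algebraic case, $k[\A^-(b)] = \A^-(c)$ lies in the algebraic closure of $\B^-$ inside $\Omega$, so $X$ itself is algebraically independent over it.
\item In the transcendental case, $k[\A^-(b)] = \A^-(x) \subseteq \B^-(x)$, so $X \setminus \{x\}$ is algebraically independent over it.
\end{itemize}
In both cases the available set has cardinality at least $|B| \geq |T|$, so at most one element of $X$ is lost.
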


\begin{proof}[Proof of the Claim]
Observe that  $\A^-(b)$ is regular because it is a field. Consequently, it is an amalgamation base for $\mathsf{RCR}$ by \cref{Prop : regular rings are Amalagamtion bases for RCR}. 
This allows us to apply \cref{Prop : doms computable in subalgebras amalgamation base}, obtaining 
$\mathsf{d}_{\mathsf{RCR}}(\A^-, \A^-(b)) = \mathsf{d}_{\mathsf{RCR}}(\A^-, \B^-) \cap A^-(b)$. 
Therefore, to establish the claim, 
it suffices to show that $b \notin \mathsf{d}_{\mathsf{RCR}}(\A^-, \A^-(b))$.

To this end, we consider the field extension $\A^- \leq \A^-(b)$ (see \eqref{Eq : A leq B is field extension in WRF} if necessary).   We  distinguish two cases depending on whether $b$ is algebraic or transcendental over $\A^-$. First, suppose that $b$ is algebraic over $\A^-$. 
Let $\mu_b$ be the minimal polynomial of $b$ over $\A^-$.
Since $b \notin A$ by assumption, we have $\mathsf{deg}(\mu_b) \geq 2$. 
  Recall that $\A^- \in \mathsf{WRF}$ by \eqref{Eq : A leq B is field extension in WRF}. Therefore, \cref{Thm : conditions for perfect} yields that $\A^-$ is perfect. Moreover, the field extension $\A^- \leq \A^-(b)$ is algebraic because $b$ is algebraic over $\A^-(b)$ by assumption (see, e.g., \cite[F5 p.\ 18]{FLField}).
    As $\A^-$ is perfect, this yields that $\A^- \leq \A^-(b)$ is separable. Since $\mathsf{deg}(\mu_b) \geq 2$, there exist a pair of distinct roots $c$ and $d$ of $\mu_b$ in $\mathsf{acl}(\A^-)$.
    By \cref{Prop : special field ext homs}\eqref{item : field ext hom alg} 
    there also exists a pair of embeddings $g, h \colon \A^-(b) \to \A^-(c, d)$ such that 
      \[
    g {\upharpoonright}_A = h {\upharpoonright}_A \, \, \text{ and } \, \, g(b) = c \neq d = h(b).
    \]
As $\A^-(c, d)$ is a field, the above display yields $b \notin \mathsf{d}_\mathsf{RCR}(\A^-, \A^-(b))$, as desired.

Next, we consider the case where $b$ is transcendental over $\A^-$. By \cref{Prop : special field ext homs}\eqref{item : field ext hom trans}
 there exists an isomorphism $g_1 \colon  \A^-(b)  \to \A^-(x)$ such that $g_1(b) = x$ and $g_1(a) = a$ for every $a \in A$. 
    By the same token there exists an isomorphism $h_1 \colon \A^-(b) \to \A^-(y)$ such that $h_1(b) = y$ and $h_1(a) = a$ for every $a \in A$.
 Observe that $\A^-[x], \A[y]^- \leq \A^-[x, y] \leq \A^-(x, y)$. Therefore, by \cref{Prop : embedding fields of fractions} there exists a pair of embeddings $g_2 \colon \A^-(x) \to \A^-(x, y)$ and $h_2 \colon \A^-(y) \to \A^-(x, y)$ such that $g_2(x) = x$, $h_2(y) = y$, and $g_2(a) = h_2(a) = a$ for every $a \in A$. Consequently, $g = g_2 \circ g_1$ and $h = h_2 \circ h_1$ form a pair of embeddings $g, h \colon  \A^-(b) \to \A^-(x,y)$ satisfying 
 \[
 g {\upharpoonright}_A = h {\upharpoonright}_A \, \, \text{ and } \, \, g(b) = x \neq y = h(b).
 \] 
 As $\A^-(x, y)$ is a field, the above display yields $b \notin \mathsf{d}_\mathsf{RCR}(\A^-, \A^-(b))$.
\end{proof}

    By \cref{Claim : very last argument of the main result} there exists a pair of homomorphisms $g,h \colon \B^- \to \D$ with $\D \in \mathsf{RCR}$ such that $g {\upharpoonright}_A = h {\upharpoonright}_A$ and $g(b) \neq h(b)$. In view of \cref{Cor : subreducts}\eqref{item : subreducts : 2}, we may assume that $\D = \boldsymbol{E}^-$ for some $\E \in \mathsf{ICM}$. Therefore, we can apply \cref{Cor : RCR sub are ICM sub}, obtaining that $g$ and $h$ can be viewed as  homomorphisms $g, h \colon \B \to \E$ of implicitly closed meadows. Since $g {\upharpoonright}_A = h {\upharpoonright}_A$ and $g(b) \neq h(b)$, we conclude that  $b \notin \mathsf{d}_{\mathsf{ICM}}(\A, \B)$.
    \end{proof}

 We have thus established that adding the implicit operations of “taking weak inverses” and “taking weak prime roots” to the reduced commutative rings produces the canonical completion of this class in which every monomorphism (and every epimorphism) becomes regular. This completion is equationally axiomatizable (see \cref{Thm : ICM axioms}) and has the amalgamation property (see \cref{Thm : ICM has AP}). 
We close this section by highlighting that it also forms a discriminator variety.

\begin{Remark}\label{Rem : discriminator}
A variety  is \emph{discriminator} when it is of the form $\VVV(\K)$ for some class  $\K$ with a term  $t(x, y, z)$ such that for all $\A \in \K$ and $a, b, c \in A$,
\begin{equation}\label{Eq : discriminator term}
    t^\A(a, b, c) = \begin{cases*}
                    c & if  $a = b$;  \\
                     a & otherwise
                 \end{cases*}
\end{equation}
(see, e.g., \cite[Sec.\ IV.9]{BuSa00}). The importance of discriminator varieties derives from the fact that they have many desirable properties such as a representation theorem in terms of Boolean products with subdirectly irreducible factors (see, e.g., \cite[Thm.\ IV.9.4]{BuSa00}) or the fact that they are \emph{arithmetical}, in the sense that they satisfy a general form of the Chinese Remainder Theorem (see, e.g., \cite[p.~35 \& Thm.~2.2.1]{KP01}).

    Recall that $\mathsf{ICM} = \III\SSS\PPP(\mathsf{ICF})$ by definition and that $\mathsf{ICM}$ is a variety by \cref{Thm : ICM axioms}. Therefore, $\mathsf{ICM} = \VVV(\mathsf{ICF})$. Furthermore, it is straightforward to check that the term
\[
t(x, y, z) = x(x-y)(x-y)^{*}+z(1-(x-y)(x-y)^{*})
\]
satisfies \eqref{Eq : discriminator term} for every $\A \in \mathsf{ICF}$ (see \cite[p.\ 300]{MR2793402}).\ Consequently, $\mathsf{ICM}$ is a discriminator variety.
\qed
    \end{Remark}

    \section{Dominions}

 We close this paper with a  description of dominions in every 
class $\K$ such that $\mathsf{F} \subseteq \K \subseteq \mathsf{RCR}$. 

\begin{Definition}
Let $\A \in \mathsf{RCR}$. The set of prime ideals of $\A$ will be denoted by $\mathsf{Spec}(\A)$.
\end{Definition}

Let $\A \in \mathsf{RCR}$ and $I \in \mathsf{Spec}(\A)$. We recall  that  $\mathsf{frac}(\A / I)$ is a field (see \cref{Prop : prime ideals and integral domains}) and that $\mathsf{acl}(\mathsf{frac}(\A / I))^+$ is an implicitly closed field by \cref{Prop : acl(A) in ICF}. 

\begin{Definition}
Let $\A \in \mathsf{RCR}$. Then
\benroman 
\item for every $I \in \mathsf{Spec}(\A)$ let 
\[
\mathsf{icf}_I(\A) = \mathsf{acl}(\mathsf{frac}(\A / I))^+;
\]
\item for all $I, J \in \mathsf{Spec}(\A)$ let
\[
\mathsf{icf}_{I \times J}(\A) = \mathsf{icf}_I(\A) \times \mathsf{icf}_J(\A).
\]
\eroman
\end{Definition}

Observe that for all $\A \in \mathsf{RCR}$ and $I \in \mathsf{Spec}(\A)$,
\[
\A / I \leq \mathsf{acl}(\mathsf{frac}(\A / I)) =   \mathsf{icf}_I(\A)^-.
\] 
Our aim is to prove the following.

\begin{Theorem}\label{Thm : dominions}
Let $\K$ be a 
class such that $\mathsf{F} \subseteq \K \subseteq \mathsf{RCR}$. Moreover, let $\A \leq \B \in \K$ and $b \in B$. Then $b \in \mathsf{d}_\K(\A, \B)$ if and only if for all $I, J \in \mathsf{Spec}(\B)$ we have
\[
\langle b + I, b + J \rangle \in \mathsf{Sg}^{\mathsf{icf}_{I \times J}(\B)} ( \{ \langle a + I, a + J \rangle : a \in A \}).
\]
\end{Theorem}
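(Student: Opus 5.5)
The plan is to prove both implications by reducing homomorphisms out of $\B$ into members of $\K$ to pairs of homomorphisms into implicitly closed fields, and then use that monomorphisms are regular in $\mathsf{ICM}$ (\cref{Thm : ICM has SES}).

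\emph{Right to left.} Assume the condition and let $g, h \colon \B \to \C$ with $\C \in \K \subseteq \mathsf{RCR}$ agree on $A$. Suppose $g(b) \neq h(b)$. Since $\mathsf{RCR} = \III\SSS\PPP(\mathsf{F})$, there is a homomorphism $\pi \colon \C \to \D$ with $\D$ a field and $\pi(g(b)) \neq \pi(h(b))$. Replacing $\D$ by its algebraic closure, we may assume $\D$ is algebraically closed, so $\D^+ \in \mathsf{ICF}$.

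Put $I = \ker(\pi \circ g)$ and $J = \ker(\pi \circ h)$. Both are prime ideals of $\B$, because the images are subrings of a field (\cref{Prop : prime ideals and integral domains}). The maps $\pi g$ and $\pi h$ factor through embeddings $\B/I \to \D$ and $\B/J \to \D$. By \cref{Prop : embedding fields of fractions} and \cref{Prop : extensing acl embeddings}, these extend to ring embeddings $\hat g \colon \mathsf{icf}_I(\B)^- \to \mathsf{acl}(\D) = \D$ and $\hat h \colon \mathsf{icf}_J(\B)^- \to \D$. By \cref{Cor : RCR sub are ICM sub}, they are homomorphisms of implicitly closed meadows into $\D^+$.

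The two maps $\hat g \circ \pi_1$ and $\hat h \circ \pi_2 \colon \mathsf{icf}_{I\times J}(\B) \to \D^+$ are $\mathsf{ICM}$-homomorphisms that agree on every $\langle a+I, a+J\rangle$ with $a \in A$. Their equalizer is therefore a subalgebra containing the generators, so it contains $\langle b+I, b+J\rangle$. This gives $\pi g(b) = \pi h(b)$, a contradiction.

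\emph{Left to right.} Assume $b \in \mathsf{d}_\K(\A,\B)$ and fix $I, J \in \mathsf{Spec}(\B)$. Let $\E = \mathsf{icf}_{I \times J}(\B)$ and $\boldsymbol{S} = \mathsf{Sg}^{\E}(\{\langle a+I,a+J\rangle : a \in A\})$. By \cref{Thm : ICM has SES}, $\mathsf{d}_{\mathsf{ICM}}(\boldsymbol{S}, \E) = S$. So it suffices to show $\langle b+I, b+J\rangle \in \mathsf{d}_{\mathsf{ICM}}(\boldsymbol{S}, \E)$.

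To that end, take $\mathsf{ICM}$-homomorphisms $u, v \colon \E \to \boldsymbol{G}$ agreeing on $S$. Since $\mathsf{ICM} = \III\SSS\PPP(\mathsf{ICF})$, we may compose with projections and assume $\boldsymbol{G} \in \mathsf{ICF}$, so $\boldsymbol{G}^-$ is a field and hence lies in $\K$. Let $q \colon \B \to \E^-$ be $c \mapsto \langle c+I, c+J\rangle$. Then $u^- \circ q$ and $v^- \circ q \colon \B \to \boldsymbol{G}^-$ are ring homomorphisms into a member of $\K$ that agree on $A$. By the dominion hypothesis they agree on $b$, that is, $u(q(b)) = v(q(b))$. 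This holds for every such pair $u, v$, so $q(b) \in \mathsf{d}_{\mathsf{ICM}}(\boldsymbol{S}, \E) = S$, as required.

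\emph{Main obstacle.} The delicate step is the extension in the right-to-left direction: the embedding of $\B/I$ must extend through $\mathsf{frac}$ and $\mathsf{acl}$ into the common target field. This works because the target can be taken algebraically closed, and because \cref{Cor : RCR sub are ICM sub} turns ring maps between $\mathsf{ICM}$-reducts into $\mathsf{ICM}$-maps. The rest reduces to \cref{Thm : ICM has SES} and the standard fact that dominions are detected by maps into relatively subdirectly irreducible algebras.
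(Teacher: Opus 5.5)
Your proof is correct and follows essentially the same route as the paper. For the left-to-right direction, which the paper argues by contraposition, you likewise use regularity of monomorphisms in $\mathsf{ICM}$, reduce to an implicitly closed field target via projections, and compose with $c \mapsto \langle c+I, c+J\rangle$. For the right-to-left direction, you reduce to a field target, take the kernels as $I$ and $J$, and lift to $\mathsf{icf}_I(\B)$ and $\mathsf{icf}_J(\B)$ exactly as the paper does in \cref{Prop : homs lift}. The only cosmetic difference is that you conclude by noting that the equalizer of $\hat g \circ \pi_1$ and $\hat h \circ \pi_2$ is a subalgebra, whereas the paper writes out an explicit term.
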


Before proving \cref{Thm : dominions}, we show that this result admits the following improvement in the case where $\mathsf{K} = \mathsf{F}$. Recall that $\mathsf{acl}(\A)^+$ is an implicitly closed field for every field $\A$ (see \cref{Prop : acl(A) in ICF}).

\begin{Corollary}\label{Cor : dominions in fields}
        Let $\A \leq \B \in \mathsf{F}$. Then 
        \[
        \mathsf{d}_{\mathsf{F}}(\A, \B) = B \cap \mathsf{Sg}^{\mathsf{acl}(\B)^+}(A).
        \]
    \end{Corollary}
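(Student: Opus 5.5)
The plan is to derive the corollary from \cref{Thm : dominions} applied with $\K = \mathsf{F}$. Since $\B$ is a field, the only prime ideal of $\B$ is $\{0\}$. Hence $\mathsf{Spec}(\B) = \{\{0\}\}$, and the condition of \cref{Thm : dominions} reduces to the single pair $I = J = \{0\}$.

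With this choice we have $\B/\{0\} \cong \B$ and $\mathsf{frac}(\B) = \B$, so $\mathsf{icf}_{\{0\}}(\B) = \mathsf{acl}(\B)^+$. Writing $\C = \mathsf{acl}(\B)^+$, \cref{Thm : dominions} then says that $b \in \mathsf{d}_\mathsf{F}(\A, \B)$ if and only if
\[
\langle b, b \rangle \in \mathsf{Sg}^{\C \times \C}(\{\langle a, a\rangle : a \in A\}).
\]

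The remaining step is to identify this subalgebra. The diagonal map $\delta \colon \C \to \C \times \C$ given by $c \mapsto \langle c, c\rangle$ is a homomorphism of implicitly closed meadows, since operations in a product act coordinatewise. Therefore $\delta[\C]$ is a subalgebra of $\C \times \C$. Since homomorphic images commute with generated subalgebras,
\[
\mathsf{Sg}^{\C\times\C}(\delta[A]) = \delta[\mathsf{Sg}^{\C}(A)].
\]
Because $\delta$ is injective, $\langle b, b\rangle$ lies in $\delta[\mathsf{Sg}^{\C}(A)]$ exactly when $b \in \mathsf{Sg}^{\C}(A)$. Combining this with the fact that $b$ ranges over $B$ gives $\mathsf{d}_\mathsf{F}(\A,\B) = B \cap \mathsf{Sg}^{\mathsf{acl}(\B)^+}(A)$.

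There is no real obstacle here. The only points needing care are the identification $\mathsf{icf}_{\{0\}}(\B) = \mathsf{acl}(\B)^+$ up to the canonical isomorphisms fixing $B$, and the fact that $\mathsf{F} \subseteq \K = \mathsf{F} \subseteq \mathsf{RCR}$, so that \cref{Thm : dominions} applies.
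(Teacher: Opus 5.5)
Your proposal is correct and follows essentially the same route as the paper: apply \cref{Thm : dominions} with $\K = \mathsf{F}$, use that $\{0\}$ is the only prime ideal of $\B$, identify $\mathsf{icf}_{\{0\}}(\B)$ with $\mathsf{acl}(\B)^+$ via an isomorphism fixing $B$, and reduce the diagonal condition to $b \in \mathsf{Sg}^{\mathsf{acl}(\B)^+}(A)$. Your diagonal-embedding argument $\mathsf{Sg}^{\C\times\C}(\delta[A]) = \delta[\mathsf{Sg}^{\C}(A)]$ spells out the last equivalence, which the paper simply asserts.
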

    
    \begin{proof}
We will use repeatedly the assumption that $\B$ is a field. First, observe that the map $ h \colon \B \to \mathsf{frac}(\B / \{ 0 \})$ defined as $h(b) = b / \{ 0 \}$ for every $b \in B$ is an isomorphism by the definitions of a quotient ring and a fraction field. Together with the definition of $\mathsf{icf}_{\{0 \}}(\B)$, this ensures the existence of an isomorphism $g \colon \mathsf{acl}(\B)^+ \to \mathsf{icf}_{\{0 \}}(\B)$ such that $g(b) = b / \{ 0 \}$ for every $b \in B$. Next, observe that the only prime ideal of $\B$ is $\{ 0 \}$. Therefore,  \cref{Thm : dominions} translates to the following: for every $b \in B$,
\[
b \in \mathsf{d}_\mathsf{F}(\A, \B) \Longleftrightarrow \langle b, b \rangle \in \mathsf{Sg}^{\mathsf{acl}(\B)^+ \times \mathsf{acl}(\B)^+} (\{ \langle a, a \rangle : a \in A \}).
\] 
As the right hand side of the above display is equivalent to $b \in \mathsf{Sg}^{\mathsf{acl}(\B)^+}(A)$, we are done.
    \end{proof}

The proof of \cref{Thm : dominions} hinges upon the next observation. 

\begin{Proposition} \label{Prop : homs lift}
Let $h \colon \A \to \B$ be a ring homomorphism with $\A \in \mathsf{RCR}$ and $\B \in \mathsf{F}$.
   Then there exist $I \in \mathsf{Spec}(\A)$ and a homomorphism $g \colon \mathsf{icf}_{I}(\A) \to \mathsf{acl}(\B)^+$ of implicitly closed meadows such that $g(a+I) = h(a)$ for every $a \in A$.
\end{Proposition}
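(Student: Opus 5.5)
Take $I = \ker h$, which is the natural choice. Since $h[\A]$ is a subring of the field $\B$, it is an integral domain. Hence $\A / I \cong h[\A]$ is an integral domain, and $I \in \mathsf{Spec}(\A)$ by \cref{Prop : prime ideals and integral domains}. The map $h$ therefore factors through an embedding $h_1 \colon \A / I \to \B$ with $h_1(a + I) = h(a)$.

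Next we extend $h_1$ in two steps. First, \cref{Prop : embedding fields of fractions} gives an embedding $h_2 \colon \mathsf{frac}(\A / I) \to \B$ extending $h_1$. Second, \cref{Prop : extensing acl embeddings} gives an embedding $h_3 \colon \mathsf{acl}(\mathsf{frac}(\A / I)) \to \mathsf{acl}(\B)$ extending $h_2$. Then $h_3(a + I) = h(a)$ for every $a \in A$, and $h_3$ is a ring homomorphism
\[
h_3 \colon \mathsf{icf}_I(\A)^- \to (\mathsf{acl}(\B)^+)^-.
\]

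It remains to upgrade $h_3$ to a homomorphism of implicitly closed meadows. Both $\mathsf{icf}_I(\A)$ and $\mathsf{acl}(\B)^+$ are implicitly closed fields by \cref{Prop : acl(A) in ICF}, so both belong to $\mathsf{ICM}$. \cref{Cor : RCR sub are ICM sub} then says that $h_3$ is also a homomorphism $g \colon \mathsf{icf}_I(\A) \to \mathsf{acl}(\B)^+$ of implicitly closed meadows. The identity $g(a + I) = h(a)$ holds for every $a \in A$.

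No step is a real obstacle, since the work is already done by \cref{Cor : RCR sub are ICM sub}. The one point to check is that the ring reduct of $\mathsf{acl}(\B)^+$ is exactly $\mathsf{acl}(\B)$, so that $h_3$ is a ring homomorphism between the relevant reducts. If a direct check were preferred, it is easy. Field embeddings preserve inverses and map $0$ to $0$, so they preserve $(\,)^*$. Both fields have the characteristic of $\B$, because $h$ identifies the prime subfields. They preserve $r_p$ because $p$-th roots are unique and map to $p$-th roots, and when the characteristic differs from $p$ both sides are $0$.
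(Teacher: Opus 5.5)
Your proposal is correct and follows the paper's proof step for step. You take $I=\ker h$, factor $h$ through $\A/I$, extend first to $\mathsf{frac}(\A/I)$ and then to the algebraic closures, and upgrade the resulting ring embedding using the corollary that ring homomorphisms between reducts of implicitly closed meadows are meadow homomorphisms. Your optional direct check that field embeddings preserve $(\,)^*$ and each $r_p$ is also sound, though the corollary already covers it.
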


\begin{proof}
Since $h \colon \A \to \B$ is a homomorphism, we have $h[\A] \leq \B$. As $\B$ is a field, this yields that $h[\A]$ is an integral domain. Consequently, the kernel $I$ of $h$ is a prime ideal of $\A$ (see \cref{Prop : prime ideals and integral domains}). Moreover, the map $i \colon \A / I \to \B$ defined for every $a \in A$ as $i(a + I) = h(a)$ is a ring embedding.
Applying \cref{Prop : embedding fields of fractions}, we obtain a ring embedding $j \colon \mathsf{frac}(\A/I) \to \B$ such that 
\begin{equation} \label{eq : h eq}
    j(a+I) = h(a) \text{ for every } a \in A.
\end{equation}
By \cref{Prop : extensing acl embeddings} the map $j$ extends to a ring embedding $g \colon \mathsf{acl}(\mathsf{frac}(\A/I)) \to \mathsf{acl}(\B)$. Since $\mathsf{acl}(\mathsf{frac}(\A/I))^+, \mathsf{acl}(\B)^+ \in \mathsf{ICM}$ by \cref{Prop : acl(A) in ICF}, from \cref{Cor : RCR sub are ICM sub} it follows that $g$ can be viewed as a homomorphism $g \colon \mathsf{acl}(\mathsf{frac}(\A/I))^+ \to \mathsf{acl}(\B)^+$ of implicitly closed meadows.
As $\mathsf{icf}_I(\A) = \mathsf{acl}(\mathsf{frac}(\A))^+$ by definition and $g$ extends $j$, the validity of \eqref{eq : h eq} concludes the proof.
\end{proof}

We are now ready to prove \cref{Thm : dominions}.

\begin{proof}
We begin by proving the implication from left to right. To this end, we reason by contraposition. Suppose that there exist $I, J \in \mathsf{Spec}(\B)$ such that $\langle b+I, b+J \rangle \notin C$, where 
\[
\C = \mathsf{Sg}^{\mathsf{icf}_{I \times J}(\B)} (\{\langle a+I, a+J \rangle : a \in A\}).
\]
 We need to show that $b \notin \mathsf{d}_\K(\A, \B)$. Notice that $\langle b+I, b+J \rangle \in \mathsf{icf}_{I \times J}(\B)$ and $\C \leq \mathsf{icf}_{I \times J}(\B) \in \mathsf{ICM}$. Since monomorphisms are regular in $\mathsf{ICM}$ by \cref{Thm : ICM has SES}, there exist $\D \in \mathsf{ICM}$ and a pair of homomorphisms $g,h \colon \mathsf{icf}_{I \times J}(\B) \to \D$ of implicitly closed meadows such that 
\begin{equation} \label{Eq : dom witnesses}
    g {\upharpoonright}_C = h {\upharpoonright}_C \quad \text{and} \quad g(\langle b+I, b+J \rangle) \neq h(\langle b+I, b+J \rangle).
\end{equation}

\begin{Claim} \label{Claim : C' in F}
    We may assume that $\D \in \mathsf{ICF}$.
\end{Claim}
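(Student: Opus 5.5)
We reduce to a single implicitly closed field by composing with a projection. Since $\D \in \mathsf{ICM} = \III\SSS\PPP(\mathsf{ICF})$, there is an embedding $k \colon \D \to \prod_{i \in X}\E_i$ with each $\E_i \in \mathsf{ICF}$. Let $\pi_i$ be the $i$-th projection and write $k_i = \pi_i \circ k \colon \D \to \E_i$.

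\emph{Step 1: locate a separating coordinate.} By \eqref{Eq : dom witnesses} we have $g(\langle b+I, b+J\rangle) \neq h(\langle b+I, b+J\rangle)$ in $\D$. Since $k$ is injective, the two images differ under $k$. Elements of a direct product are equal precisely when all their coordinates agree, so there is some $i \in X$ with
\[
k_i(g(\langle b+I, b+J\rangle)) \neq k_i(h(\langle b+I, b+J\rangle)).
\]

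\emph{Step 2: replace the witnesses.} Set $g' = k_i \circ g$ and $h' = k_i \circ h$. These are homomorphisms of implicitly closed meadows from $\mathsf{icf}_{I \times J}(\B)$ to $\E_i \in \mathsf{ICF}$. From $g{\upharpoonright}_C = h{\upharpoonright}_C$ we get $g'{\upharpoonright}_C = h'{\upharpoonright}_C$. By Step 1, $g'$ and $h'$ still separate $\langle b+I, b+J\rangle$. So the pair $g', h'$ together with the target $\E_i$ satisfies \eqref{Eq : dom witnesses}, and we may assume $\D \in \mathsf{ICF}$.

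This argument uses only the definition $\mathsf{ICM} = \III\SSS\PPP(\mathsf{ICF})$. Alternatively, it follows from the subdirect decomposition theorem together with $\mathsf{ICM}_\textsc{fsi} = \mathsf{ICF}$ (\cref{Prop : generation for RCR and ICM}).

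There is essentially no obstacle here. The only point to check is that composites of homomorphisms of implicitly closed meadows are again such homomorphisms, and that the projections $\pi_i$ are homomorphisms. Both facts are immediate.
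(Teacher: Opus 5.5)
Your proof is correct and matches the paper's argument essentially verbatim. Both embed $\D$ into a product of implicitly closed fields using $\mathsf{ICM} = \III\SSS\PPP(\mathsf{ICF})$, choose a coordinate where the images of $\langle b+I, b+J\rangle$ under $g$ and $h$ differ, and compose with that projection, which preserves agreement on $C$.
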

\begin{proof}[Proof of the Claim]
Recall that $\mathsf{ICM} = \III\SSS\PPP(\mathsf{ICF})$ by definition.\ 
Therefore, there exists an embedding $f \colon \D \to \prod_{k \in K} \D_k$, where $\D_k \in  \mathsf{ICF}$ for each $k \in K$. 
From \eqref{Eq : dom witnesses} and the assumption that  $f$  is an embedding it follows that there exists $j  \in K$ such that 
\[
(p_j \circ f \circ g){\upharpoonright}_C = (p_j \circ f \circ h){\upharpoonright}_C \, \, \text{ and }\, \, (p_j \circ f \circ g)(\langle b + I, b + J \rangle) \ne (p_j \circ f \circ h)(\langle b + I, b + J \rangle),
\]
where $p_j \colon \prod_{k \in K}\D_k \to \D_j$ is the projection onto the $j$-th coordinate.
Therefore, we may replace $\D$ by  $\D_j$ in the main proof. As $\D_j  \in \mathsf{ICF}$, we are done.
\end{proof}

In view of \cref{Claim : C' in F}, we have $\D^- \in \mathsf{F}$. As $\mathsf{F} \subseteq \K$ by assumption, we obtain $\D^- \in \K$. Moreover, observe that $g$ and $h$ can be viewed as ring homomorphisms $g, h \colon \mathsf{icf}_{I \times J}(\B)^- \to \D^-$. 
Lastly, let $f \colon \B \to  \mathsf{icf}_{I \times J}(\B)^-$ be the ring homomorphism defined as $f(a) = \langle a+I, a+J \rangle$ for every $a \in B$. Therefore, $g \circ f, h \circ f \colon \B \to \D^-$ are ring homomorphisms with $\D^- \in \K$. Since the definitions of $f$ and $C$ ensure that $f[A] \subseteq C$, the left hand side of \eqref{Eq : dom witnesses} yields $(g \circ f) {\upharpoonright}_A = (h \circ f) {\upharpoonright}_A$. On the other hand, the definition of $f$ and the right hand side of \eqref{Eq : dom witnesses} guarantee that
\[
g(f(b)) = g(\langle b + I, b + J \rangle) \ne h(\langle b + I, b + J \rangle) = h(f(b)).
\]
Hence, we conclude that $b \notin \mathsf{d}_\K(\A, \B)$, as desired.

Next, we prove the implication from right to left. Suppose, with a view to contradiction, that $\langle b + I, b + J \rangle \in \mathsf{Sg}^{\mathsf{icf}_{I \times J}(\B)} (\{\langle a+I, a+J \rangle : a \in A\})$ for all $I, J \in \mathsf{Spec}(\B)$ and that $b \notin \mathsf{d}_\K(\A, \B)$. The latter implies that there exists a pair of ring homomorphisms $g,h \colon \B \to \C$ with $\C \in \K$
such that 
\begin{equation} \label{Eq : g,h assumption main}
    g {\upharpoonright}_A = h {\upharpoonright}_A \, \, \text{ and }\, \,  g(b) \neq h(b).
\end{equation}

An argument analogous to the one detailed in the proof of \cref{Claim : C' in F} (with the only difference that  the role of the equality $\mathsf{ICM} = \III\SSS\PPP(\mathsf{ICF})$ in the proof  should now be played by $\mathsf{RCR} = \III\SSS\PPP(\mathsf{F})$)
shows that $\C$ may be chosen in $\mathsf{F}$.
Consequently, we can apply \cref{Prop : homs lift}, obtaining $I, J \in \mathsf{Spec}(\B)$ and a pair of homomorphisms of implicitly closed meadows $g^* \colon \mathsf{icf}_I(\B) \to \mathsf{acl}(\C)^+$ and $h^* \colon \mathsf{icf}_J(\B) \to \mathsf{acl}(\C)^+$ such that for every $c \in B$,
\begin{equation}\label{Eq : what the star does to a map}
    g^*(c + I) = g(c) \, \, \text{ and } \, \, h^*(c + J) = h(c).
\end{equation}

As $\langle b+I, b+J \rangle \in \mathsf{Sg}^{\mathsf{icf}_{I \times J}(\B)} (\{\langle a+I, a+J \rangle : a \in A\})$ by assumption, there exist $a_1, \dots, a_n \in A$ and a term $t(x_1, \dots, x_n)$ of $\mathsf{ICM}$ such that
    \begin{equation} \label{Eq : defining term}
    \langle b+I, b+J \rangle  = t^{\mathsf{icf}_{I \times J}(\B)}(\langle a_1+I, a_1+J \rangle, \dots, \langle a_n+I, a_n+J \rangle).
    \end{equation}
Recall that $\C \leq  \mathsf{acl}(\C)$ by the definition of an algebraic closure. Therefore, $g[B] \cup h[B] \subseteq C \subseteq \mathsf{acl}(C)^+$.
We will show that
\begin{equation}\label{Eq : yet another display in the last proof}
    g(b) = t^{\mathsf{acl}(\C)^+}(g(a_1), \dots, g(a_n)) \, \, \text{ and }\, \, h(b) = t^{\mathsf{acl}(\C)^+}(h(a_1), \dots, h(a_n)).
\end{equation}
By symmetry it suffices to prove the left hand side of the above display. To this end, observe that
\begin{align*}
    g(b) &= g^*(b + I) = g^*(t^{\mathsf{icf}_I(\B)}(a_1 + I, \dots, a_n + I))\\
    &=t^{\mathsf{acl}(\C)^+}(g^*(a_1 + I), \dots, g^*(a_n + I)) =  t^{\mathsf{acl}(\C)^+}(g(a_1), \dots, g(a_n)),
\end{align*}
where the first and the last equalities hold by the left hand side of \eqref{Eq : what the star does to a map}, the second by \eqref{Eq : defining term} and the definition of $\mathsf{icf}_{I \times J}(\B)$, and the third because $g^* \colon \mathsf{icf}_{I}(\B) \to \mathsf{acl}(\C)^+$ is a homomorphism of implicitly closed meadows and $t(x_1, \dots, x_n)$ a term of $\mathsf{ICM}$. This establishes \eqref{Eq : yet another display in the last proof}. Lastly, from \eqref{Eq : yet another display in the last proof}, the left hand side of 
\eqref{Eq : g,h assumption main}, and the fact that $a_1, \dots, a_n \in A$ it follows that
\[
g(b) = t^{\mathsf{acl}(\C)^+}(g(a_1), \dots, g(a_n))  = t^{\mathsf{acl}(\C)^+}(h(a_1), \dots, h(a_n)) = h(b),
\]
a contradiction with the right hand side of \eqref{Eq : g,h assumption main}.
    \end{proof}

\end{document}